\documentclass[10pt,reqno,twoside]{article}
\usepackage{mysty}

\usepackage{wrapfig}
\usepackage{mdwlist}

\usepackage{fullpage}
\flushbottom

\usepackage{patchcmd,pifont}

\makeatletter
\makeatother

\usepackage{enumitem}



\SetLabelAlign{parright}{\parbox[t]{\labelwidth}{\raggedleft{#1}}}
\setlist[description]{style=multiline,topsep=4pt,align=parright}

\setlength{\parindent}{12pt}


\makeatletter
\let\reftagform@=\tagform@
\def\tagform@#1{\maketag@@@{(\ignorespaces\textcolor{black}{#1}\unskip\@@italiccorr)}}
\newcommand{\iref}[1]{\textup{\reftagform@{\tcr{\ref{#1}}}}}
\makeatother

\begin{document}
\title{An SDE Perspective on Stochastic Inertial Gradient Dynamics with Time-Dependent Viscosity and Hessian-Driven Damping}
\author{Rodrigo Maulen-Soto \thanks{Normandie Universit\'e, ENSICAEN, UNICAEN, CNRS, GREYC, France. E-mail: rodrigo.maulen@ensicaen.fr} \and
Jalal Fadili\thanks{Normandie Universit\'e, ENSICAEN, UNICAEN, CNRS, GREYC, France. E-mail: Jalal.Fadili@ensicaen.fr} \and Hedy Attouch\thanks{IMAG, CNRS, Universit\'e Montpellier, France. E-mail: hedy.attouch@umontpellier.fr} \and Peter Ochs\thanks{Department of Mathematics and Computer Science, Saarland University, Germany, E-mail: ochs@math.uni-saarland.de}
}
\date{}
\maketitle

\begin{abstract}
Our approach is part of the close link between continuous dissipative dynamical systems and optimization algorithms. Motivated by solving stochastic convex optimization problems in real Hilbert spaces, we propose a class of stochastic inertial differential equations which are driven by the gradient of the objective function. This will provide a general mathematical framework for analyzing fast optimization algorithms with stochastic gradient input. Our goal is to develop convergence guarantees for second-order stochastic differential equations in time, incorporating a viscous time-dependent damping and a Hessian-driven damping. To develop this program, we rely on stochastic Lyapunov analysis. Assuming a square-integrability condition on the diffusion term times a function dependent on the viscous damping, and that the Hessian-driven damping is a positive constant, our first main result shows that almost surely, there is convergence of the objective values with a fast convergence rate in expectation. Besides, in the case where the Hessian-driven damping is zero, we get fast convergence of the values in expectation and in almost sure sense, and we also prove almost sure weak convergence of the trajectory. We provide a comprehensive complexity analysis by establishing several new convergence rates in expectation and in almost sure sense for the convex and strongly convex case. 
\end{abstract}

\begin{keywords}
Stochastic optimization, Inertial gradient system,  Convex optimization, Stochastic Differential Equation, Time-dependent viscosity, Convergence rate, Asymptotic behavior.
\end{keywords}

\begin{AMS}
37N40, 46N10, 49M99, 65B99, 65K05, 65K10, 90B50, 90C25, 60H10, 90C53, 60G12
\end{AMS}


\section{Introduction}\label{sec:intro}

\subsection{Problem Statement}\label{problem_statement}
In this paper, we consider the minimization problem
\begin{equation}\label{P}\tag{P}
    \min_{x\in \H} f(x),
\end{equation}

\noindent where $\H$ is a real Hilbert space and the objective function $f: \H \to \R$ satisfies the following standing assumptions:
\begin{align}\label{H0}
\begin{cases}
\text{$f$ is convex and continuously twice differentiable with $L$-Lipschitz continuous gradient}; \\
\calS \eqdef \argmin (f)\neq\emptyset. \tag{$\mathrm{H}_0$} 
\end{cases}
\end{align} 

\paragraph{First-order in time systems.}
To solve \eqref{P}, a fundamental dynamic is the gradient flow system:
\begin{equation}
\begin{cases}\label{gf}\tag{GF}
\begin{aligned}
\dot{x}(t)+\nabla f(x(t))&= 0,\quad t>t_0;\\
x(t_0)&=x_0.
\end{aligned}
\end{cases}
\end{equation}
The gradient system \eqref{gf} is a dissipative dynamical system, whose study dates back to Cauchy \cite{Cauchy} in finite dimensions. It is key in optimization, as it converts minimizing $f$ into analyzing the asymptotic behavior of the trajectories of \eqref{gf}. In fact, Brezis, Baillon, and Bruck showed in the 1970s that if $\argmin(f)$ in \eqref{P} is non-empty, then every solution trajectory of \eqref{gf} converges weakly to a point in $\argmin(f)$, with a convergence rate of $\mathcal{O}(t^{-1})$ (even $o(t^{-1})$) on the function values.

The Euler forward (or Euler-Maruyama) discretization of \eqref{gf}, with stepsizes $h_k>0$, yields the gradient descent scheme \begin{equation}\label{GD}\tag{GD} x_{k+1}=x_k-h_k \nabla f(x_k). \end{equation} Under assumption \eqref{H0} and for $(h_k){k \in \N}\subset ]0,2/L[$, we have $f(x_k)-\min f=\calO(1/k)$ (even $o(1/k)$) and the iterates $(x_k)_{k\in\N}$ converge weakly to a point in $\argmin(f)$. This rate can be improved under additional geometric conditions on $f$, such as error bounds or the Kurdyka-{\L}ojasiewicz property in the convex case \cite{BolteKLComplexity16}.

\paragraph{Second-order in time systems: Key role of inertia.}
Second-order inertial dynamical systems have been introduced to accelerate convergence compared to \eqref{gf}. An abundant literature has been devoted to the study of inertial dynamics \begin{equation}\tag{$\mathrm{IGS}_{\gamma}$}\label{AVD} \ddot{x}(t)+\gamma(t)\dot{x}(t)+\nabla f(x(t))=0,\quad t>t_0. \end{equation} Several authors (e.g., \cite{cabot, engler}) highlighted that an asymptotically vanishing viscosity coefficient $\gamma(t)$ is crucial for acceleration. Most of the literature focuses on the case $\gamma(t)=\frac{\alpha}{t}$, stemming from the seminal work \cite{su} which established an $\mathcal{O}(1/t^2)$ convergence rate on the values for $\alpha=3$, linking it to Nesterov's accelerated gradient method \cite{1983}. Further research shows that $\alpha\geq3$ is required for the $\mathcal{O}(1/t^2)$ rate \cite{11}, while $\alpha>3$ yields an improved $o(1/t^2)$ rate and weak convergence of the trajectory \cite{faster1k2, may}. Another remarkable case of \eqref{AVD} is the Heavy Ball with Friction (HBF) method, where $\gamma(t)$ is constant, as introduced by Polyak \cite{speedingup}. In the strongly convex setting, the trajectory converges exponentially with an optimal rate for a well-chosen constant $\gamma$ \cite{heavyb}.

\paragraph{Geometric Hessian-driven damping.}
Because of the inertial aspects, and the asymptotic vanishing viscous damping coefficient, \eqref{AVD} may exhibit many small oscillations which are not desirable from an optimization point of view. To remedy this, a powerful tool consists in introducing a geometric damping driven by the Hessian of $f$ into the dynamic. This yields the Inertial System with Explicit Hessian-driven Damping, first proposed in \cite{AABR} (\tcb{and further studied in \cite{APR2,ACFR,ABCR}}):
\begin{equation} \label{ISEHD}\tag{ISEHD}
\qquad \ddot{x}(t) + \gamma(t)\dot{x}(t) +  \beta (t) \nabla^2  f (x(t)) \dot{x} (t) + \nabla  f (x(t)) = 0,
\end{equation}
and the Inertial System with Implicit Hessian-driven Damping \cite{alecsa,MJ}:
\begin{equation}
\label{ISIHD}\tag{ISIHD}
\ddot{x}(t)+\gamma(t)\dot{x}(t)+\nabla f(x(t)+\beta(t)\dot{x}(t)) = 0 .
\end{equation}
The rationale behind the use of the term ``implicit'' in \eqref{ISIHD} comes from a Taylor expansion of the gradient term (as $t \to +\infty$ we expect $\dot{x}(t) \to 0$). Following the physical interpretation of these ODEs, we call the non-negative parameters $\gamma$ and $\beta$ as the viscous and geometric damping parameters, respectively. 

At first glance, the presence of the Hessian in \eqref{ISEHD} may seem to entail numerical difficulties. However, this is not the case as the term $\nabla^2  f (x(t)) \dot{x} (t)$ is nothing but the time derivative of $t \mapsto \nabla  f (x(t))$. This explains why the time discretization of this dynamic provides efficient first-order algorithms \cite{ACFR}. However, in our stochastic setting---the focus of this paper--- the proposed approach applies only to the implicit form of the Hessian-driven damping. In this context, we do not have direct access to $\nabla f$, and instead model errors with a continuous It\^o martingale $M(t)$. The explicit form would involve the derivative of $\nabla f(X(t)) + M(t)$, which is meaningless since non-constant martingales are almost surely non-differentiable. Hence, we will only consider \eqref{ISIHD}.

\subsection{Motivations}
In many practical situations, the gradient evaluation is subject to stochastic errors---either because each iteration is costly, requiring cheap random approximations, or due to other exogenous factors. The continuous-time approach through stochastic differential equations (SDE) is a powerful way to model these errors in a unified way, and stochastic algorithms can then be viewed as time- discretizations. In fact, several recent works have used the SDE perspective to model SGD-type algorithms motivated by various reasons; (see \eg \cite{optch,continuous,dif,schrodinger,valid,cycle,mertikopoulos_staudigl_2018,mio,sdemodel,mio2}). The continuous-time perspective offers a deep insight and unveils the key properties of the dynamic without being tied to a specific discretization.

In this context, a natural SDE is
\begin{equation}
    \label{CSGDintro}\tag{SGF}
    \begin{cases}
     dX(t)&=-\nabla f(X(t))+\sigma(t,X(t))dW(t), \quad t>t_0;\\
    X(t_0)&=X_0,
    \end{cases}
\end{equation}
defined over a complete filtered probability space $(\Omega,\calF, (\calF_t)_{t \geq t_0}, \mathbb{P})$, where the diffusion (volatility) term $\sigma: \R^+ \times \H \to \mathcal{L}_2(\K;\H)$ is a measurable function, $\H,\K$ are real separable Hilbert spaces, $W$ is a $\calF_t$-adapted $\K$-valued cylindrical Brownian motion, and the initial data $X_0$ is an $\calF_{t_0}-$measurable $\H$-valued random variable. We will elaborate more on these notations and concepts in Section~\ref{sec:notation}. \eqref{CSGDintro} is a stochastic counterpart of \eqref{gf} where the error is modeled as a stochastic integral with respect to the measure defined by a continuous It\^{o} martingale. 

\paragraph{SDE modeling of SGD.}\label{sec:sdesgd}
To simplify the discussion in this section, we restrict our attention to the finite-dimensional case ($\H=\R^d$, $\K=\R^m$). In various applications, specially in machine learning, the Stochastic Gradient Descent (SGD) is a powerful alternative to gradient descent, and consists in replacing the full gradient computation by a cheaper random version, serving as an unbiased estimator. The SGD updates the iterates according to
\begin{equation}\tag{SGD}\label{sgdd}
x_{k+1}=x_k-h(\nabla f(x_k) + e_k),
\end{equation}
where $h>0$ is the stepsize and $e_k$ is the random noise term on the gradient at the $k$-th iteration. As such, \eqref{sgdd} can be viewed as instance of the Robbins-Monro stochastic approximation algorithm \cite{rob}. When the objective takes the form $f(x) = \EE[\hat{f}(x,\xi)]$, where the expectation is w.r.t. to the random variable $\xi$ the single-batch version of SGD reads
\begin{equation}\label{sgdmini}\tag{$\mathrm{SGD_{SB}}$}
x_{k+1}=x_k-h\nabla \hat{f}(x_k,\xi_k),
\end{equation}
where $\seq{\xi_k}$ are i.i.d. random variables with the same distribution as $\xi$. Of course, \eqref{sgdmini} is an instance of \eqref{sgdd} with $e_k = \nabla \hat{f}(x_k,\xi_k) - \nabla f(x_k)$.

A natural continuous-time model for these stochastic algorithms is \eqref{CSGDintro}. In particular, when the noise $e_k$ in \eqref{sgdd} is Gaussian (i.e., $e_k\sim\mathcal{N}(0,\sigma_k I_d)$), it has been shown that \eqref{CSGDintro} accurately approximates \eqref{sgdd} \cite[Proposition~2.1]{sdemodel}. Similar approximations hold for \eqref{sgdmini} under appropriate conditions, see \eg \cite{Mandt16,optch,continuous,dif,schrodinger,valid,cycle,Xie21,latz,hessianaware}.

Overall, the continuous-time SDE framework provides a powerful tool for analyzing SGD-type algorithms. By modeling the dynamics with \eqref{CSGDintro}, one can exploit the rich theoretical foundations of SDEs, Itô calculus, and measure theory to uncover the essential properties of an algorithm. This perspective not only captures the underlying behavior of the dynamics in a way that is independent of a particular discretization, but it also enables the transfer of convergence results from the continuous model directly to the discrete setting. In essence, many stability and convergence properties that are proven for \eqref{CSGDintro} can be used to predict and explain the behavior of the corresponding discrete algorithms, including their ability to escape saddle points in non-convex scenarios. For a more detailed discussion of these aspects, we refer the reader to \cite{mio3}.

\paragraph{Second-order SDE modeling of inertial SGD.}
Using a lifting argument to get an equivalent first-order reformulation, a natural reformulation of \eqref{ISIHD} yields the differential equation
\begin{equation}
\begin{cases}\label{ISIHDN}\tag{$\mathrm{ISIHD_{R}}$}
\begin{aligned}
\dot{x}(t)&=v(t), \quad t>t_0; \\
\dot{v}(t)&= -[\gamma(t)v(t)+\nabla  f(x(t)+\beta(t)v(t))], \quad t>t_0;\\
x(t_0)&=x_0,\quad \dot{x}(t_0)=v_0.
\end{aligned}
\end{cases}
\end{equation}
In this setting, we can model the associated errors using a stochastic integral with respect to the measure defined by a continuous It\^o martingale. This entails the following stochastic differential equation (SDE for short), which is the stochastic counterpart of \eqref{ISIHDN}:
\begin{equation}\label{ISIHD-S}\tag{$\mathrm{S-ISIHD}$}
\begin{cases}
d{X(t)}&={V(t)}dt,  \nonumber\\
d{V(t)}&= -\gamma(t){V(t)}dt-\nabla f({X(t)}+\beta(t){V(t)})dt+\sigma(t,{X(t)}+\beta(t){V(t)})dW(t),\\
{X(t_0)}&={X_0}, \quad {V(t_0)}={V_0}. \nonumber
\end{cases}
\end{equation}


Extending what we have discussed above for \eqref{CSGDintro} as a good model of \eqref{sgdd}, it can be shown that \eqref{ISIHD-S} is a provably good model of the stochastic inertial algorithm 
\begin{equation}\label{stochnest}
    \begin{cases}
        X_{k+1}&=X_k+hV_k,\\
        V_{k+1}&=(1-\gamma_k h)V_k-h\nabla f(X_k+\beta_k V_k) + \sqrt{h} \sigma_k G_k,
    \end{cases}
\end{equation}
where $G_k \sim \mathcal{N}(0,I_d)$, $\gamma_k = \gamma(t_k)$, $\sigma_k = \sigma(t_k,X_k+\beta_k V_k)$ and $\beta_k = \beta(t_k)$. More precisely, this algorithm is obtained by simple Euler-Maruyama discretization of \eqref{ISIHD-S}. It has been shown in \cite[Appendix~A]{mio3} that \eqref{stochnest} is a consistent discretization of \eqref{ISIHD-S} with an approximation error that vanishes at the rate $\mathcal{O}(h)$. This is much better than the approximation rate for \eqref{ISIHD} which is only $\mathcal{O}(\sqrt{h})$. As a consequence, this justifies that the continuous-time dynamics \eqref{ISIHD-S} is a better proxy of the stochastic inertial algorithm \eqref{stochnest} and opens the door to new insights in the behavior of such algorithms. In turn, the convergence properties of such algorithms can be easily derived from those of \eqref{ISIHD-S} with minimal effort. For instance, when $f$ is smooth with Lipschitz-continuous gradient, it is easy to see from the descent lemma that
\[
\EE[f(X_k) - \min f] = \EE[f(X(kh)) - \min f] + \calO(h),
\]
where $X_k$ are the iterates of \eqref{stochnest} and $X(t)$ the solution to \eqref{ISIHD-S}. This means that any rate proved on $\EE[f(X(t))-\min f]$ can be directly transferred to $\EE[f(X_k)-\min f]$.

\subsection{Objectives and Contributions}
In this work,  our goal is to provide a general mathematical framework for analyzing fast gradient-based optimization algorithms with stochastic gradient input.
For this, we will study second-order stochastic differential equations in time featuring inertia that is crucial for acceleration, and whose drift term is the gradient of the function to be minimized. In this context, considering inertial dynamics combining a time-dependent viscosity coefficient and geometric damping is a key property to obtain fast convergent methods while reducing oscillations.

More precisely, we study the stochastic dynamics \eqref{ISIHD-S} and its long-time behavior in order to solve \eqref{P}. We conduct a new analysis using specific and careful arguments that are much more involved than in the deterministic case. To get some intuition, keeping the discussion informal at this stage, let us first identify the assumptions needed to expect that the position state of \eqref{ISIHD-S} ``approaches'' $\argmin (f)$ in the long run. In the case where $\H=\K$, $\gamma(\cdot)\equiv\gamma>0,\beta\equiv 0,$ and $\sigma=\tilde{\sigma} I_{\H}$, where $\tilde{\sigma}$ is a positive real constant. Under mild assumptions one can show that \eqref{ISIHD-S} has a unique invariant distribution $\pi_{\tilde{\sigma}}$ in $(x,v)$ with density proportional to $ \exp\pa{-\frac{2\gamma}{\tilde{\sigma}^2}\left(f(x)+\frac{\Vert v\Vert^2}{2}\right)}$, see e.g., \cite[Proposition~6.1]{pavliotis}. 
Clearly, as $\tilde{\sigma} \to 0^+$, $\pi_{\tilde{\sigma}}$ gets concentrated around $\argmin (f)\times \{0_{\H}\}$, with $\lim_{\tilde{\sigma} \to 0^+} \pi_{\tilde{\sigma}}(\argmin (f) \times  \{0_{\H}\}) = 1$; see also Section~\ref{subsec:relwork} for further discussion. Motivated by these observations and the fact that we aim to exactly solve \eqref{P}, our paper will then mainly focus on the case where $\sigma(\cdot,x)$ vanishes fast enough as $t \to +\infty$ uniformly in $x$.

Our main contributions are summarized as follows:
\begin{itemize}
\item We will develop a Lyapunov analysis to obtain convergence rates and integral estimates, in expectation and almost sure sense, in the general case of coefficients $\gamma(t)$ and $\beta(t)$.

\item We will study two instances where the rates can be made even more explicit highlighting acceleration phenomena: when $\gamma(t)=\frac{\alpha}{t},\beta(t)=\beta_0+\frac{\gamma_0}{t}$, and when $\gamma$ is decreasing and vanishing with vanishing derivative, and $\beta(t)$ is constant.

\item In the case where the coefficient $\beta(t)$ is zero, we show that under some hypotheses, we have almost sure weak convergence of the trajectory, convergence rates, and integral estimates. As a special case, we focus on viscous damping coefficient $\gamma(t)=\frac{\alpha}{t^r}, r\in [0,1], \alpha\geq 1-r$.

\end{itemize}

\subsection{Relation to prior work}\label{subsec:relwork}
\paragraph{Kinetic diffusion dynamics for sampling} Let us consider \eqref{ISIHD-S} in the case where $\H=\K=\R^d$, $\gamma(t)=\gamma>0, \beta(t)=0$ and $\sigma=\sqrt{2\gamma}I_d$. Then one recovers the kinetic Langevin diffusion (or second-order Langevin process). In this case, the continuous-time Markov process $(X(t),V(t))$ is positive recurrent and has a unique invariant distribution which has the density proportional to $ \exp\left(-f(x)-\frac{\|v\|^2}{2}\right)$ with respect to the Lebesgue measure on $\R^{2d}$. Time-discretized versions of this Langevin diffusion process have been studied in the literature to (approximately) sample from a distribution proportional to $ \exp(-f(x))$ with asymptotic and non-asymptotic convergence guarantees in various topologies and under various conditions have been studied; see \cite{underdamped,isthere,Dalalyan2019BoundingTE} and references therein.

\paragraph{Inexact inertial gradient systems} There is an abundant literature regarding the dynamics \eqref{ISIHD} and \eqref{ISEHD}, either in the exact case or with errors but only deterministic ones; see \cite{alecsa, hessianpert,27,35,8,11,20,34,37,19,9,10}). We are not aware of any such work in the stochastic case. On a technical level, our Lyapunov analysis will be inspired by that in \cite{hessianpert} and \cite{AC1}. Only a few papers have been devoted to studying the second-order in-time inertial stochastic gradient systems with viscous damping, \ie stochastic versions of \eqref{AVD}, either with vanishing damping $\gamma(t)=\alpha/t$ or constant damping $\gamma(t)$ (stochastic HBF); see \eg \cite{sdemodel, rolememory, gadat1}. For instance, \cite{sdemodel} provide asymptotic $\mathcal{O}(1/t^2)$ convergence rate on the objective values in expectation under integrability conditions on the diffusion term as well as other rates under additional geometrical properties of of the objective.
The corresponding stochastic algorithms for these two choices of $\gamma$, whose mathematical formulation and analysis is simpler, have been the subject of active research work; see \eg \cite{Lin2015,Frostig2015,Jain2018,AR,AZ,Yang2016,Gadat2018,Loizou2020,Laborde2020,LanBook,Sebbouh2021,Driggs22,Wu24,AFKstochastic24}.

\paragraph{Time scaling and averaging} A first-order SDE to solve \eqref{P} has been thoroughly studied in \cite{mio}; see also \cite{mio2} for the non-smooth setting. This SDE has the form
\begin{equation}\label{mmm}
\begin{cases}
\begin{aligned}
    dX(t)&= -\nabla f(X(t))dt+\sigma(t,X(t))dW(t), \quad t\geq t_0,\\
    X(t_0)&=X_0.
    \end{aligned}
    \end{cases}
\end{equation}
Our work here is a natural extension of \cite{mio,mio2} to second-order systems hence allowing for accelerated rates. By leveraging the time scaling and averaging trick pioneered in \cite{fast} to go from \eqref{gf} to \eqref{ISIHD}, we were able in \cite{mio3} to transfer the results in \cite{mio} for \eqref{mmm} to \eqref{ISIHD-S}. The approach in \cite{mio3} avoids, in particular, going through an intricate Lyapunov analysis for \eqref{ISIHD-S}. However, this can only be done for a specific choice of $\beta$ (related to the viscous damping function $\gamma$). On the other hand, getting full advantage of the Hessian-driven damping term requires a careful choice of $\beta$, that is possibly independent of $\gamma$. Thus handling flexible and general choices of $\beta$ $\gamma$ makes necessary to go through a dedicated Lyapunov analysis for \eqref{ISIHD-S}.

\section{Notation and Preliminaries}\label{sec:notation}

We will use the following shorthand notations:  Given $n\in\N$,  $[n]\eqdef \{1,\ldots,n\}$. {Consider $\H,\K$ real separable Hilbert spaces endowed with the inner product $\langle\cdot,\cdot\rangle_{\H}$ and $\langle\cdot,\cdot\rangle_{\K}$, respectively, and norm $\Vert \cdot\Vert_{\H}=\sqrt{\langle \cdot,\cdot \rangle_{\H}}$ and $\Vert \cdot\Vert_{\K}=\sqrt{\langle \cdot,\cdot \rangle_{\K}}$, respectively (we omit the subscripts $\H$ and $\K$ for the sake of clarity). $I_{\H}$ is the identity operator on $\H$. $\calL(\K;\H)$ is the space of bounded linear operators from $\K$ to $\H$, $\calL_1(\K)$ is the space of trace-class operators, and $\calL_2(\K;\H)$ is the space of bounded linear Hilbert-Schmidt operators from $\K$ to $\H$}. For $M\in\calL_1(\K)$, is trace is defined by
\[
\tr(M)\eqdef \sum_{i\in I} \langle Me_i,e_i\rangle<+\infty,
\]
where $I\subseteq \N$ and $(e_i)_{i\in I}$ is an orthonormal basis of $\K$. Besides, for $M\in\calL(\K;\H)$, $M^{\star}\in\calL(\H;\K)$ is the adjoint operator of $M$, and for $M\in\calL_2(\K;\H)$, 
\[
\norm{M}_{\mathrm{HS}}\eqdef \sqrt{\tr(MM^{\star})}<+\infty
\] 
is its Hilbert-Schmidt norm (in the finite-dimensional case is equivalent to the Frobenius norm). We denote by $\wlim$ the limit for the weak topology of $\H$. The notation $A: \H\rightrightarrows \H$ means that $A$ is a set-valued operator from $\H$ to $\H$. Consider $f:\H\rightarrow\R$, the sublevel of $f$ at height $r\in\R$ is denoted $[f\leq r]\eqdef \{x\in{\H}: f(x)\leq r\}$. For $1 \leq p \leq +\infty$, $\Lp^p([a,b])$ is the space of measurable functions $g:\R\rightarrow\R$ such that $\int_a^b|g(t)|^p dt<+\infty$, with the usual adaptation when $p = +\infty$. For two functions $f,g:\R\rightarrow\R$ we will denote $f\sim g$ as $t\rightarrow+\infty$, if $\lim_{t\rightarrow+\infty}\frac{f(t)}{g(t)}=1$. 
On the probability space $(\Omega,\calF,\PP)$, $\Lp^p(\Omega;\H)$ denotes the (Bochner) space of $\H$-valued random variables whose $p$-th moment (with respect to the measure $\PP$) is finite. Other notations will be explained when they first appear.\smallskip

Let us recall some important definitions and results from convex analysis; for a comprehensive coverage, we refer the reader to \cite{rocka}.

\smallskip

We denote by $C^{s}(\H)$ the class of $s$-times continuously differentiable functions on $\H$. 
For $L \geq 0$, $C_L^{1,1}(\H) \subset C^{1}(\H)$ is the set of functions on $\H$ whose gradient is $L$-Lipschitz continuous, and $C_L^2(\H)$ is the subset of $C_L^{1,1}(\H)$ whose functions are twice differentiable.\\

The class of $C_L^{1,1}(\H)$ functions enjoys the well-known \textit{descent lemma} which plays a central role in the analysis of optimization dynamics.
\begin{lemma}\label{descent}
Let $f\in C_L^{1,1}(\H)$, then $$f(y)\leq f(x)+\langle \nabla f(x), y-x\rangle+\frac{L}{2}
\norm{y-x}^2,\quad \forall x,y\in\H.$$
\end{lemma}
\begin{corollary}\label{2L}
Let $f\in C_L^{1,1}(\H)$ such that $\argmin f\neq\emptyset$, then 
\begin{equation*}
\norm{\nabla f(x)}^2\leq 2L(f(x)-\min f),\quad \forall x\in \H.\end{equation*}
\end{corollary}

\paragraph*{On stochastic differential equations}
For the necessary notation and preliminaries on stochastic processes, see \cite[Section A.2]{mio2}. Moreover, the existence and uniqueness of a solution of \eqref{ISIHD-S} is discussed in Proposition \ref{existencecor}.\smallskip 

 Let us now present It\^o's formula which plays a central role in the theory of stochastic differential equations:

   \begin{proposition}\label{itos} 
   Consider $(X,V)$ a solution of \eqref{ISIHD-S} and $W$ a $\K-$valued cylindrical Brownian motion, let $\phi: [t_0,+\infty[\times\H\times\H\rightarrow\mathbb{R}$ be such that $\phi(\cdot,x,v)\in C^1([t_0,+\infty[)$ for every $x,v\in\H$, $\phi(t,\cdot,\cdot)\in C^2(\H\times \H)$ for every $t\geq t_0$. Then the process $$Y(t)=\phi(t,X(t),V(t)),$$ is an Itô Process, such that for all $t\geq t_0$
\begin{equation}
\begin{split}
    Y(t)&=Y(t_0)+\int_{t_0}^{t} \frac{\partial \phi}{\partial t}(s,X(s),V(s)) ds
    +\int_{t_0}^{t} \langle  \nabla_x\phi(s,X(s),V(s)),V(s) \rangle ds\\
    &-\int_{t_0}^{t} \langle  \nabla_v\phi(s,X(s),V(s)),\gamma(s)V(s)+\nabla f(X(s)+\beta(s)V(s)) \rangle ds\\
    &+\int_{t_0}^{t}\langle \sigma^{\star}(s,X(s)+\beta(s)V(s))\nabla_v \phi(s,X(s),V(s)) ,dW(s)\rangle\\
    &+\frac{1}{2}\int_{t_0}^{t} \tr[\sigma(s,X(s)+\beta(s)V(s))\sigma^{\star}(s,X(s)+\beta(s)V(s))\nabla_v^2\phi(s,X(s),V(s))]ds,
    \end{split}
\end{equation}
where $\nabla_v^2$ is the Hessian with respect to the double differentiation of $v$ and $\sigma^{\star}$ is the adjoint operator of $\sigma$. Moreover, if for all $T>t_0$ $$\mathbb{E}\left(\int_{t_0}^T \Vert \sigma^{\star}(s,X(s)+\beta(s)V(s))\nabla_v \phi(s,X(s),V(s))\Vert^2 ds\right)<+\infty,$$
then  $\int_{t_0}^{t}\langle \sigma^{\star}(s,X(s)+\beta(s)V(s))\nabla_v \phi(s,X(s),V(s)) ,dW(s)\rangle$ is a square-integrable continuous martingale and
 
\begin{equation}
\mathbb{E}\left(\int_{t_0}^{t}\langle \sigma^{\star}(s,X(s)+\beta(s)V(s))\nabla_v \phi(s,X(s),V(s)) ,dW(s)\rangle\right)=0
\end{equation}
\end{proposition}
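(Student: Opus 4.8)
The plan is to obtain the stated identity as a direct specialization of the general It\^o formula for Hilbert-space-valued SDEs driven by a cylindrical Brownian motion (the preliminaries collected in \cite[Section A.2]{mio2} provide the requisite framework), applied to the pair process $Z=(X,V)$ regarded as taking values in $\H\times\H$. First I would recast \eqref{ISIHD-S} in the compact form $dZ(t)=b(t,Z(t))dt+\Sigma(t,Z(t))dW(t)$, where the drift is $b(t,(x,v))=\pa{v,\,-\gamma(t)v-\nabla f(x+\beta(t)v)}$ and the diffusion $\Sigma(t,(x,v))=\pa{0,\,\sigma(t,x+\beta(t)v)}$ acts only on the $v$-component. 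The crucial structural observation is that the position component $X$ is a finite-variation (drift-only) process: it carries no martingale part, so its quadratic variation and its covariation with $V$ both vanish, and the only nontrivial quadratic covariation is $d\langle V,V\rangle_t=\sigma\sigma^{\star}\,dt$.

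Next I would apply the general It\^o formula to $Y(t)=\phi(t,Z(t))$, whose hypotheses are met since $\phi(\cdot,x,v)\in C^1([t_0,+\infty[)$ and $\phi(t,\cdot,\cdot)\in C^2(\H\times\H)$. This produces four groups of terms: the time derivative $\frac{\partial\phi}{\partial t}$; the first-order drift contributions $\langle\nabla_x\phi,V\rangle$ and $\langle\nabla_v\phi,-\gamma V-\nabla f(X+\beta V)\rangle$, obtained by pairing the gradient of $\phi$ against the drift $b$; the stochastic (It\^o) integral $\langle\sigma^{\star}\nabla_v\phi,dW\rangle$; and the second-order correction $\frac{1}{2}\tr[\sigma\sigma^{\star}\nabla_v^2\phi]$. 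Because $X$ has no diffusion part, no $\nabla_x^2\phi$ or mixed $\nabla_{xv}^2\phi$ terms survive in the correction: only $\nabla_v^2\phi$ contracted against $\sigma\sigma^{\star}$ remains, which is precisely the form claimed. Collecting these terms and integrating from $t_0$ to $t$ yields the stated decomposition.

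Finally, for the martingale assertion I would invoke the standard theory of the It\^o integral. The integrand $s\mapsto\sigma^{\star}(s,X(s)+\beta(s)V(s))\nabla_v\phi(s,X(s),V(s))$ is progressively measurable, and the hypothesis $\EE\pa{\int_{t_0}^T\Vert\sigma^{\star}\nabla_v\phi\Vert^2\,ds}<+\infty$ places it in the space of square-integrable integrands on $[t_0,T]$; by the It\^o isometry the corresponding stochastic integral is then a square-integrable continuous martingale started at $0$, and hence has zero expectation. The main point requiring care---rather than a genuine obstacle---is justifying the applicability of the infinite-dimensional It\^o formula in the cylindrical Brownian setting and tracking that the finite-variation nature of $X$ eliminates every second-order contribution except the $\nabla_v^2\phi$ term; once the SDE is cast in the compact form above, the remainder is a bookkeeping specialization of the general formula.
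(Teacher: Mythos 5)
Your proposal is correct and follows essentially the same route as the paper: both lift the system to the product space $\H\times\H$, write \eqref{ISIHD-S} in the compact form $dZ=\Psi\,dt+\Phi\,dW$ with a diffusion that acts only on the $V$-component, and invoke the general infinite-dimensional It\^o formula, so that the absence of a martingale part in $X$ leaves only the $\nabla_v^2\phi$ term in the second-order correction. The only cosmetic difference is that the paper packages the diffusion as a $2\times 2$ block-operator matrix driven by a two-component cylindrical Brownian motion, whereas you phrase the same fact via vanishing quadratic covariations.
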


\begin{proof}
\tcb{
     Let $Z\in \H\times\H, \Psi:\R_+\times \H\times\H\rightarrow\H\times H,$ and $\Phi(t,Z):\R_+\times \H\times\H\rightarrow\mathcal{M}_{2\times 2}(\mathcal{L}_2(\K;\H))$ such that $$Z=\begin{pmatrix}
        X\\
        V
    \end{pmatrix},
    \Psi(t,Z)=\begin{pmatrix}
        V\\-\gamma(t)V-\nabla f(X+\beta(t)V)
    \end{pmatrix}, \Phi(t,Z)=\begin{pmatrix}
        0 & 0\\
        0 & \sigma(t,X+\beta(t)V)
    \end{pmatrix}.$$
    Let also $W(t)=(W_1(t),W_2(t))$, where $W_1,W_2$ are two independent $\K-$valued cylindrical Brownian motions. Then, we can write \eqref{ISIHD-S} as
    \begin{equation}
\begin{cases}
    Z(t)&=\int_{t_0}^t \Psi(t,Z(t))dt+\int_{t_0}^t \Phi(t,Z(t))dW(t).\\
    Z(t_0)&=(X_0,V_0).
    \end{cases}
    \end{equation}
}
\tcb{
The hypotheses let us use \cite[Section~2.3.2]{infinite} to obtain \begin{align*}
    Y(t)=\phi(t,Z(t))&=Y(t_0)+\int_{t_0}^t \frac{\partial \phi}{\partial t}(s,Z(s))ds+\int_{t_0}^t \langle \nabla_z \phi(s,Z(s)), \Psi(s,Z(s)) \rangle_{\H\times \H}ds\\
    &+\int_{t_0}^t\langle \nabla_z\phi(s,Z(s)) ,\Phi(s,Z(s))dW(s)\rangle_{\H\times\H}\\
    &+\frac{1}{2}\int_{t_0}^t \tr[\nabla_{z}^2\phi(s,Z(s))\Phi(s,Z(s))\Phi^\star(s,Z(s))]ds,
\end{align*}
where $\nabla_z$ and $\nabla_z^2$ denote the gradient and Hessian matrix with respect to the variable $z$, corresponding to first- and second-order derivatives, respectively. We obtain the desired result by replacing the actual values of $\Psi$ and $\Phi$ and using the usual inner product of $\H\times \H$: \[
\langle (x_1, x_2), (y_1, y_2) \rangle_{\H \times \H} \eqdef \langle x_1, y_1 \rangle_\H + \langle x_2, y_2 \rangle_\H.
\]   
}
\end{proof}

\section{General \texorpdfstring{$\gamma$ and $\beta$}{}}\label{sec:nesterov}
In this section, we will develop a Lyapunov analysis based on \cite{hessianpert} to study almost sure, and in expectation properties of the dynamic \eqref{ISIHD-S}, when the parameters $\gamma$ and $\beta$ are general functions. This will allow to go much further and consider parameters not covered in \cite{mio3} which exploits the relationship between first-order and second-order systems. We will also apply our results to two special cases: (i) $\gamma$ is a differentiable, decreasing and vanishing function, with vanishing derivative, and $\beta$ is a positive constant; and (ii) $\gamma(t)=\frac{\alpha}{t}$, and $\beta(t)=\gamma_0+\frac{\beta}{t}$ (with $\gamma_0,\beta >0$). These cases are again are not covered by  results in \cite{mio3}.\smallskip

Recall that our focus in this paper is on an optimization perspective, and as we argued in the introduction, we will study the long time behavior of \eqref{ISIHD-S} as the diffusion term vanishes when $t \to +\infty$. Therefore, throughout the paper, we assume that the diffusion (volatility) term $\sigma$ satisfies:
\begin{equation*}
\tag{$\mathrm{H}_\sigma$}\label{H}
\begin{cases}
\sup_{t \geq t_0,x \in \H} \Vert\sigma(t,x)\Vert_{\HS}<+\infty, \\
\Vert\sigma(t,x')-\sigma(t,x)\Vert_{\HS}\leq l_0\norm{x'-x},
\end{cases}
\end{equation*}   
for some $l_0>0$ and for all  $t\geq t_0, x, x'\in \H$. The Lipschitz continuity assumption is mild and required to ensure the well-posedness of \eqref{ISIHD-S}.
\begin{remark}\label{sigma*}
Under the hypothesis \eqref{H} we have that there exists $\sigma_*^2>0$ such that 
$$
\Vert\sigma(t,x)\Vert_{\HS}^2=\tr[\Sigma(t,x)]\leq \sigma_*^2,\quad \forall t\geq t_0,\forall x\in\H,$$
where $\Sigma\eqdef \sigma\sigma^{\star}$.
Let us also define $\sigma_{\infty}:[t_0,+\infty[\rightarrow \R_+$ as: $\sigma_{\infty}(t)\eqdef \sup_{x\in\H}\Vert\sigma(t,x)\Vert_{\HS}.$
\end{remark}

\noindent Now, we follow with the hypotheses we will require over the damping parameters.

\smallskip

For $t_0> 0$, let $\gamma:[t_0,+\infty[\rightarrow \R_+$ be a viscous damping, we denote \begin{equation}
    p(t)\eqdef \exp\left(\int_{t_0}^t \gamma(s)ds\right).
    \end{equation}
Besides, if 
\begin{equation}\tag{$\mathrm{H}_\gamma$}\label{H1}
    \int_{t_0}^{+\infty}\frac{ds}{p(s)}<+\infty,
\end{equation}
we define $\Gamma:[t_0,+\infty[\rightarrow\R_+$ by \begin{equation}\label{defgamma}
    \Gamma(t)\eqdef p(t)\int_t^{+\infty}\frac{ds}{p(s)}.
\end{equation}

\begin{remark}
Let us notice that $\Gamma$ satisfies the relation $$\Gamma'=\gamma\Gamma-1.$$
\end{remark}

For $t_0> 0$, let $\beta:[t_0,+\infty[\rightarrow \R_+$ be a geometric damping that we will assume to be a differentiable function. We will occasionally need to impose the additional assumption that there exists $c_1,c_2>0$, and $t_1>t_0$ such that for every $t\geq t_1$:  
\begin{equation*}\tag{$\mathrm{H}_\beta$}\label{H2}
    \begin{cases}
    \beta(t)&\leq c_1;  \\
     \Big|\frac{\beta'(t)-\gamma(t)\beta(t)+1}{\beta(t)}\Big|&\leq c_2.
\end{cases}
\end{equation*}

\smallskip

 We recall also that $\calS\eqdef \argmin(f)$.

\subsection{Reformulation of \texorpdfstring{\eqref{ISIHD-S}}{}}
The formulation of the dynamic \eqref{ISIHD-S} is known as the Hamiltonian formulation. 
However, it is not the only one. In the deterministic case, an alternative equivalent and more flexible first-order reformulation of \eqref{ISIHD} was proposed in \cite{hessianpert}. The motivation there was that this equivalent reformulation can handle the case where $f$ is non-smooth. Although we will not consider the non-smooth case here, we will still extend and use that equivalent reformulation to the stochastic case.\smallskip

Consider the dynamic \eqref{ISIHD-S}, and let us define the auxiliary variable 
$$Y(t)=X(t)+\beta(t)V(t), \quad t>t_0 .$$
We have that \begin{align*}
    dY(t)&=dX(t)+\beta'(t)V(t)+\beta(t)dV(t)\\
    &=-\beta(t)\nabla f(Y(t))dt-(\beta'(t)-\gamma(t)\beta(t)+1)\left(\frac{X(t)-Y(t)}{\beta(t)}\right)dt+\beta(t)\sigma(t,Y(t))dW(t).
\end{align*}
So we can reformulate \eqref{ISIHD-S} in terms of $X,Y$ in the following way:
\begin{align} \label{refor}\tag{$\mathrm{ISIHD-S_R}$}
\begin{cases}
dX(t)&=-\left(\frac{X(t)-Y(t)}{\beta(t)}\right)dt,\quad t>t_0, \nonumber\\
dY(t)&=-\beta(t)\nabla f(Y(t))dt-(\beta'(t)-\gamma(t)\beta(t)+1)\left(\frac{X(t)-Y(t)}{\beta(t)}\right)dt+\beta(t)\sigma(t,Y(t))dW(t),\quad t>t_0,\\
X(t_0)&=X_0,\quad Y(t_0)=X_0+\beta(t_0)V_0, \nonumber
\end{cases}
\end{align}
where the subscript 'R' indicates that this is a reformulation. Moreover, we can reformulate \eqref{refor} in the product space $\H\times\H$ by setting $Z(t)=(X(t),Y(t))\in\H\times\H$, and thus \eqref{refor} can be equivalently written as \begin{equation}\label{refor2}\begin{cases}
    dZ(t)&=-\beta(t)\nabla \mathcal{G}(Z(t))dt-\mathcal{D}(t,Z(t))dt+\hat{\sigma}(t,Z(t))dW(t),\quad t>t_0,\\
    Z(t_0)&=(X_0,X_0+\beta(t_0)V_0),\end{cases}
\end{equation}
where $\mathcal{G}:\H\times\H\rightarrow\R$ is the convex function defined as $\mathcal{G}(Z)=f(Y)$, and the time-dependent operator $\mathcal{D}:[t_0,+\infty[\times\H\times\H\rightarrow\H\times\H$ is given by \begin{equation}
    \mathcal{D}(t,Z)=\left(\frac{1}{\beta(t)}(X-Y),\frac{\beta'(t)-\gamma(t)\beta(t)+1}{\beta(t)}(X-Y)\right),
\end{equation}
and the stochastic noise $\hat{\sigma}\in\mathcal{M}_{2\times 2}(\calL_2(\K;\H))$ defined by $\hat{\sigma}(t,Z)=\begin{pmatrix}0 &0 \\
0 & \beta(t)\sigma(t,Y)\end{pmatrix}$, and 
$W(t)=(W_1(t),W_2(t))$, where $W_1,W_2$ are two independent $\K-$valued cylindrical Brownian motions.

\subsection{Fast convergence properties: convex case}
To obtain properties in almost sure sense and in expectation of \eqref{ISIHD-S}, we are going to adapt the Lyapunov analysis shown on \cite{hessianpert} for the dynamic \eqref{ISIHD}. \smallskip

To that purpose, let us consider $t_1>t_0$, $\gamma,\beta:[t_0,+\infty[\rightarrow\R_+$ be fixed functions and let $a,b,c,d:[t_0,+\infty[\rightarrow\R$ be differentiable functions (on $]t_0,+\infty[$) satisfying the following system for all $t>t_1$:
\begin{equation}\label{systemabcd}\tag{$\mathrm{S}_{a,b,c,d}$}
    \begin{cases}
    a'(t)-b(t)c(t)&\leq0\\
    -a(t)\beta(t)&\leq 0\\
    -a(t)\gamma(t)\beta(t)+a(t)\beta'(t)+a(t)-c(t)^2+b(t)c(t)\beta(t)&=0\\
    b'(t)b(t)+\frac{d'(t)}{2}&\leq 0\\
    b'(t)c(t)+b(t)(b(t)+c'(t)-c(t)\gamma(t))+d(t)&=0\\
    c(t)(b(t)+c'(t)-c(t)\gamma(t))&\leq 0.
    \end{cases}
\end{equation}
Given $x^{\star}\in\mathcal{S}$, we consider \begin{equation}\label{liapunov}
    \mathcal{E}(t,x,v)=a(t)(f(x+\beta(t)v)-\min(f))+\frac{1}{2}\Vert b(t)(x-x^{\star})+c(t)v\Vert^2+\frac{d(t)}{2}\Vert x-x^{\star}\Vert^2.
\end{equation}
\begin{remark}
It was shown in \cite[Section~3.1]{alecsa} and \cite[Lemma~1]{hessianpert} that energy function $\mathcal{E}$ with $a,b,c,d$ satisfying the system \eqref{systemabcd} is a Lyapunov function for \eqref{ISIHD} when $\gamma(t)=\frac{\alpha}{t}$ (with $\alpha>3$) and $\beta(t)=\gamma_0+\frac{\beta}{t}$ (with $\gamma_0,\beta\geq 0$), hence, useful to obtain convergence guarantees of that dynamic. We will see that the same system \eqref{systemabcd} also covers the case of general coefficients $\gamma$ and $\beta$, hence providing insights on the convergence properties of \eqref{ISIHD-S} when one can find the corresponding functions $a,b,c,d$. 
\end{remark}
In the following proposition, we state an abstract integral bound, as well as almost sure and in expectation convergence properties for \eqref{ISIHD-S}.
\begin{proposition}\label{abcd}
Assume that $f,\sigma$ satisfy \eqref{H0} and \eqref{H}, respectively. Let $\nu\geq 2$, and consider the dynamic \eqref{ISIHD-S} with initial data $X_0,V_0\in\Lp^{\nu}(\Omega;\H)$. Consider also $\gamma, \beta$ from \eqref{ISIHD-S} satisfying \eqref{H1} and \eqref{H2}. Then, there exists a unique solution $(X,V)\in S_{\H\times \H}^{\nu}[t_0]$ of \eqref{ISIHD-S}. \tcb{Moreover, if there exist functions $a,b,c,d$ satisfying \eqref{systemabcd}, and} $t\mapsto m(t)\sigma_{\infty}^2(t)\in\Lp^1([t_0,+\infty[)$, where $m(t)\eqdef\max\{1,a(t),c^2(t)\}$, then the following statements hold: 
\begin{enumerate}[label=(\roman*)]
    \item 
    If $b(t)c(t)-a'(t)=\mathcal{O}(c(t)(\gamma(t)c(t)-c'(t)-b(t)))$, then $$\int_{t_0}^{+\infty}(b(s)c(s)-a'(s))(f(X(s))-\min f+\Vert V(s)\Vert^2)ds<+\infty \quad a.s..$$ 
\item  If there exists $\eta>0,\hat{t}>t_0$ such that $$\eta\leq c(t)(\gamma(t)c(t)-c'(t)-b(t)),\quad \eta\leq a(t)\beta(t),\quad \gamma(t)\leq \eta,\quad \forall t>\hat{t},$$ then $\lim_{t\rightarrow +\infty}\Vert V(t)\Vert=0$ a.s., $\lim_{t\rightarrow +\infty}\Vert \nabla f(X(t)+\beta(t)V(t))\Vert=0$ a.s., and 
$\lim_{t\rightarrow +\infty}\Vert \nabla f(X(t))\Vert=0$ a.s.
\item   If there exists $D>0, \tilde{t}>t_0$ such that $d(t)\geq D$ for $t>\tilde{t}$, then :
$$\mathbb{E}\left(\Vert V(t)\Vert^2\right)=\mathcal{O}\left(\frac{1+b^2(t)}{c^2(t)}\right),$$
and $$ \mathbb{E}\left(f(X(t))-\min f\right)=\mathcal{O}\left(\max\Bigg\{\frac{1}{a(t)},\frac{\beta(t)\sqrt{1+b^2(t)}}{\sqrt{a(t)}c(t)},\frac{\beta^2(t)\left(1+b^2(t)\right)}{c^2(t)}\Bigg\}\right).$$

\end{enumerate}
\end{proposition}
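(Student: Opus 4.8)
The plan is to use the Lyapunov function $\mathcal{E}$ from \eqref{liapunov} as the test function in It\^o's formula, exploiting that the system \eqref{systemabcd} is tailored to render its drift nonpositive up to an It\^o correction. Existence and uniqueness of $(X,V)\in S_{\H\times\H}^{\nu}[t_0]$ follow from Proposition~\ref{existencecor} under \eqref{H0}, \eqref{H}, \eqref{H1}, \eqref{H2}. For the estimates I first apply Proposition~\ref{itos} to $Y(t)=\mathcal{E}(t,X(t),V(t))$, which is legitimate since $\mathcal{E}(t,\cdot,\cdot)\in C^2(\H\times\H)$. A direct computation gives $\nabla_v^2\mathcal{E}(t,x,v)=a(t)\beta(t)^2\nabla^2 f(x+\beta(t)v)+c(t)^2 I_{\H}$, and, writing $g=\nabla f(X+\beta V)$ and $u=b(X-x^{\star})+cV$, the drift aggregates $\partial_t\mathcal{E}$, the transport term $\langle\nabla_x\mathcal{E},V\rangle$, and the damping term $-\langle\nabla_v\mathcal{E},\gamma V+g\rangle$.

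The algebraic heart is to regroup the drift by the monomials $(f(X+\beta V)-\min f,\ \|g\|^2,\ \langle g,V\rangle,\ \|X-x^{\star}\|^2,\ \langle X-x^{\star},V\rangle,\ \|V\|^2)$ and to use convexity of $f$ in the form $\langle g,X-x^{\star}\rangle\geq (f(X+\beta V)-\min f)-\beta\langle g,V\rangle$ to treat the cross term $-bc\langle X-x^{\star},g\rangle$ (using the natural sign conditions $a,d\ge0$, $bc\ge0$ guaranteed by the construction). The six (in)equalities of \eqref{systemabcd} then make the coefficients of $\langle g,V\rangle$ and $\langle X-x^{\star},V\rangle$ vanish and force those of $f(X+\beta V)-\min f$, $\|g\|^2$, $\|X-x^{\star}\|^2$, $\|V\|^2$ to be nonpositive, so that the drift is bounded above by $\tfrac12\tr[\Sigma\nabla_v^2\mathcal{E}]$. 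Since $0\preceq\nabla^2 f\preceq L\,I_{\H}$ and $\Sigma\succeq0$, Remark~\ref{sigma*} and the boundedness of $\beta$ from \eqref{H2} control this correction by $\tfrac12(a\beta^2 L+c^2)\sigma_{\infty}^2\lesssim m(t)\sigma_{\infty}^2(t)$, integrable by hypothesis. Integrating and rearranging yields
\begin{multline*}
\mathcal{E}(t,X(t),V(t))+\int_{t_0}^{t}\Big[(bc-a')(f(X+\beta V)-\min f)+a\beta\|g\|^2+c(\gamma c-c'-b)\|V\|^2\Big]ds \\
\le \mathcal{E}(t_0,X_0,V_0)+\int_{t_0}^t\tfrac12\tr[\Sigma\nabla_v^2\mathcal{E}]\,ds+M(t),
\end{multline*}
where $M=\int_{t_0}^{\cdot}\langle\sigma^{\star}\nabla_v\mathcal{E},dW\rangle$ is a true martingale, the integrability in Proposition~\ref{itos} following from the $S_{\H\times\H}^{\nu}[t_0]$ moment bounds and \eqref{H}. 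As the three bracketed integrands and $\mathcal{E}$ are nonnegative, a Robbins--Siegmund supermartingale argument gives a.s.\ convergence of $\mathcal{E}(t,X(t),V(t))$ and finiteness of each integral. For (i), the hypothesis $bc-a'=\mathcal{O}(c(\gamma c-c'-b))$ absorbs $(bc-a')\|V\|^2$ into $c(\gamma c-c'-b)\|V\|^2$, and the descent lemma (Lemma~\ref{descent}) together with $\int a\beta\|g\|^2<\infty$ transfers the value bound from $f(X+\beta V)$ to $f(X)$.

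For (iii), I take expectations in the displayed identity: the martingale term drops, the correction integral is uniformly bounded by the $\Lp^1$ assumption, and nonnegativity of the integrands gives $\sup_t\EE[\mathcal{E}(t,X(t),V(t))]<+\infty$. When $d\ge D>0$, both $\EE\|u\|^2$ and $\EE\|X-x^{\star}\|^2$ are $\mathcal{O}(1)$, whence $c^2\EE\|V\|^2\lesssim\EE\|u\|^2+b^2\EE\|X-x^{\star}\|^2$ gives the velocity rate $\mathcal{O}((1+b^2)/c^2)$. For the value rate I bound $f(X)-\min f$ by the descent lemma applied at $X+\beta V$, control $\|g\|\le\sqrt{2L(f(X+\beta V)-\min f)}$ via Corollary~\ref{2L}, and apply Cauchy--Schwarz to the cross term $\beta\,\EE[\|g\|\|V\|]\le\beta\sqrt{\EE[f(X+\beta V)-\min f]}\sqrt{2L\,\EE\|V\|^2}$; combining with $\EE[f(X+\beta V)-\min f]=\mathcal{O}(1/a)$ reproduces exactly the three-term maximum.

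Part (ii) is the delicate one, and I expect it to be the main obstacle: summability of $\int\|V\|^2$ and $\int\|g\|^2$ does not by itself force $\|V(t)\|,\|g(t)\|\to0$. Under the uniform lower bounds $\eta\le c(\gamma c-c'-b)$, $\eta\le a\beta$ and $\gamma\le\eta$, I plan to upgrade $\liminf$ to $\lim$ through a second It\^o computation on $t\mapsto\|V(t)\|^2$ (and on $\|g(t)\|^2$), showing these processes have a.s.\ controlled oscillation modulo an integrable martingale, so that the a.s.\ convergence of $\mathcal{E}$ together with the finite integrals precludes persistent oscillations; the $L$-Lipschitz bound on $\nabla f$ and Corollary~\ref{2L} then propagate $\|V\|\to0$ to $\|\nabla f(X+\beta V)\|\to0$ and finally to $\|\nabla f(X)\|\to0$. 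The difficulty is precisely ensuring that the noise, although summable, cannot revive the vanishing quantities in the almost sure sense.
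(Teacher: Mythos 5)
Your overall strategy --- It\^o's formula applied to the Lyapunov function \eqref{liapunov}, the system \eqref{systemabcd} forcing the drift to be nonpositive modulo the It\^o correction $\tfrac12\tr[\Sigma\nabla_v^2\mathcal{E}]\lesssim (La\beta^2+c^2)\sigma_\infty^2$, and Theorem~\ref{impp} (the Robbins--Siegmund-type result) to get a.s.\ convergence of $\mathcal{E}$ and finiteness of the three integrals --- is exactly the paper's route (Propositions~\ref{abcdcomplete} and \ref{abcdexpcomplete}). Your treatments of (i) (descent lemma plus Cauchy--Schwarz plus Corollary~\ref{2L} to transfer the integral bound from $f(X+\beta V)$ to $f(X)$, absorbing $(bc-a')\|V\|^2$ via the $\mathcal{O}$ hypothesis) and of (iii) (taking expectations, dropping the signed terms, and deriving the three-term maximum) coincide with the paper's.

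The genuine gap is in part (ii), which you yourself flag as unresolved. Two specific problems. First, your claim that the $L$-Lipschitz bound and Corollary~\ref{2L} ``propagate $\|V\|\to 0$ to $\|\nabla f(X+\beta V)\|\to 0$'' is not a valid implication: $\|V(t)\|\to 0$ says nothing about $\nabla f$ at the moving point $X(t)+\beta(t)V(t)$, and Corollary~\ref{2L} would require knowing $f(X+\beta V)\to\min f$ a.s., which is not available under the hypotheses of (ii) (the a.s.\ limit of $\mathcal{E}$ mixes several terms with time-varying coefficients and does not isolate the value gap). Second, your fallback of applying It\^o's formula to $t\mapsto\|\nabla f(Y(t))\|^2$ requires $y\mapsto\|\nabla f(y)\|^2$ to be $C^2$, i.e.\ essentially $f\in C^3$, which is not assumed. (Your It\^o computation on $\|V(t)\|^2$ \emph{is} viable for the velocity claim: the cross term $\int\langle V,g\rangle$ is absolutely integrable since both $\int\|V\|^2$ and $\int\|g\|^2$ are a.s.\ finite under the lower bounds $\eta\le c(\gamma c-c'-b)$ and $\eta\le a\beta$, so Theorem~\ref{impp} gives existence of $\lim\|V(t)\|^2$ and Lemma~\ref{lim0} forces it to be $0$; this is a legitimate alternative to what the paper does.) The paper handles both limits in (ii) by a contradiction argument that needs only $L$-Lipschitz continuity of $\nabla f$: from square-integrability one gets $\liminf=0$; if the $\limsup$ exceeded $\delta$, one picks $1$-separated times $t_k$ at which the quantity exceeds $\delta$, shows that $\|V(t)-V(t_k)\|$ (resp.\ $\|Y(t)-Y(t_k)\|$) stays below $\delta/2$ (resp.\ $\delta/(2L)$) on $[t_k,t_k+\varepsilon]$ --- using the integrated drift bounds, the already-established $\int\|V\|^2<\infty$ and $\|V\|\to0$, the continuity of $\beta$, and the a.s.\ convergence of the stochastic integral $N_t=\int\sigma\,dW$ guaranteed by Theorem~\ref{convmartingale} under $\sigma_\infty\in\Lp^2$ --- and concludes that the quantity stays above $\delta/2$ on infinitely many disjoint intervals of fixed length, contradicting its square-integrability. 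Some such uniform-oscillation control on small intervals is the missing ingredient in your proposal.
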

This is a compact version that extracts only the most important points from the more detailed and complete Propositions \ref{abcdcomplete}, \ref{abcdexpcomplete} which are proved in the appendix.

The complete version of the previous proposition (\ie Propositions \ref{abcdcomplete} and \ref{abcdexpcomplete}) generalizes the results poved in \cite{hessianpert} to the stochastic setting. However, they lack practical use if we cannot exhibit $a,b,c,d$ functions that satisfy \eqref{systemabcd}. Although we are not able to solve this system in general, in Corollaries \ref{corabcdd} and \ref{cor1} we will specify some particular cases for $\gamma$ and $\beta$ where such functions $a,b,c,d$ can be exhibited to satisfy the system \eqref{systemabcd}.

\smallskip

The following corollary provides a specific case where a solution to the system \eqref{systemabcd} can be exhibited, which was not discussed in \cite{alecsa,hessianpert}. Moreover, we show the implications it has on the stochastic setting.

\begin{corollary}[Decreasing and vanishing $\gamma$, with vanishing $\gamma'$ and positive constant $\beta$]\label{corabcdd}
     Consider the context of Proposition \ref{abcd} in the case where $\beta(t)\equiv\beta>0$, $\gamma$ satisfying \eqref{H1}, such that it is a differentiable, decreasing, and vanishing function, with $\lim_{t\rightarrow +\infty}\gamma'(t)=0$, and satisfying that: 
     \begin{equation} \label{H30}\tag{$\mathrm{H}_\gamma'$}
    \text{there exists } t_2\geq t_0 \text{ and } m<\frac{3}{2} \text{ such that } \gamma(t)\Gamma(t)\leq m \text{ for every } t\geq t_2.
\end{equation}  
Let $b\in ]2(m-1),1[$, then choosing \begin{align*}
     a(t)&=\frac{\Gamma(t)(\Gamma(t)-\beta b)}{1-\beta\gamma(t)},\\
     b(t)&=b,\\
     c(t)&=\Gamma(t),\\
     d(t)&=b(1-b),
 \end{align*}
there exists $\hat{t}>t_0$ such that the system \eqref{systemabcd} is satisfied for every $t\geq \hat{t}$.\smallskip

Given $x^{\star}\in\calS$ and $\sigma_{\infty}$ be such that $t\mapsto \Gamma(t)\sigma_{\infty}(t)\in\Lp^2([t_0,+\infty[)$, then the following statements hold:
 \begin{enumerate}[label=(\roman*)]
      \item $\int_{t_0}^{+\infty} \Gamma(s)\pa{f(X(s))-\min f+\Vert V(s)\Vert^2}ds<+\infty$ a.s..
      \item $\lim_{t\rightarrow +\infty}\Vert\nabla f(X(t))\Vert+\Vert V(t)\Vert=0$ a.s..
    
         \item \label{1gamma2} $\mathbb{E}(f(X(t))-\min f+\Vert V(t)\Vert^2)=\mathcal{O}\left(\frac{1}{\Gamma^2(t)}\right)$.
     \end{enumerate}
\end{corollary}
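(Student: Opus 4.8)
The plan is to split the proof into two stages. First I would verify that the explicit choice of $a,b,c,d$ solves the system \eqref{systemabcd} for $t$ large, and then I would simply invoke the three parts of Proposition \ref{abcd}, checking that their hypotheses reduce to the stated assumptions on $\gamma,\beta,\sigma_\infty$. Throughout I would use $\Gamma'=\gamma\Gamma-1$ together with $\beta'\equiv b'\equiv d'\equiv 0$ and $c'=\Gamma'=\gamma\Gamma-1$, so that $b+c'-c\gamma=b-1$ and $\gamma c-c'-b=1-b$. With these, the third equation of \eqref{systemabcd} becomes an identity: solving $-a\gamma\beta+a-\Gamma^2+b\beta\Gamma=0$ for $a$ returns exactly $a=\Gamma(\Gamma-\beta b)/(1-\beta\gamma)$. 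The fourth and fifth equations hold since $b'=d'=0$ and $b(b-1)+b(1-b)=0$, and the sixth reads $c(b+c'-c\gamma)=\Gamma(b-1)\le 0$ because $b<1$. Thus only the second and first equations are substantive.

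For both remaining equations I would first establish that $\Gamma(t)\to+\infty$. Indeed, if $\Gamma$ stayed bounded then $\gamma\to 0$ would force $\Gamma'=\gamma\Gamma-1\to-1$ and hence $\Gamma\to-\infty$, contradicting $\Gamma>0$; and at any interior local minimum of $\Gamma$ one has $\Gamma'=0$, i.e. $\gamma\Gamma=1$, so $\Gamma=1/\gamma\to+\infty$ along such times. Combined with $1-\beta\gamma\to 1>0$, this makes $a>0$ for large $t$, which settles the second equation. The first equation, $a'-b\Gamma\le 0$, is the heart of the matter. Writing $a=N/D$ with $N=\Gamma^2-\beta b\Gamma$ and $D=1-\beta\gamma$, one has $a'=N'/D+\beta\gamma' N/D^2$ with $N'=\Gamma'(2\Gamma-\beta b)$. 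Since $\gamma$ is decreasing, $\gamma'\le 0$, and $N>0$ for large $t$, the term $\beta\gamma'N/D^2$ is nonpositive, so it suffices to prove $N'/D\le b\Gamma$. Clearing $D>0$ and substituting $\Gamma'=\gamma\Gamma-1$, this reduces to $\Gamma(2\gamma\Gamma-2-b)+b\beta\le 0$; here \eqref{H30} gives $2\gamma\Gamma-2-b\le 2m-2-b=-\delta$ with $\delta>0$ exactly because $b>2(m-1)$, so the expression is at most $-\delta\Gamma+b\beta$, which is negative once $\Gamma\ge b\beta/\delta$, i.e. for $t$ large. The same computation shows that $b\Gamma-a'$ is of order $\Gamma$, which I record for the next stage.

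For the second stage I would first check the integrability requirement of Proposition \ref{abcd}: since $a\sim\Gamma^2$ and $c^2=\Gamma^2$, we have $m(t)=\max\{1,a(t),c^2(t)\}$ comparable to $\Gamma^2$, so $\int_{t_0}^{+\infty}m(t)\sigma_\infty^2(t)\,dt$ is finite if and only if $\int_{t_0}^{+\infty}\Gamma^2\sigma_\infty^2\,dt<+\infty$, which holds by the assumption $\Gamma\sigma_\infty\in\Lp^2$. Then part (i) applies because $c(\gamma c-c'-b)=\Gamma(1-b)$ and $b\Gamma-a'$ is of order $\Gamma$, so the $\calO$-condition holds; using the lower bound $b\Gamma-a'\ge(\delta/3)\Gamma$ the resulting estimate yields $\int_{t_0}^{+\infty}\Gamma(s)(f(X(s))-\min f+\Vert V(s)\Vert^2)\,ds<+\infty$ a.s.. Part (ii) applies with $\eta=1$, say, since $c(\gamma c-c'-b)=(1-b)\Gamma\to+\infty$, $a\beta\to+\infty$ and $\gamma\to 0$; it gives $\Vert V(t)\Vert\to 0$ and $\Vert\nabla f(X(t))\Vert\to 0$ a.s., hence conclusion (ii). Finally part (iii) applies because $d\equiv b(1-b)>0$: it yields $\EE\Vert V\Vert^2=\calO((1+b^2)/\Gamma^2)=\calO(\Gamma^{-2})$, while each of the three terms in the rate for $\EE(f(X)-\min f)$ is $\calO(\Gamma^{-2})$ (using $a\sim\Gamma^2$, $\sqrt a\sim\Gamma$, $c=\Gamma$), which together give conclusion (iii).

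The main obstacle is the first equation of \eqref{systemabcd} and the asymptotics it demands: proving $\Gamma\to+\infty$ cleanly, and above all estimating $a'$ sharply enough to conclude that $b\Gamma-a'$ is genuinely of order $\Gamma$. The delicate point is the $\gamma'$-contribution $\beta\gamma'N/D^2\approx\beta\gamma'\Gamma^2$ to $a'$: its sign ($\gamma'\le 0$) immediately yields the upper bound on $a'$ needed to verify the first equation, but the matching lower bound, required so that $b c-a'=\calO(\Gamma)$ in part (i), needs $|\gamma'|\Gamma^2=\calO(\Gamma)$. This is precisely where the regularity $\gamma'\to 0$ together with $\gamma\Gamma\le m$ must be used with care (e.g. via $\gamma'\Gamma^2\approx-(\ell-1)\gamma\Gamma$ once $\gamma\Gamma$ is shown to converge), and it is the step I expect to require the most attention.
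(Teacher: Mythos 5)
Your proof is correct and follows the same two-stage strategy as the paper: verify \eqref{systemabcd} for the proposed $a,b,c,d$ for $t$ large, then feed them into Proposition \ref{abcd}. For stage one your organization is in fact cleaner than the paper's: writing $a=N/D$ with $N=\Gamma(\Gamma-\beta b)$, $D=1-\beta\gamma$, discarding the nonpositive contribution $\beta\gamma' N/D^2$ at once, and reducing $N'/D\le b\Gamma$ to $\Gamma(2\gamma\Gamma-2-b)+b\beta\le 0$ is exactly the content of the paper's term-by-term expansion \eqref{topr}--\eqref{topr2}, with the same driving inequality $2\gamma\Gamma-2-b\le 2m-2-b<0$ coming from $b>2(m-1)$. (The paper gets $\Gamma\to+\infty$ more directly from $\Gamma\ge 1/\gamma$, which holds since $\gamma$ is decreasing, but your argument is also valid.)

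The one point you flag as delicate — that invoking Proposition \ref{abcd}(i) literally requires the upper bound $b\Gamma-a'=\calO(\Gamma)$, hence $|\gamma'|\Gamma^2=\calO(\Gamma)$ — is a legitimate observation, and your proposed fix via ``$\gamma\Gamma$ converges'' would not work as stated. But this bound is not actually needed, and this is how the paper (implicitly) closes the argument: the $\calO$ hypothesis in Proposition \ref{abcd}(i) serves only to deduce $\int(bc-a')\Vert V\Vert^2\,ds<+\infty$ from item \ref{five} of Proposition \ref{abcdcomplete}. For the corollary one should instead use the complete Proposition \ref{abcdcomplete} directly: item \ref{five} already gives $\int\Gamma(s)\Vert V(s)\Vert^2ds<+\infty$ since $c(\gamma c-c'-b)=(1-b)\Gamma$; the lower bound $b\Gamma-a'\ge \big(b-2(m-1)-\delta\big)\Gamma-C_b$ (which you have, since $-\beta\gamma'N/D^2\ge 0$ only helps in that direction) applied to item (ii) gives $\int\Gamma\,(f(X+\beta V)-\min f)\,ds<+\infty$; and the descent lemma with Cauchy--Schwarz, item (iii) (note $a\beta\gtrsim\Gamma$), and item \ref{five} then give $\int\Gamma\,(f(X)-\min f)\,ds<+\infty$, exactly as in the proof of Proposition \ref{abcdcomplete}\ref{six} but with weight $\Gamma$ in place of $\tilde b$. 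Everything else in your stage two (the choice of $\eta$ for part (ii), $d=b(1-b)>0$ and the three $\calO(\Gamma^{-2})$ terms for part (iii), and $m(t)\asymp\Gamma^2$ for the integrability of the noise) matches the paper and is correct.
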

\begin{remark}\label{remarkimp}
    When $\gamma(t)=\frac{\alpha}{t}$ with $\alpha>3$ and $t\sigma_{\infty}(t)\in\Lp^2([t_0,+\infty[)$, the previous corollary ensures fast convergence of the values, $\ie, \mathcal{O}(t^{-2})$. Besides, by Corollary \ref{atr}, when $\gamma(t)=\frac{\alpha}{t^r}$ with $r\in ]0,1[,\alpha\geq 1-r, $ and $t^r\sigma_{\infty}(t)\in\Lp^2([t_0,+\infty[)$, the previous corollary ensures convergence of the objective at a rate $\mathcal{O}\left(t^{-2r}\right)$. The latter choice indicates that one can require a weaker integrability condition on the noise, compared to the case $\gamma(t)=\frac{\alpha}{t}$ ($\alpha>3$), but at the price of a slower convergence rate.
\end{remark}
\begin{proof}
    We start by noticing that since $\gamma$ is decreasing, by \cite[Corollary 2.3]{AC1} we have that $\Gamma(t)$ is increasing and $\gamma(t)\Gamma(t)\geq 1$, for every $t\geq t_0$. Also, it is direct that with a fixed $\beta>0$ we satisfy \eqref{H2}.
    
    \smallskip
    
    Letting $b\in ]2(m-1),1[$ and $t_1> t_0$ such that $\beta\leq \frac{1}{\gamma(t_1)}$, this $t_1$ exists since $t\mapsto \frac{1}{\gamma(t)}$ is an increasing function. We choose $c(t)=\Gamma(t)$, by the fifth equation of \eqref{systemabcd}, we get that $d=b(1-b)$, and the fourth equation is trivial.  The third equation implies that $a(t)=\frac{\Gamma(t)(\Gamma(t)-b\beta)}{1-\beta\gamma(t)}$ and the choice of $\beta$ implies that the second equation is satisfied for $t\geq t_1$, since $\beta\leq \frac{1}{\gamma(t_1)}\leq \frac{1}{\gamma(t)}\leq\Gamma(t)$ for every $t>t_1$. By the definition of $c(t)$ and the fact that $b<1$, we directly have that the sixth equation also holds. We just need to check the first equation, to do so, we can see that this equation is equivalent to $$\frac{\Gamma'(t)(2\Gamma(t)-\beta b)(1-\beta\gamma(t))+\beta\Gamma(t)(\Gamma(t)-\beta b)\gamma'(t)}{(1-\beta\gamma(t))^2}\leq b\Gamma(t),$$
    \tcb{by multiplying by $(1-\beta\gamma(t))^2$ at both sides and developing the terms, we get the previous inequality is equivalent to 
    \begin{multline}\label{topr}
    2\Gamma(t)\Gamma'(t)-\beta b\Gamma'(t) -2\beta\gamma(t)\Gamma(t)\Gamma'(t)+b\beta^2\gamma(t)\Gamma'(t)+\beta\Gamma^2(t)\gamma'(t)-b\beta^2\Gamma(t)\gamma'(t) \\
    \leq b\Gamma(t)-2b\beta \gamma(t)\Gamma(t)+b\beta^2\gamma^2\tcb{(t)}\Gamma(t).
    \end{multline}
     \tcb{By \eqref{H30}, there exists $m>\frac{3}{2}$ and $t_2>t_0$ such that $1\leq \gamma(t)\Gamma(t)\leq m$ and $0\leq \Gamma'(t)\leq m-1$} for every $t\geq t_2$.  Since the terms $$-2\beta\gamma(t)\Gamma(t)\Gamma'(t),\quad -\beta b\Gamma'(t),\quad \beta\Gamma^2(t)\gamma'(t)$$ are negative, and $b\beta^2\gamma^2(t)\Gamma(t)$ is positive, thus if we could prove there exists $t_3>\max\{t_0,t_1,t_2\}$ such that for every $t\geq t_3$
    \begin{equation}\label{topr1}
2\Gamma(t)\Gamma'(t)+b\beta^2\gamma(t)\Gamma'(t)-b\beta^2\Gamma(t)\gamma'(t)  \leq b\Gamma(t)-2b\beta \gamma(t)\Gamma(t),
    \end{equation}
    we could conclude that \eqref{topr} holds for every $t\geq t_3$. Rearranging the terms in \eqref{topr1}, we get \begin{equation}\label{topr2}
b\beta^2\gamma(t)\Gamma'(t) +2b\beta \gamma(t)\Gamma(t) \leq \Gamma(t)[b\beta^2\gamma'(t)+b-2\Gamma'(t)],
    \end{equation} and we note that the terms $ b\beta^2\gamma(t)\Gamma'(t), 2b\beta \gamma(t)\Gamma(t)$ are upper bounded by a constant. Since $\lim_{t\rightarrow+\infty}\gamma(t)=0$, we get that $\lim_{t\rightarrow+\infty}\Gamma(t)=+\infty$, therefore if we could prove that there exists $t_3> \max\{t_0,t_1,t_2\}$ such that $$-b\beta^2\gamma'(t)<\delta<b-2(m-1)\leq b-2\Gamma'(t)$$
    for $t\geq t_3$, this would imply that there exists $\hat{t}\geq t_3$ such that \eqref{topr2} holds, which in turn would imply that \eqref{topr} holds for every $t\geq \hat{t}$}. In fact, we see that the previous inequality holds for $t$ large enough (\ie there exists such a $t_3$) since $\lim_{t\rightarrow +\infty}-\gamma'(t)=0$ and the fact that \tcb{$0<b-2(m-1)$ implies that there exists $\delta$ such that $0<\delta<b-2(m-1)$}. Thus, we have checked that the proposed $a,b,c,d$ satisfy the system \eqref{systemabcd} for $t>\hat{t}$.

    \smallskip

    \tcb{The rest of the proof is direct by replacing the specified $a,b,c,d,\gamma,\beta$ functions in Proposition~\ref{abcd}, and the fact that for $t$ large enough, $b\Gamma(t)-a'(t)\geq (b-2(m-1)\tcb{-\delta})\Gamma(t)-C_b$ for some $C_b>0$, that $\lim_{t\rightarrow +\infty}\Gamma(t)=+\infty$, and also that $a(t)=\mathcal{O}(\Gamma^2(t)), \Gamma^2(t)=\mathcal{O}(a(t))$. Since $\lim_{t\rightarrow +\infty}\Gamma(t)=+\infty, \lim_{t\rightarrow +\infty}a(t)=+\infty$, and $\lim_{t\rightarrow +\infty} \gamma(t)=0$, the parameter $\eta>0$ in Proposition~\ref{abcd} can be chosen arbitrarily.}
    
\end{proof}

The following result gives us another case in which we can satisfy the system \eqref{systemabcd}. This generalizes to the stochastic setting the results presented in \cite[Section~3.1]{alecsa} and \cite[Lemma~1]{hessianpert}. Besides, it ensures fast convergence of the values whenever $t\mapsto t\sigma_{\infty}(t)\in\Lp^2([t_0,+\infty[)$.

\begin{corollary}[$\gamma(t)=\frac{\alpha}{t}$ and $\beta(t)=\gamma_0+\frac{\beta}{t}$]\label{cor1}
 Consider the context of Proposition \ref{abcd} in the case where $\gamma(t)=\frac{\alpha}{t}$ and $\beta(t)=\gamma_0+\frac{\beta}{t}$, where $\alpha>3,\gamma_0>0,\beta\geq 0$. Then choosing \begin{align*}
     a(t)&=t^2\pa{1+\frac{(\alpha-b)\gamma_0 t-\beta(\alpha+1-b)}{t^2-\alpha\gamma_0 t-\beta(\alpha+1)}},\\
     b(t)&=b\in(2,\alpha-1),\\
     c(t)&=t,\\
     d(t)&=b(\alpha-1-b),
 \end{align*}
 the system \eqref{systemabcd} is satisfied.\smallskip
 
 Given $x^{\star}\in\calS$ and $\sigma_{\infty}$ be such that $t\mapsto t\sigma_{\infty}(t)\in\Lp^2([t_0,+\infty[)$, then the following statements hold:
 \begin{enumerate}[label=(\roman*)]
      \item $\int_{t_0}^{+\infty} s\pa{f(X(s))-\min f+\Vert V(s)\Vert^2}ds<+\infty$ a.s..
      \item $\lim_{t\rightarrow +\infty}\Vert\nabla f(X(t))\Vert+\Vert V(t)\Vert=0$ a.s.. 
    
         \item $\mathbb{E}(f(X(t))-\min f+\Vert V(t)\Vert^2)=\mathcal{O}\left(\frac{1}{t^2}\right)$.
     \end{enumerate}

\end{corollary}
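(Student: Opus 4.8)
The plan is to follow the two-phase strategy of the proof of Corollary~\ref{corabcdd}: first verify algebraically that the stated quadruple $(a,b,c,d)$ solves the system \eqref{systemabcd} for $t$ beyond some threshold $t_1$, and then feed this solution, together with the polynomial asymptotics of the coefficients, into Proposition~\ref{abcd} to read off the three conclusions. As a preliminary I would record the auxiliary quantities attached to $\gamma(t)=\alpha/t$: one finds $p(t)=(t/t_0)^\alpha$, and since $\alpha>3$ ensures \eqref{H1}, $\Gamma(t)=p(t)\int_t^{+\infty}p(s)^{-1}ds=\frac{t}{\alpha-1}$; this is why the natural choice is $c(t)=t$, a fixed multiple of $\Gamma$. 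I would also note that $\beta(t)=\gamma_0+\beta/t$ is bounded and that $(\beta'-\gamma\beta+1)/\beta\to 1/\gamma_0$, so \eqref{H2} holds.

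For the algebraic verification I would substitute $c(t)=t$, $c'(t)=1$, $b(t)\equiv b$, $\gamma(t)=\alpha/t$, $\beta(t)=\gamma_0+\beta/t$ and $\beta'(t)=-\beta/t^2$ into \eqref{systemabcd}. Because $b'=0$ and $c'-c\gamma=1-\alpha$, the fifth equation collapses to $b(b+1-\alpha)+d=0$, which is exactly the prescribed $d=b(\alpha-1-b)$; the fourth is then $0\leq 0$ since $b,d$ are constant, and the sixth reads $t(b+1-\alpha)\leq 0$, true because $b<\alpha-1$. The third equation is linear in $a$ and fixes it as $a=\frac{c^2-bc\beta}{1-\gamma\beta+\beta'}$, and one checks that, after clearing denominators by $t^2$, this matches the stated rational expression; its numerator and denominator are positive once $t$ exceeds the largest root of $t^2-\alpha\gamma_0 t-(\alpha+1)\beta$, which simultaneously secures the second inequality $-a\beta\leq 0$. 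The one genuinely nontrivial relation is the first, $a'(t)\leq bt$: expanding $a(t)=t^2+(\alpha-b)\gamma_0 t+\mathcal{O}(1)$ gives $a'(t)=2t+\mathcal{O}(1)$, so $a'(t)\leq bt$ holds for all large $t$ precisely because $b>2$. This determines the threshold $t_1$ beyond which \eqref{systemabcd} holds.

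With a solution of \eqref{systemabcd} in hand, I would invoke the three parts of Proposition~\ref{abcd} after checking the integrability requirement $t\mapsto m(t)\sigma_\infty^2(t)\in\Lp^1([t_0,+\infty[)$ with $m(t)=\max\{1,a(t),c^2(t)\}\sim t^2$, which is exactly the assumed condition $t\sigma_\infty\in\Lp^2([t_0,+\infty[)$. For (i) the growth condition $bc-a'=\mathcal{O}(c(\gamma c-c'-b))$ reduces to comparing $(b-2)t+\mathcal{O}(1)$ with $(\alpha-1-b)t$, both linear with positive leading coefficient, hence the weighted integral $\int_{t_0}^{+\infty}s(f(X(s))-\min f+\Vert V(s)\Vert^2)ds$ is a.s. finite. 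For (ii), given any fixed $\eta>0$ I would find $\hat t$ beyond which $c(\gamma c-c'-b)=t(\alpha-1-b)\geq\eta$, $a(t)\beta(t)\sim\gamma_0 t^2\geq\eta$, and $\gamma(t)=\alpha/t\leq\eta$ all hold, yielding the a.s. vanishing of $\Vert V(t)\Vert$, of $\Vert\nabla f(X(t)+\beta(t)V(t))\Vert$, and hence of $\Vert\nabla f(X(t))\Vert$. For (iii), $d(t)=b(\alpha-1-b)>0$ is a positive constant, so with $D=d$ the proposition gives $\mathbb{E}\Vert V(t)\Vert^2=\mathcal{O}(t^{-2})$; evaluating the three competing terms of the objective bound --- $1/a(t)\sim t^{-2}$, $\beta(t)\sqrt{1+b^2}/(\sqrt{a(t)}c(t))\sim t^{-2}$, and $\beta^2(t)(1+b^2)/c^2(t)\sim t^{-2}$, all using $\beta(t)\to\gamma_0$ and $\sqrt{a(t)}\sim t$ --- shows their maximum is $\mathcal{O}(t^{-2})$. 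Adding the two estimates yields the claimed $\mathbb{E}(f(X(t))-\min f+\Vert V(t)\Vert^2)=\mathcal{O}(t^{-2})$.

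The main obstacle is confined to the algebraic phase, specifically the first inequality of \eqref{systemabcd}: unlike the constant-$\beta$ setting of Corollary~\ref{corabcdd}, here $a$ is a genuine rational function whose derivative must be dominated by $bt$ uniformly for large $t$, while one must simultaneously keep $a$ positive by staying beyond the roots of its denominator. Once the system is solved, the three applications of Proposition~\ref{abcd} amount to reading off linear and quadratic asymptotics and are routine.
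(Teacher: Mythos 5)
Your proposal is correct and follows essentially the same route as the paper: the paper's own proof is a one-line "plug the given $a,b,c,d$ into Proposition~\ref{abcd}" using the asymptotics $bt-a'(t)\geq \frac{(\alpha-3)t}{2}$ for large $t$, $a(t)\geq t^2$ and $0<\gamma_0<\beta(t)\leq\gamma_0+\beta/t_0$, while your explicit verification of \eqref{systemabcd} merely fills in what the paper defers to the cited deterministic references. One small remark: solving the third equation of \eqref{systemabcd} gives $a(t)=\frac{t^2(t^2-b\gamma_0 t-b\beta)}{t^2-\alpha\gamma_0 t-\beta(\alpha+1)}=t^2\Bigl(1+\frac{(\alpha-b)\gamma_0 t+\beta(\alpha+1-b)}{t^2-\alpha\gamma_0 t-\beta(\alpha+1)}\Bigr)$, so the minus sign in front of $\beta(\alpha+1-b)$ in the corollary's displayed $a(t)$ appears to be a typo; this only affects the $\mathcal{O}(1)$ term and changes none of the asymptotic conclusions.
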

\begin{proof}
Direct from replacing the specified $a,b,c,d,\gamma,\beta$ functions in Proposition \ref{abcd}, and the fact that for $t$ large enough $bt-a'(t)\geq \frac{(\alpha-3)t}{2}$, and also $a(t)\geq t^2, 0<\gamma_0<\beta(t)\leq \gamma_0+\frac{\beta}{t_0}.$
\end{proof}
\begin{remark}
    We can use the choices for $a,b,c,d$ presented in Corollaries \ref{corabcdd} and \ref{cor1} in Propositions \ref{abcdcomplete} and \ref{abcdexpcomplete} to obtain additional integral bounds, almost sure and in expectation properties of \eqref{ISIHD-S}. We leave this to the reader.
\end{remark}
\subsection{Strongly convex case}
In the following theorem, we consider the case where the objective function is strongly convex and we present a choice of parameters $\gamma$ and $\beta$ to obtain a fast linear convergence rate when the noise vanishes at a proper rate.
\begin{theorem}
Assume that $f:\H\rightarrow\R$ satisfies \eqref{H0}, and is $\mu$-strongly convex, $\mu > 0$, and denote $x^{\star}$ its unique minimizer. Suppose also that $\sigma$ obeys \eqref{H}. Let $\nu\geq 2$, consider the dynamic \eqref{ISIHD-S} with initial data $X_0\in\Lp^{\nu}(\Omega;\H)$. Consider also $\gamma\equiv 2\sqrt{\mu}$, and a constant $\beta$ such that $0\leq\beta\leq\frac{1}{2\sqrt{\mu}}$. Moreover, suppose that $\sigma_{\infty}$ is a non-increasing function such that $\sigma_{\infty}\in\Lp^2([t_0,+\infty[)$. Define the function $\mathcal{E}:[t_0,+\infty[\times\H\times\H\rightarrow\R_+$ as 
    \[
    \mathcal{E}(t,x,v)\eqdef f(x+\beta v)-\min f+\frac{1}{2}\Vert \sqrt{\mu}(x-x^{\star})+v\Vert^2. 
    \]
    Then, \eqref{ISIHD-S} has a unique solution $(X,V)\in S_{\H\times \H}^{\nu}[t_0]$. In addition, there exists positive constants $M_1,M_2$ such that 
    \[
    \mathbb{E}[\mathcal{E}(t,X(t),V(t))]\leq \mathcal{E}(t_0,X_0,V_0)e^{-\frac{\sqrt{\mu}}{2}(t-t_0)}+M_1e^{-\frac{\sqrt{\mu}}{4}(t-t_0)}+M_2\sigma_{\infty}^2\left(\frac{t_0+t}{2}\right),\quad\forall t>t_0.
    \]
    Let $\Theta:[t_0,+\infty[\rightarrow\R_+$ defined as $\Theta(t)\eqdef\max\{e^{-\frac{\sqrt{\mu}}{4}(t-t_0)},\sigma_{\infty}^2\left(\frac{t+t_0}{2}\right)\}$. Consequently, 
    \begin{align*}
        \mathbb{E}(f(X(t))-\min f)&=\mathcal{O}(\Theta(t)),\\
        \mathbb{E}(\Vert X(t))-x^{\star}\Vert^2)&=\mathcal{O}(\Theta(t)),\\
        \mathbb{E}(\Vert V(t)\Vert^2)&=\mathcal{O}(\Theta(t)),\\
        \mathbb{E}(\Vert \nabla f(X(t))\Vert^2)&=\mathcal{O}(\Theta(t)).
    \end{align*}
\end{theorem}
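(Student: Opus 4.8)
The plan is to run a stochastic Lyapunov argument directly on the energy $\mathcal E$, which (because $\gamma$ and $\beta$ are constant) carries no explicit time dependence, reduce the claim to a scalar integral inequality for $g(t)\eqdef\mathbb{E}[\mathcal{E}(X(t),V(t))]$, and close it with Grönwall. First I would settle well-posedness via Proposition~\ref{existencecor}: the constant choices $\gamma\equiv 2\sqrt{\mu}>0$ and $\beta$ constant trivially satisfy \eqref{H1} and \eqref{H2}, so \eqref{ISIHD-S} has a unique solution $(X,V)\in S_{\H\times\H}^{\nu}[t_0]$ with finite moments. Since $f\in C^2$ by \eqref{H0}, the map $(x,v)\mapsto \mathcal{E}(x,v)$ is $C^2$ and $t$-independent, so Proposition~\ref{itos} applies to $\mathcal E$.

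Next I would compute, writing $Y=x+\beta v$ and $w=\sqrt{\mu}(x-x^{\star})+v$, the derivatives $\nabla_x\mathcal E=\nabla f(Y)+\mu(x-x^{\star})+\sqrt{\mu}v$, $\nabla_v\mathcal E=\beta\nabla f(Y)+w$, and $\nabla_v^2\mathcal E=\beta^2\nabla^2 f(Y)+I_{\H}$. The It\^o correction term is bounded using $0\preceq\nabla^2 f\preceq L I_{\H}$ together with Remark~\ref{sigma*}: since $\sigma\sigma^{\star}\succeq 0$ is trace-class, $\tfrac12\tr[\sigma\sigma^{\star}\nabla_v^2\mathcal E]\le \tfrac12(\beta^2L+1)\Vert\sigma\Vert_{\HS}^2\le C\sigma_{\infty}^2(t)$ with $C\eqdef\tfrac12(\beta^2L+1)$. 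Feeding the derivatives into Proposition~\ref{itos} and cancelling terms, the drift of $\mathcal E$ along the dynamics equals $-2\sqrt{\mu}\beta\langle\nabla f(Y),v\rangle-\mu\langle x-x^{\star},v\rangle-\sqrt{\mu}\Vert v\Vert^2-\beta\Vert\nabla f(Y)\Vert^2-\sqrt{\mu}\langle x-x^{\star},\nabla f(Y)\rangle$.

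The crux is to show this drift is $\le -\tfrac{\sqrt{\mu}}{2}\mathcal E$. I would first rewrite $x-x^{\star}=Y-x^{\star}-\beta v$, which turns the last cross term into $-\sqrt{\mu}\langle Y-x^{\star},\nabla f(Y)\rangle$ and simultaneously changes the $\langle\nabla f(Y),v\rangle$ coefficient from $-2\sqrt{\mu}\beta$ to $-\sqrt{\mu}\beta$. I would then use strong convexity in its two valid forms $\langle\nabla f(Y),Y-x^{\star}\rangle\ge (f(Y)-\min f)+\tfrac{\mu}{2}\Vert Y-x^{\star}\Vert^2$ and $\langle\nabla f(Y),Y-x^{\star}\rangle\ge \mu\Vert Y-x^{\star}\Vert^2$. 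Averaging these two bounds with weights $\tfrac12,\tfrac12$ is exactly what makes the $f(Y)-\min f$ coefficient cancel against the $+\tfrac{\sqrt\mu}{2}(f(Y)-\min f)$ coming from $\tfrac{\sqrt\mu}{2}\mathcal E$, while leaving an extra $-\tfrac{3\mu^{3/2}}{4}\Vert Y-x^{\star}\Vert^2$. After completing the square in $\nabla f(Y)$ (so that $-\beta\Vert\nabla f(Y)\Vert^2-\sqrt\mu\beta\langle\nabla f(Y),v\rangle\le \tfrac{\mu\beta}{4}\Vert v\Vert^2$), what remains is a quadratic form in $(x-x^{\star},v)$; its negative semidefiniteness reduces, after the determinant test, to the scalar inequality $5-8u-3u^2\ge 0$ with $u=\sqrt{\mu}\beta\in[0,\tfrac12]$, which holds precisely because $\beta\le\tfrac{1}{2\sqrt{\mu}}$. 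This drift computation, and in particular identifying the right convex combination of the strong-convexity inequalities, is the step I expect to be the main obstacle.

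Finally, taking expectations kills the martingale (the integrability $\mathbb{E}\int_{t_0}^T\Vert\sigma^{\star}\nabla_v\mathcal E\Vert^2\,ds<+\infty$ follows from boundedness of $\sigma$ and the finite moments of $(X,V)$), yielding $g(t)\le g(t_0)-\tfrac{\sqrt\mu}{2}\int_{t_0}^t g(s)\,ds+C\int_{t_0}^t\sigma_{\infty}^2(s)\,ds$ and hence, by Grönwall, $g(t)\le g(t_0)e^{-\frac{\sqrt\mu}{2}(t-t_0)}+C\int_{t_0}^t e^{-\frac{\sqrt\mu}{2}(t-s)}\sigma_{\infty}^2(s)\,ds$. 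To get the stated three-term bound I would split the convolution at the midpoint $\tfrac{t_0+t}{2}$: on $[t_0,\tfrac{t_0+t}{2}]$ bound the exponential by $e^{-\frac{\sqrt\mu}{4}(t-t_0)}$ and the remaining integral by $\int_{t_0}^{+\infty}\sigma_{\infty}^2$, giving the $M_1e^{-\frac{\sqrt\mu}{4}(t-t_0)}$ term; on $[\tfrac{t_0+t}{2},t]$ use that $\sigma_{\infty}$ is non-increasing to pull out $\sigma_{\infty}^2(\tfrac{t_0+t}{2})$ and integrate the exponential, giving the $M_2\sigma_{\infty}^2(\tfrac{t_0+t}{2})$ term with $M_2=2C/\sqrt\mu$. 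The rate statements then follow from $g(t)=\mathcal O(\Theta(t))$: since $\tfrac12\Vert w\Vert^2\ge 0$ and $f(Y)-\min f\ge 0$, both are $\mathcal O(\Theta)$, whence $\mathbb E\Vert Y-x^{\star}\Vert^2=\mathcal O(\Theta)$ by strong convexity; the identity $(1-\beta\sqrt\mu)(X-x^{\star})=(Y-x^{\star})-\beta w$ with $1-\beta\sqrt\mu\ge\tfrac12$ then gives $\mathbb E\Vert X-x^{\star}\Vert^2=\mathcal O(\Theta)$ and, via $V=w-\sqrt\mu(X-x^{\star})$, $\mathbb E\Vert V\Vert^2=\mathcal O(\Theta)$; finally the descent lemma (Lemma~\ref{descent}) at $x^{\star}$ gives $f(X)-\min f\le\tfrac{L}{2}\Vert X-x^{\star}\Vert^2$ and Corollary~\ref{2L} gives $\Vert\nabla f(X)\Vert^2\le 2L(f(X)-\min f)$, closing all four rates.
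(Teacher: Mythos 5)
Your proposal is correct and follows the same overall architecture as the paper's proof: It\^o's formula applied to $\mathcal E$, a drift bound of the form $-\tfrac{\sqrt\mu}{2}\mathcal E$ plus a noise term of order $(L\beta^2+1)\sigma_\infty^2$, a comparison/Gr\"onwall step, the same midpoint split of the convolution integral to produce the $M_1 e^{-\frac{\sqrt\mu}{4}(t-t_0)}$ and $M_2\sigma_\infty^2(\tfrac{t_0+t}{2})$ terms, and the same linear-algebra recovery of the four individual rates from $\mathbb E[\mathcal E]=\mathcal O(\Theta)$ (the paper phrases the invertibility as ``$\beta\neq 1/\sqrt\mu$'', you make it explicit via $(1-\beta\sqrt\mu)(X-x^\star)=(Y-x^\star)-\beta w$). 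The one genuine difference is at the step you correctly identify as the crux: the paper does not verify the sign of the residual quadratic term $C(t)$ itself but delegates it to the deterministic result of Attouch--Fadili--Kungurtsev (\cite[Theorem~4.2]{hessianpert}), whereas you give a self-contained verification --- rewriting $x-x^\star=Y-x^\star-\beta v$, averaging the two strong-convexity inequalities with equal weights so the $f(Y)-\min f$ terms cancel, completing the square in $\nabla f(Y)$, and reducing negative semidefiniteness of the remaining form to the discriminant condition $5-8u-3u^2\ge 0$ for $u=\sqrt\mu\,\beta\in[0,\tfrac12]$; I checked this computation and it closes (the condition even leaves slack, since the positive root is $\approx 0.523$). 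A second, minor variant: on $[t_0,\tfrac{t_0+t}{2}]$ you bound the leftover integral by $\int_{t_0}^{+\infty}\sigma_\infty^2<+\infty$ using $\sigma_\infty\in\Lp^2$, while the paper bounds $\sigma_\infty^2(s)\le\sigma_\infty^2(t_0)$ by monotonicity and integrates the exponential; both hypotheses are available and both yield the stated constant $M_1$. Your route buys independence from the external deterministic theorem at the cost of a longer computation; the paper's is shorter but not self-contained.
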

\begin{proof}
     Using It\^o's formula with $\mathcal{E}$, taking expectation and denoting $E(t)\eqdef \mathbb{E}(\mathcal{E}(t,X(t),V(t)))$, we have \begin{align*}
         E(t)\leq E(t_0)-\int_{t_0}^t \frac{\sqrt{\mu}}{2}E(s)ds-\int_{t_0}^t C(s)ds+(L\beta^2+1)\int_{t_0}^t \sigma_{\infty}^2(s) ds, 
     \end{align*}
     where \begin{align*}
         C(t)&\eqdef \beta\Vert\nabla f(X(t)+\beta V(t))\Vert^2+\beta\sqrt{\mu}\langle \nabla f(X(t)+\beta V(t)),V(t) \rangle+\frac{\sqrt{\mu}}{2}(\beta^2\mu+1)\Vert V(t)\Vert^2\\
         &+\beta\mu\sqrt{\mu}\langle X(t)-x^{\star},V(t) \rangle +\frac{\mu\sqrt{\mu}}{4}\Vert X(t)-x^{\star}+\beta V(t)\Vert^2.
     \end{align*}
     It was proved in \cite[Theorem 4.2]{hessianpert} that under the condition $0\leq\beta\leq\frac{1}{2\sqrt{\mu}}$ we obtain that $C(t)$ is a non-negative function. Therefore, we can write the following $$E(t)\leq E(t_0)-\int_{t_0}^t \frac{\sqrt{\mu}}{2}E(s)ds+(L\beta^2+1)\int_{t_0}^t \sigma_{\infty}^2(s) ds. $$
     We continue by using \cite[Lemma A.2]{mio2}, to do this, we need to solve the following Cauchy problem: 
     \begin{align*}
    \centering
     \begin{cases}
         Y'(t)&=-\frac{\sqrt{\mu}}{2}Y(t)+(L\beta^2+1)\sigma_{\infty}^2(t)\\
         Y(t_0)&=\mathcal{E}(t_0,X_0,V_0).
     \end{cases}
     \end{align*}
     
    Using the integrating factor method, we deduce that for all $t\geq t_0$: \begin{align*}
        Y(t)&=Y(t_0)e^{\frac{\sqrt{\mu}}{2}(t_0-t)}+(L\beta^2+1)e^{-\frac{\sqrt{\mu}}{2}t}\int_{t_0}^t e^{\frac{\sqrt{\mu}}{2}s}\sigma_{\infty}^2(s)ds\\
        &\leq Y(t_0)e^{\frac{\sqrt{\mu}}{2}(t_0-t)}+(L\beta^2+1)e^{-\frac{\sqrt{\mu}}{2}t}\pa{\int_{t_0}^{\frac{t_0+t}{2}} e^{\frac{\sqrt{\mu}}{2}s}\sigma_{\infty}^2(s)ds+\int_{\frac{t_0+t}{2}}^t e^{\frac{\sqrt{\mu}}{2}s}\sigma_{\infty}^2(s)ds}\\
        &\leq Y(t_0)e^{\frac{\sqrt{\mu}}{2}(t_0-t)}+\tcb{\frac{2(L\beta^2+1)}{\sqrt{\mu}}}\sigma_{\infty}^2\pa{\frac{t_0+t}{2}}+\tcb{\frac{2(L\beta^2+1)}{\sqrt{\mu}}}\sigma_{\infty}^2(t_0) e^{\frac{\sqrt{\mu}}{4}(t_0-t)}\\
        &=\mathcal{O}(\Theta(t)).
    \end{align*}
     By \cite[Lemma A.2]{mio2}, we conclude that $E(t)=\mathcal{O}(\Theta(t))$, immediately we observe that \begin{align*}
         \mathbb{E}(f(X(t)+\beta V(t))-\min f)&=\mathcal{O}(\Theta(t))\\
         \mathbb{E}(\Vert\sqrt{\mu}(X(t)-x^{\star})+V(t)\Vert^2)&=\mathcal{O}(\Theta(t))
     \end{align*}
     By the strong convexity of $f$, we have that $\mathbb{E}(\Vert X(t)-x^{\star}+\beta V(t)\Vert^2)=\mathcal{O}(\Theta(t))$, since $\beta\neq\frac{1}{\sqrt{\mu}}$ ($\beta\leq \frac{1}{2\sqrt{\mu}}$), then $\mathbb{E}(\Vert X(t)-x^{\star}\Vert^2)=\mathbb{E}(\Vert V(t)\Vert^2)=\mathcal{O}(\Theta(t))$, on the other hand, using Lemma \ref{descent} and Lemma \ref{2L}, $\mathbb{E}(f(X(t))-f(X(t)+\beta V(t)))=\mathcal{O}(\Theta(t))$, thus, $$\mathbb{E}(f(X(t))-\min f )=\mathbb{E}(\Vert \nabla f(X(t))\Vert^2)=\mathcal{O}(\Theta(t)).$$
\end{proof}

\section{General \texorpdfstring{$\gamma$ and $\beta\equiv 0$}{}}\label{sec:nesterov0}
In this section we are going to study properties of the dynamic \eqref{ISIHD-S} in expectation and in almost sure sense, when the parameter $\gamma$ is a general function and $\beta\equiv 0$. The noiseless case and under deterministic noise is well documented in \cite{cabot}.\smallskip

Consider the dynamic \eqref{ISIHD-S} when $\beta\equiv 0$. This dynamic will be a stochastic version of the Hamiltonian formulation of \eqref{AVD} and it will be described by: 
\begin{align} \label{NAG}\tag{$\mathrm{IGS}_{\gamma}-\mathrm{S}$}
\begin{cases}
dX(t)&=V(t)dt,\hspace{0.1cm} t>t_0, \nonumber\\
dV(t)&=-\gamma(t)V(t)dt-\nabla f(X(t))dt+\sigma(t,X(t))dW(t),\hspace{0.1cm} t>t_0,\\
X(t_0)&=X_0,\quad V(t_0)=V_0. \nonumber
\end{cases}
\end{align}
The main motivation for a separate analysis is that, in Section \ref{sec:nesterov} we consider hypothesis \eqref{H2} to establish the existence and uniqueness of a solution, from which, the rest of the results follow. This hypothesis is incompatible with the case $\beta\equiv 0$.

\smallskip

We will demonstrate almost sure convergence of the velocity to zero and of the objective to its minimum value, under assumptions that are satisfied for $\gamma(t)=\frac{\alpha}{t^r}$, with $r\in [0,1], \alpha\geq 1-r$. Additionally, we will show that for this particular choice of $\beta$, we can obtain almost sure (weak) convergence of the trajectory.

\subsection{Minimization properties}
Let us define for $c>0$, $$\lambda_c(t)=\frac{p(t)}{c+\int_{t_0}^t p(s)ds}.$$
We can deduce that $\lambda_c'+\lambda_c^2-\gamma\lambda_c=0$, besides, since $p\notin\Lp^1([t_0,+\infty[)$, then $\lambda_c\notin\Lp^1([t_0,+\infty[)$. 
\begin{theorem}\label{minimization}
Assume that $f$ and $\sigma$ satisfy assumptions \eqref{H0} and \eqref{H}, respectively. Let $\nu\geq 2$, and consider the dynamic \eqref{NAG} with initial data $X_0,V_0\in\Lp^{\nu}(\Omega;\H)$. Then, there exists a unique solution $(X,V)\in S_{\H\times\H}^{\nu}[t_0]$ of \eqref{NAG}. Additionally, if $\sigma_{\infty}\in\Lp^2([t_0,+\infty[)$, then $$\int_{t_0}^{+\infty} \gamma(s)\Vert V(s)\Vert^2 ds<+\infty \quad a.s..$$ 
Moreover, suppose that
\begin{equation} \label{Ha}\tag{$\mathrm{H}_a$}
    \text{there exists } \hat{t}\geq t_0,\text{ and } c>0 \text{ such that } \gamma(t)\leq\lambda_c(t)\hspace{0.1cm} \forall t\geq \hat{t},
\end{equation} 
  and 
  \begin{equation}\label{Hb}\tag{$\mathrm{H}_b$}
      \int_{t_0}^{+\infty} \lambda_c(s)\Vert V(s)\Vert^2 ds<+\infty \quad a.s.. 
  \end{equation}
  Then the following properties are satisfied: 
  \begin{enumerate}[label=(\roman*)]
    \item $\int_{t_0}^{+\infty} \lambda_c(s)(f(X(s)-\min f) ds<+\infty$ a.s..
    \item $\lim_{t\rightarrow +\infty} \Vert V(t)\Vert=0$ a.s. and $\lim_{t\rightarrow +\infty} f(X(t))-\min f=0$ a.s..
\end{enumerate}
\end{theorem}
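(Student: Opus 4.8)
The plan is to build a stochastic Lyapunov analysis around the energy function $\mathcal{E}(t,x,v)\eqdef \lambda_c(t)\left(f(x)-\min f\right) + \tfrac{1}{2}\lambda_c^2(t)\Vert x-x^\star\Vert^2 + \tfrac{1}{2}\Vert v\Vert^2 + \lambda_c(t)\langle x-x^\star, v\rangle$ (or a close variant), which is the natural candidate suggested by the relation $\lambda_c'+\lambda_c^2-\gamma\lambda_c=0$ that was flagged just before the statement. The existence-and-uniqueness claim comes first: since $\beta\equiv 0$, hypothesis \eqref{H2} is unavailable, but the drift $(v,-\gamma(t)v-\nabla f(x))$ is globally Lipschitz in $(x,v)$ (using the $L$-Lipschitz gradient from \eqref{H0}) and $\sigma$ is Lipschitz and bounded by \eqref{H}, so the standard SDE well-posedness theory for \eqref{NAG} applies directly to give the unique solution $(X,V)\in S^\nu_{\H\times\H}[t_0]$. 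The integrability claim $\int_{t_0}^{+\infty}\gamma(s)\Vert V(s)\Vert^2 ds<+\infty$ a.s.\ should follow from applying It\^o's formula (Proposition \ref{itos}) to the ``deterministic-like'' energy $f(X(t))-\min f + \tfrac12\Vert V(t)\Vert^2$: the drift produces exactly a dissipation term $-\gamma(t)\Vert V(t)\Vert^2$, the martingale part has zero expectation after the usual localization argument, and the It\^o correction contributes $\tfrac12\tr[\Sigma]\le \tfrac12\sigma_\infty^2(t)$, which is integrable by assumption. A supermartingale-convergence (Robbins--Siegmund type) argument then yields the almost sure integrability.

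Next I would prove the two conclusions under \eqref{Ha} and \eqref{Hb}. For (i), I would apply It\^o's formula to $t\mapsto \lambda_c(t)\big(f(X(t))-\min f\big)$, or more robustly to the full Lyapunov function $\mathcal{E}$ above. The key algebraic point is that $\lambda_c'=\gamma\lambda_c-\lambda_c^2$, so differentiating $\lambda_c(f-\min f)$ produces the good term $-\lambda_c^2(f-\min f)$ together with $\gamma\lambda_c(f-\min f)$ and a cross term $\lambda_c\langle \nabla f(X),V\rangle$ coming from $\tfrac{d}{dt}f(X(t))$; the convexity inequality $f(X)-\min f\le \langle\nabla f(X),X-x^\star\rangle$ and Young's inequality are then used to absorb the indefinite terms. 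After taking expectations and using \eqref{Ha} to control $\gamma$ by $\lambda_c$, the surviving positive integrand is a multiple of $\lambda_c\big(f(X)-\min f\big)$, while the terms I cannot sign are bounded by $\lambda_c\Vert V\Vert^2$ (integrable a.s.\ by \eqref{Hb}) and by $\lambda_c\sigma_\infty^2$-type contributions from the diffusion. A Robbins--Siegmund / quasi-supermartingale argument then gives both that $\lim_{t\to\infty}\mathcal{E}$ exists a.s.\ and that $\int_{t_0}^{+\infty}\lambda_c(s)\big(f(X(s))-\min f\big)ds<+\infty$ a.s.

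For (ii), the limit $\lim_{t\to\infty}\Vert V(t)\Vert=0$ a.s.\ I would obtain by combining $\int\lambda_c\Vert V\Vert^2<+\infty$ (from \eqref{Hb}) with $\lambda_c\notin\Lp^1$, which forces $\liminf\Vert V(t)\Vert=0$, and then upgrading $\liminf$ to $\lim$ via the existence of $\lim\mathcal{E}$ and an estimate showing $\Vert V\Vert^2$ cannot oscillate persistently (controlling $\tfrac{d}{dt}\Vert V(t)\Vert^2$ through its It\^o expansion and the a.s.\ finite integral bounds). The convergence $\lim f(X(t))-\min f=0$ a.s.\ then follows because $\int\lambda_c(f-\min f)<+\infty$ with $\lambda_c\notin\Lp^1$ gives $\liminf(f(X(t))-\min f)=0$, and monotone-type control of the limiting energy plus $\Vert V\Vert\to 0$ pins down the full limit. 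The main obstacle I anticipate is the rigorous passage from $\liminf$ to $\lim$ in the almost sure sense: unlike the deterministic case one cannot simply invoke absolute continuity and a real-analysis lemma, so one must carefully handle the martingale increments, establish the requisite square-integrability so that the stochastic integrals are true martingales with vanishing expectation, and invoke an almost sure convergence theorem for nonnegative quasi-supermartingales to guarantee that the relevant energies converge rather than merely stay bounded.
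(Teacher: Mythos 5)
Your overall architecture is the same as the paper's: a first It\^o/Lyapunov step on $f(X)-\min f+\tfrac12\Vert V\Vert^2$ combined with a Robbins--Siegmund-type theorem (the paper's Theorem~\ref{impp}) to get $\int\gamma\Vert V\Vert^2<+\infty$ a.s.\ \emph{and} the a.s.\ existence of $\lim\big(f(X(t))-\min f+\tfrac12\Vert V(t)\Vert^2\big)$; a second anchored energy built on $\tfrac12\Vert\lambda_c(t)(x-x^\star)+v\Vert^2$ whose cross term is killed by $\lambda_c'+\lambda_c^2-\gamma\lambda_c=0$ and whose $-\lambda_c\langle\nabla f(X),X-x^\star\rangle$ term yields $-\lambda_c(f(X)-\min f)$ by convexity (with \eqref{Ha} used only to guarantee $\lambda_c'\le 0$ so the $\lambda_c\lambda_c'\Vert X-x^\star\Vert^2$ term can be discarded, and \eqref{Hb} used to make the leftover $+\lambda_c\Vert V\Vert^2$ an integrable increasing process); and finally Lemma~\ref{lim0} with $\lambda_c\notin\Lp^1$ to upgrade $\liminf$ to $\lim=0$. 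Your worry about the $\liminf\to\lim$ passage is resolved exactly this way --- no separate oscillation control of $\Vert V\Vert^2$ is needed, since the \emph{sum} $f(X)-\min f+\tfrac12\Vert V\Vert^2$ already has an a.s.\ limit from stage one.

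There is, however, a concrete flaw in the specific energy you propose. Writing $\mathcal{E}=\lambda_c(t)(f(x)-\min f)+\tfrac12\Vert\lambda_c(t)(x-x^\star)+v\Vert^2$, the It\^o drift contains $\lambda_c(t)\langle\nabla f(X),V\rangle$ from the chain rule on the weighted potential and $-\langle\nabla f(X),V\rangle$ from pairing $\nabla_v\mathcal{E}$ with the drift of $V$; these no longer cancel and leave $(\lambda_c(t)-1)\langle\nabla f(X),V\rangle$. Since $\lambda_c(t)\to 0$ in the cases of interest (e.g.\ $\gamma=\alpha/t$), absorbing this term by Young's inequality costs either $\lambda_c^{-1}\Vert V\Vert^2$ (not controlled by \eqref{Hb}, which only gives $\int\lambda_c\Vert V\Vert^2<+\infty$) or a multiple of $(f-\min f)$ with no decaying prefactor, which cannot be dominated by the available dissipation $-\lambda_c(f-\min f)$. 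Relatedly, your stated ``good term'' $-\lambda_c^2(f-\min f)$ coming from differentiating the weight is of the wrong order: it would only yield $\int\lambda_c^2(f(X)-\min f)<+\infty$, strictly weaker than claim (i) when $\lambda_c\to 0$. The fix is simply to drop the weight and use the paper's energy $\phi(t,x,v)=(f(x)-\min f)+\tfrac12\Vert\lambda_c(t)(x-x^\star)+v\Vert^2$, for which the $\langle\nabla f(X),V\rangle$ terms cancel exactly and the $-\lambda_c(f-\min f)$ dissipation comes entirely from convexity applied to $-\lambda_c\langle\nabla f(X),X-x^\star\rangle$.
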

\begin{proof}
The existence and uniqueness of a solution of \eqref{NAG} is a direct consequence of \cite[Theorem 3.3]{mio2} in the product space $\H\times \H$. \smallskip

Let $x^{\star}\in\calS$ and $\phi_0:(x,v)\mapsto \R$ defined by $\phi_0(x,v)=f(x)-\min f+\frac{\Vert v\Vert^2}{2}$, by It\^o's formula and Theorem~\ref{impp} we obtain that $\int_{t_0}^{+\infty} \gamma(s)\Vert V(s)\Vert^2 ds<+\infty$ a.s.. Moreover, if we assume the hypotheses \eqref{Ha} and \eqref{Hb}, then:
\begin{enumerate}[label=(\roman*)]
    \item Let $x^{\star}\in\calS$ and $\phi:(t,x,v)\mapsto \R$ defined by $\phi(t,x,v)=\frac{\Vert \lambda_c(t)(x-x^{\star})+v\Vert^2}{2}+(f(x)-\min f)$. Let $\hat{t}$ defined in the statement, by It\^o's formula from $\hat{t}$ to $t$, we have \begin{align*}
    &f(X(t))-\min f+\frac{\Vert \lambda_c(t)(X(t)-x^{\star})+V(t)\Vert^2}{2}=f(X(\hat{t}))-\min f+\frac{\Vert \lambda_c(\tcb{\hat{t}})(X(\hat{t})-x^{\star})+V(\hat{t})\Vert^2}{2}\\&+\int_{\hat{t}}^t\lambda_c(s)\lambda_c'(s)\Vert X(s))-x^{\star}\Vert^2-\gamma(t)\Vert V(s)\Vert^2-\lambda_c(t)\langle \nabla f(X(s)), X(s)-x^{\star} \rangle]ds\\
    &+ \int_{\hat{t}}^t \lambda_c(t)\Vert V(s)\Vert^2+\tr[\Sigma(s,X(s))]ds+\underbrace{\int_{\hat{t}}^t\langle [\lambda_c(s)(X(s)-x^{\star})+V(s)]\sigma^{\star}(s,X(s)), dW(s)\rangle}_{M_t}.
\end{align*}
By the hypotheses, we have that $$\int_{\hat{t}}^{+\infty} \left(\lambda_c(s)\Vert V(s)\Vert^2+\tr[\Sigma(s,X(s))]\right)ds\leq \int_{\hat{t}}^{+\infty} \left(\lambda_c(s)\Vert V(s)\Vert^2+\sigma_{\infty}^2(s)\right)ds<+\infty \quad  a.s..$$
Besides $(M_t)_{t\geq \hat{t}}$ is a continuous martingale. Moreover, by convexity of $f$ and the fact that $\lambda_c'(t)\leq 0$ $\forall t\geq \hat{t}$, \begin{align*}
    &\int_{\hat{t}}^t\lambda_c(s)\lambda_c'(s)\Vert X(s))-x^{\star}\Vert^2-\gamma(t)\Vert V(s)\Vert^2-\lambda_c(t)\langle \nabla f(X(s)), X(s)-x^{\star} \rangle]ds\\
    &\leq -\int_{\hat{t}}^t \lambda_c(s)(f(X(s))-\min f)ds.
\end{align*}
Then, by Theorem~\ref{impp}, \begin{equation}\label{lamf}
     \int_{\hat{t}}^{+\infty}\lambda_c(s)(f(X(s))-\min f)ds<+\infty \quad a.s.,
 \end{equation} and $\lim_{t\rightarrow +\infty}f(X(t))-\min f+\frac{\Vert \lambda_c(t)(X(t)-x^{\star})+V(t)\Vert^2}{2}$ exists a.s.. 
 \item By Lemma \ref{lim0} and \eqref{lamf}, we conclude that $\lim_{t\rightarrow +\infty} \frac{\Vert V(t)\Vert^2}{2}+f(X(t))-\min f=0$ a.s..
\end{enumerate}
\end{proof}
\begin{corollary}
Consider the context of Theorem \ref{minimization} with $\gamma(t)=\frac{\alpha}{t^r}$, where $r\in [0,1]$ and $\alpha> 1-r$. Then \eqref{Ha} and \eqref{Hb} are satisfied and thus the conclusions of Theorem~\ref{minimization} hold.
\end{corollary}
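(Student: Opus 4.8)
The plan is to verify that the two conditions \eqref{Ha} and \eqref{Hb} hold for the specific choice $\gamma(t)=\alpha/t^{r}$, after which the conclusions follow immediately from Theorem~\ref{minimization} (whose standing hypotheses, including $\sigma_\infty\in\Lp^2([t_0,+\infty[)$, are inherited through the phrase ``context of Theorem~\ref{minimization}''). The starting point is an explicit computation of $p$ and of the asymptotics of $\int_{t_0}^{t}p$. For $r\in[0,1)$ one has $\int_{t_0}^{t}\gamma(s)\,ds=\frac{\alpha}{1-r}(t^{1-r}-t_0^{1-r})$, so $p(t)=\exp\!\big(\frac{\alpha}{1-r}(t^{1-r}-t_0^{1-r})\big)$, while for $r=1$ one has $p(t)=(t/t_0)^{\alpha}$. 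The hypothesis $\alpha>1-r$ (which, since $r\le1$, forces $\alpha>0$) ensures $\int_{t_0}^{\infty}\gamma=+\infty$, hence $p(t)\to+\infty$ and $p\notin\Lp^{1}([t_0,+\infty[)$, consistent with the standing property used to define $\lambda_c$. Since $p'=\gamma p$, an integration by parts using $p=\frac{s^{r}}{\alpha}p'$ gives, for $r<1$, $\int_{t_0}^{t}p(s)\,ds=\frac{t^{r}}{\alpha}p(t)-\frac{t_0^{r}}{\alpha}-\frac{r}{\alpha}\int_{t_0}^{t}s^{r-1}p(s)\,ds$ (and the elementary primitive for $r=1$), in which the correction term is of lower order. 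Recalling $\frac{t^{r}}{\alpha}=1/\gamma(t)$, this yields the key equivalence $\int_{t_0}^{t}p\sim p(t)/\gamma(t)$ for $r<1$, and $\int_{t_0}^{t}p\sim \frac{\alpha}{\alpha+1}\,p(t)/\gamma(t)$ for $r=1$.

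To establish \eqref{Ha} I would avoid the asymptotics altogether and argue by monotonicity, which has the advantage of yielding the inequality on the whole interval $[t_0,+\infty[$. Set $h(t)\eqdef p(t)-\gamma(t)\big(c+\int_{t_0}^{t}p(s)\,ds\big)$, so that $\gamma(t)\le\lambda_c(t)$ is exactly $h(t)\ge0$. Differentiating and using $p'=\gamma p$, all terms involving $p$ cancel and one is left with $h'(t)=-\gamma'(t)\big(c+\int_{t_0}^{t}p(s)\,ds\big)$. Since $\gamma(t)=\alpha/t^{r}$ is non-increasing we have $\gamma'\le0$, hence $h'\ge0$ and $h$ is non-decreasing. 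Choosing $c\eqdef 1/\gamma(t_0)=t_0^{r}/\alpha>0$ gives $h(t_0)=p(t_0)-\gamma(t_0)c=1-1=0$, so $h\ge0$ on $[t_0,+\infty[$; equivalently, via the relation $\lambda_c'=\lambda_c(\gamma-\lambda_c)$, this says $\lambda_c'\le0$. This proves \eqref{Ha} with $\hat t=t_0$ and the explicit constant $c=t_0^{r}/\alpha$.

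For \eqref{Hb} I would keep the same $c$ and exploit the lower bound on $\int_{t_0}^{t}p$ furnished by the asymptotics of the first paragraph: there exist $\kappa>0$ and $\hat{t}'\ge t_0$ such that $\int_{t_0}^{t}p(s)\,ds\ge\kappa\,p(t)/\gamma(t)$ for all $t\ge\hat{t}'$, whence $\lambda_c(t)\le p(t)/\int_{t_0}^{t}p(s)\,ds\le \gamma(t)/\kappa$. Thus $\lambda_c\le C\gamma$ on $[\hat{t}',+\infty[$ with $C=1/\kappa$. The first part of Theorem~\ref{minimization}, already valid under $\sigma_\infty\in\Lp^2([t_0,+\infty[)$, provides $\int_{t_0}^{+\infty}\gamma(s)\Vert V(s)\Vert^2\,ds<+\infty$ a.s.; combining this with $\lambda_c\le C\gamma$ on $[\hat{t}',+\infty[$, and with the a.s. finiteness of $\int_{t_0}^{\hat{t}'}\lambda_c(s)\Vert V(s)\Vert^2\,ds$ (the trajectory $V$ being a.s. continuous, hence bounded on the compact $[t_0,\hat{t}']$), gives $\int_{t_0}^{+\infty}\lambda_c(s)\Vert V(s)\Vert^2\,ds<+\infty$ a.s., which is precisely \eqref{Hb}. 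With both \eqref{Ha} and \eqref{Hb} verified, Theorem~\ref{minimization} applies and its conclusions hold.

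The main obstacle is the two-sided comparison $\lambda_c\asymp\gamma$, that is, the sharp asymptotics of $\int_{t_0}^{t}p$. The monotonicity of $h$ disposes of \eqref{Ha} cleanly for every admissible $\alpha$, so the real work is the lower bound $\int_{t_0}^{t}p\gtrsim p/\gamma$ needed for \eqref{Hb}, a Laplace/Watson-type estimate. Writing $p=e^{\phi}$ with $\phi'=\gamma$, the integration by parts is governed by $\phi''/(\phi')^2=\gamma'/\gamma^2=-\tfrac{r}{\alpha}t^{r-1}$, which tends to $0$ for $r<1$ but not for $r=1$; controlling the passage to the limit is most delicate as $r\to1^{-}$, precisely where the leading constant in the equivalence jumps from $1$ (for $r<1$) to $\alpha/(\alpha+1)$ (at $r=1$). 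Once this comparison is secured, the remainder of the argument is routine.
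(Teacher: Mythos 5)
Your proof is correct, but it takes a genuinely different route from the paper's. For \eqref{Ha} the paper argues via an explicit change of variables: it identifies $\tfrac{1}{t\lambda_c(t)}$ (for a specific $c$) with a constant multiple of $I_{\frac{r}{1-r}}\bigl(\tfrac{\alpha}{1-r}t^{1-r}\bigr)$, where $I_p(t)=\int_0^1 e^{-tu}(1-u)^p\,du$, then uses the elementary bound $I_p(t)\leq t^{-1}$ together with $\alpha\geq 1-r$ to get $\gamma\leq\lambda_c$, and Watson's lemma ($I_p(t)\sim t^{-1}$) to get $\lambda_c\leq\Lambda_\varepsilon\gamma$ eventually, which yields \eqref{Hb} exactly as you do, by comparison with the a.s.\ finite integral $\int\gamma\Vert V\Vert^2$. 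Your monotonicity argument for \eqref{Ha} --- showing $h(t)=p(t)-\gamma(t)\bigl(c+\int_{t_0}^tp\bigr)$ is non-decreasing with $h(t_0)=0$ for $c=1/\gamma(t_0)$ --- is cleaner and strictly more general: it works for any non-increasing $\gamma$, gives the inequality on all of $[t_0,+\infty[$ rather than eventually, does not use $\alpha\geq 1-r$, and covers $r=0$ and $r=1$ uniformly (the paper treats $r=1$ by a separate explicit computation and does not address $r=0$). What the paper's route buys is the sharp asymptotic constant in $\lambda_c\sim C\gamma$. Two small points on your write-up: the lower bound $\int_{t_0}^tp\gtrsim p/\gamma$ that you need for \eqref{Hb} follows at once from your integration by parts once you bound the remainder via $s^{r-1}\leq t_0^{r-1}$ and absorb it into the left-hand side (worth one explicit line, since ``of lower order'' is doing the work there); and the worry in your last paragraph about the regime $r\to 1^-$ is moot, since $r$ is fixed and only a one-sided bound with \emph{some} constant $\kappa>0$ is required, not the exact leading constant.
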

\begin{proof}
\begin{itemize}
\item We will prove the case $r=1$ first, since it is direct, in such case, letting $c=\frac{t_0}{\alpha+1}$ we have that $\lambda_c(t)=\frac{\alpha+1}{t}$, which satisfies \eqref{Ha}, moreover, $\lambda(t)=\frac{\alpha+1}{\alpha}\gamma(t)$, so \eqref{Hb} is also satisfied.
\item Let $r\in ]0,1[$, $c=\frac{\int_{0}^{t_0}e^{\alpha s^{1-r}}ds}{e^{\alpha t_0^{1-r}}}$. Instead of proving $\gamma(t)\leq\lambda_c(t)$, we will prove the equivalent inequality $\frac{1}{t\lambda_c(t)}\leq \frac{1}{t\gamma(t)}$. In fact, by a change of variable we have that (see notation of $I_p$ in Lemma \ref{integral}): \begin{align*}
    \frac{1}{\lambda_c(t)t}&=\frac{(1-r)^{\frac{r}{1-r}}}{\alpha^{\frac{1}{1-r}}}I_{\frac{r}{1-r}}\left(\frac{\alpha}{1-r}t^{1-r}\right),
\end{align*}
Moreover, by the first result of Lemma \ref{integral} we have that $$\frac{1}{t\lambda_c(t)}\leq \left(\frac{1-r}{\alpha}\right)^{\frac{1}{1-r}}\frac{t^{r-1}}{\alpha}\leq\frac{1}{t\gamma(t)}.$$
where the last inequality comes from the fact that $1-r\leq \alpha$. Moreover, by the second result of Lemma \ref{integral}, we obtain that: $$\left(\frac{\alpha}{1-r}\right)^{\frac{1}{1-r}}\frac{1}{t\lambda_c(t)}\sim \frac{1}{t\gamma(t)}, \quad\text{ as $t\rightarrow+\infty$}.$$  This implies that for every $\varepsilon\in ]0,1[$ there exists $\hat{t}>t_0, \Lambda_{\varepsilon}\geq 1$ such that $\lambda_c(t)\leq \Lambda_{\varepsilon}\gamma(t)$ for every $t>\hat{t}$ $\left(\Lambda_{\varepsilon}=\left(\frac{\alpha}{1-r}\right)^{\frac{1}{1-r}}\frac{1}{(1-\varepsilon)}\right)$, this implies  \eqref{Hb}.
\end{itemize}
\end{proof}

\begin{remark}
  Finding all (or at least a larger class of) continuous functions $\gamma$ that satisfy \eqref{Ha} and for which one can prove \eqref{Hb} in general is an open problem.
\end{remark}

\subsection{Tighter convergence rates of the values}
In order to illustrate the context of the following result, it is useful to mention that if $\gamma(t)=\frac{\alpha}{t}$, then Theorem \ref{minimization} gives us minimization properties in the case $\alpha>0$. However, as mentioned in the introduction, it is widely known in the continuous deterministic setting \eqref{AVD}, with $\gamma(t)=\frac{\alpha}{t}$ and $\alpha>3$, then the values converge at the rate $o(1/t^2)$ (see \cite{cabot, faster1k2}). Based on \cite{cabot}, we will depict that effect for a general $\gamma$ in the continuous stochastic setting.

\smallskip

We will rephrase assumption \eqref{H0} on the objective $f$ to:
\begin{align}\label{H00}
\begin{cases}
\text{$f$ is convex and continuously differentiable with $L$-Lipschitz continuous gradient}; \\
\text{$f\in C^2(\H)$ or $\H$ is finite-dimensional;}\\
\calS \eqdef \argmin (f)\neq\emptyset. \tag{$\mathrm{H}_0^{\star}$} 
\end{cases}
\end{align} 
\eqref{H00} coincides with \eqref{H0} in the infinite-dimensional case, but is weaker than \eqref{H0} when $\H$ is finite-dimensional.

\begin{theorem}\label{fastconv}
Assume that $f,\sigma$ and $\gamma$ satisfy assumptions \eqref{H00}, \eqref{H} and \eqref{H1}-\eqref{H30}, respectively. Let $\nu\geq 2$, and consider the dynamic \eqref{NAG} with initial data $X_0,V_0\in\Lp^{\nu}(\Omega;\H)$. 
Then, there exists a unique solution $(X,V)\in S_{\H\times\H}^{\nu}[t_0]$ of \eqref{NAG}, for every $\nu\geq 2$.  Additionally, if $\Gamma\sigma_{\infty}\in\Lp^2([t_0,+\infty[)$, then:

 \begin{enumerate}[label=(\roman*)]
    \item $\int_{t_2}^{+\infty} \Gamma(t)(f(X(t))-\min f+\Vert V(t)\Vert^2)dt<+\infty$ a.s..
    \item \label{gamma-2} $f(X(t))-\min f+\Vert V(t)\Vert^2=o\left(\frac{1}{\Gamma^2(t)}\right)$ a.s..
    \item $\mathbb{E}(f(X(t))-\min f+\Vert V(t)\Vert^2)=\mathcal{O}\left(\frac{1}{\Gamma^2(t)}\right).$
     
 \end{enumerate}
Moreover, assume that $\Gamma\notin \Lp^1([t_0,+\infty[)$, and let $\theta(t)\eqdef \int_{t_0}^t \Gamma(s)ds$. If also $\theta\sigma_{\infty}^2\in \Lp^1([t_0,+\infty[)$, then:
\begin{enumerate}[label=(\roman*),resume]
    \item \label{gammaint} $f(X(t))-\min f+\Vert V(t)\Vert^2=o\left(\frac{1}{\theta(t)}\right)$ a.s..
    \item $\mathbb{E}(f(X(t))-\min f+\Vert V(t)\Vert^2)=\mathcal{O}\left(\min\Big\{\frac{\int_{t_0}^t \frac{ds}{\Gamma(s)}}{\theta(t)},\frac{1}{\Gamma^2(t)}\Big\}\right).$
\end{enumerate}
\end{theorem}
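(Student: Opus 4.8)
The plan is to run a stochastic Lyapunov analysis adapted to the scaling $\Gamma$, mirroring the Attouch--Cabot treatment of \eqref{AVD} but replacing every monotonicity argument by an almost-supermartingale (Robbins--Siegmund) argument through Theorem~\ref{impp}. Existence and uniqueness of $(X,V)\in S_{\H\times\H}^{\nu}[t_0]$ follow exactly as in Theorem~\ref{minimization}, rewriting \eqref{NAG} in the product space $\H\times\H$ and invoking \cite[Theorem~3.3]{mio2}; assumption \eqref{H00} (rather than \eqref{H0}) is precisely what lets us apply It\^o's formula to $f(X(\cdot))$, either directly when $f\in C^2(\H)$ or via the corresponding It\^o inequality in finite dimension. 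For the foundational estimate (i) I would use the energy
\[
\mathcal{E}(t,x,v)=\Gamma^2(t)(f(x)-\min f)+\tfrac12\Vert b(x-x^{\star})+\Gamma(t)v\Vert^2+\tfrac{b(1-b)}{2}\Vert x-x^{\star}\Vert^2,\quad b\in\,]2(m-1),1[,
\]
the $\beta\to0$ specialization of the functions in Corollary~\ref{corabcdd}, for which one checks directly that \eqref{systemabcd} holds (here $a'=2\Gamma\Gamma'$ and, using $\Gamma'=\gamma\Gamma-1\le m-1$ from \eqref{H30}, the coefficient $bc-a'=\Gamma(b-2\Gamma')\ge(b-2(m-1))\Gamma>0$). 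Applying Proposition~\ref{itos} to $\mathcal{E}$ and invoking convexity, the drift is bounded above by $-(b-2(m-1))\Gamma(t)\big(f(X)-\min f+\Vert V\Vert^2\big)\,dt+\tfrac12\Gamma^2(t)\tr[\Sigma]\,dt$ plus a martingale; since $\nabla_v^2\mathcal{E}=\Gamma^2 I_{\H}$ the It\^o correction is exactly $\tfrac12\Gamma^2\tr[\Sigma]\le\tfrac12\Gamma^2\sigma_\infty^2$, integrable precisely because $\Gamma\sigma_\infty\in\Lp^2$. Theorem~\ref{impp} then gives that $\mathcal{E}$ converges a.s. and $\int_{t_0}^{+\infty}\Gamma(f-\min f+\Vert V\Vert^2)<+\infty$ a.s., i.e. statement (i); taking expectations (the martingale drops out) bounds $\EE[\mathcal{E}(t)]=\mathcal{O}(1)$, and completing the square to see $\mathcal{E}\gtrsim\Gamma^2(f-\min f+\Vert V\Vert^2)$ yields (iii).

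The two almost-sure ``$o$'' statements (ii) and (iv) follow from one template once (i) is available, and this is where the stochastic argument departs from the deterministic one. In the noiseless case $W\eqdef f(x)-\min f+\tfrac12\Vert\dot x\Vert^2$ is non-increasing, which upgrades $\mathcal{O}$ to $o$ for free; under noise $W$ is no longer monotone. Instead I would apply It\^o directly to $\phi(t)W(t)$ with $W=f(X)-\min f+\tfrac12\Vert V\Vert^2$, taking $\phi=\Gamma^2$ for (ii) and $\phi=\theta$ for (iv). Using $dW=-\gamma\Vert V\Vert^2dt+\tfrac12\tr[\Sigma]dt+dM_t$ and the identity $\Gamma'-\gamma\Gamma=-1$, the drift of $\Gamma^2W$ is $2\Gamma\Gamma'(f-\min f)-\Gamma\Vert V\Vert^2+\tfrac12\Gamma^2\tr[\Sigma]$, while that of $\theta W$ is $\Gamma(f-\min f)+(\tfrac12\Gamma-\theta\gamma)\Vert V\Vert^2+\tfrac12\theta\tr[\Sigma]$. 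In both cases the positive part of the drift splits into a term $\propto\Gamma(f-\min f)$, integrable a.s. by (i), and a trace term integrable by $\Gamma\sigma_\infty\in\Lp^2$ (resp. $\theta\sigma_\infty^2\in\Lp^1$), while the velocity term is $\le0$ for large $t$ (for $\theta W$ one checks $\theta\gamma\ge\tfrac12\Gamma$ eventually, since $(\theta-\Gamma^2)'=\Gamma(1-2\Gamma')\ge(3-2m)\Gamma>0$ forces $\theta>\Gamma^2\ge\Gamma/\gamma$). Theorem~\ref{impp} then gives that $\Gamma^2W$ (resp. $\theta W$) converges a.s. to some $L\ge0$, and I conclude $L=0$ with Lemma~\ref{lim0}: $\int\tfrac1\Gamma(\Gamma^2W)=\int\Gamma W<+\infty$ with $1/\Gamma\notin\Lp^1$ (indeed $\Gamma=p\int_\cdot^{+\infty}p^{-1}$ gives $\tfrac{d}{dt}\ln\int_t^{+\infty}p^{-1}=-1/\Gamma$, so $\int_{t_0}^t1/\Gamma\to+\infty$), resp. $\int\tfrac\Gamma\theta(\theta W)=\int\Gamma W<+\infty$ with $\Gamma/\theta=\theta'/\theta\notin\Lp^1$ as $\Gamma\notin\Lp^1$.

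For the expectation bound (v) I would take expectations in the It\^o expansion of $\theta W$: the martingale vanishes, the velocity term is dropped (being $\le0$ for large $t$), and integration gives $\theta(t)\EE[W(t)]\le C+\int_{t_0}^t\Gamma\,\EE[f(X)-\min f]+\tfrac12\int_{t_0}^t\theta\sigma_\infty^2$. The last integral is finite by $\theta\sigma_\infty^2\in\Lp^1$, and bounding the middle integrand by the rate $\EE[f(X)-\min f]=\mathcal{O}(\Gamma^{-2})$ from (iii) produces $\int_{t_0}^t\Gamma\cdot\Gamma^{-2}=\int_{t_0}^t\Gamma^{-1}$, whence $\EE[W(t)]=\mathcal{O}\big(\theta(t)^{-1}\int_{t_0}^t\Gamma^{-1}\big)$; combining with the $\mathcal{O}(\Gamma^{-2})$ bound of (iii) gives the stated minimum, and $f-\min f+\Vert V\Vert^2$ is recovered from $W$ up to constants.

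The main obstacle is exactly the loss of monotonicity of $W$ under noise: the clean deterministic upgrade from $\mathcal{O}$ to $o$ is unavailable, and a naive supermartingale comparison fails because the drifts of $\Gamma^2W$ and $\theta W$ contain a genuinely positive term $\propto\Gamma(f-\min f)$. The resolving idea is to treat that term not as an error to be dominated but as an \emph{a priori integrable} quantity furnished by estimate (i), so that the scaled energies still fit the framework of Theorem~\ref{impp}; the conclusion $L=0$ then rests entirely on the non-integrability of the reciprocal weights $1/\Gamma$ and $\Gamma/\theta$, which is structural (from $\Gamma=p\int_\cdot^{+\infty}p^{-1}$ and from $\Gamma\notin\Lp^1$, respectively). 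A secondary technical point to watch is the a.s. validity of It\^o's formula for $f(X)$ under the weakened hypothesis \eqref{H00} in the finite-dimensional regime.
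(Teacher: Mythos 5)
Your proposal matches the paper's proof essentially line for line: the same three Lyapunov functions ($\mathcal{E}$ with $b\in\,]2(m-1),1[$, then $\Gamma^2 W$ and $\theta W$ for $W=f(X)-\min f+\tfrac12\Vert V\Vert^2$), the same use of Theorem~\ref{impp} to convert the a priori integrable positive drift terms into a.s.\ convergence of the scaled energies, the same appeal to Lemma~\ref{lim0} together with the non-integrability of $1/\Gamma$ and $\Gamma/\theta$, and the same expectation arguments for the $\mathcal{O}$ rates. The only (harmless) deviation is in item \iref{gammaint}, where you argue $\theta\gamma\ge\Gamma/2$ eventually via $\gamma\Gamma\ge 1$ --- a bound not guaranteed by \eqref{H1}--\eqref{H30} alone --- but this detour is unnecessary since $\Gamma\Vert V\Vert^2$ is already a.s.\ integrable by your own item (i), which is exactly how the paper handles that term.
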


\begin{remark}
The claim \ref{gamma-2} is new even in the deterministic case (\ie $\sigma(\cdot,\cdot)=0_{\K;\H}$). According to the first three items of the previous theorem, the conclusions of Remark \ref{remarkimp} are also valid, this is that when $\gamma(t)=\frac{\alpha}{t}$ with $\alpha>3$ and $t\sigma_{\infty}(t)\in\Lp^2([t_0,+\infty[)$, the previous theorem ensures fast convergence of the values, $\ie, \mathcal{O}(t^{-2})$ in expectation and $o(t^{-2})$ in almost sure sense. Besides, by Corollary \ref{atr}, when $\gamma(t)=\frac{\alpha}{t^r}$ with $r\in ]0,1[,\alpha\geq 1-r, $ and $t^r\sigma_{\infty}(t)\in\Lp^2([t_0,+\infty[)$, the previous theorem ensures convergence of the objective at a rate $\mathcal{O}\left(t^{-2r}\right)$ in expectation and $o\left(t^{-2r}\right)$ in almost sure sense, moreover by \eqref{gammaint} the convergence rate is actually $o\left(t^{-(r+1)}\right)$ in almost sure sense, which is faster than $o\left(t^{-2r}\right)$. Regarding \eqref{gammaint}, this can be seen as the extension of \cite[Theorem 3.6]{cabot} to the stochastic setting.   
\end{remark}

\begin{proof}
\begin{enumerate}[label=(\roman*)]
    \item Let $m<\frac{3}{2}$ and $t_2$ defined in \eqref{H30}, let also $b\in ]2(m-1),1[$ and $x^{\star}\in\calS$. Based on \eqref{systemabcd} with $\beta\equiv 0$,  we introduce $\phi_1:(t,x,v)\mapsto \R$ defined by 
    \[
    \phi_1(t,x,v)=\Gamma^2(t)(f(x)-\min f)+\frac{\Vert b(x-x^{\star})+\Gamma(t)v\Vert^2}{2}+\frac{b(1-b)}{2}\Vert x-x^{\star}\Vert^2.
    \] 
    Since $f\in C^2(\H),$ we use It\^o's formula from $t_2$ to $t$ to get
    \begin{equation}\label{lyapbeta0}
    \begin{aligned}
    \phi_1(t,X(t),V(t))&=\phi_1(t_2,X(t_2),V(t_2))+\int_{t_2}^t\Gamma(s)[2\Gamma'(s)(f(X(s))-\min f)-b\langle \nabla f(X(s)), X(s)-x^{\star} \rangle]ds\\
    &+(b-1)\int_{t_2}^t\Gamma(s)\Vert V(s)\Vert^2ds+\int_{t_2}^t \Gamma^2(s)\tr[\Sigma(s,X(s))]ds
    \\
    &+\underbrace{\int_{t_2}^t\langle [\Gamma^2(s)V(s)+b\Gamma(s)(X(s)-x^{\star})]\sigma^{\star}(s,X(s)), dW(s)\rangle}_{M_t}.
    \end{aligned}  
\end{equation}
When $\H$ is finite-dimensional but $f$ is not $C^2(\H)$, we can use mollifiers as in \cite[Proposition~C.2]{mertikopoulos_staudigl_2018}, and get \eqref{lyapbeta0} as an inequality in this case.

\smallskip

Besides, we have that $$\int_{t_2}^{+\infty} \Gamma^2(s)\tr[\Sigma(s,X(s))]ds\leq \int_{t_2}^{+\infty} \Gamma^2(s)\sigma_{\infty}^2(s)ds<+\infty.$$
Besides $(M_t)_{t\geq t_2}$ is a continuous martingale. Moreover, by convexity of $f$, we have that 
\begin{align*}
&\int_{t_2}^t\Gamma(s)[2\Gamma'(s)(f(X(s))-\min f)-b\langle \nabla f(X(s)), X(s)-x^{\star} \rangle]ds\\
&\leq \int_{t_2}^t\Gamma(s)(2\Gamma'(s)-b)(f(X(s))-\min f)ds.
\end{align*}
Since $b-1<0$, and $$2\Gamma'(t)-b=2\gamma(t)\Gamma(t)-2-b\leq 2(m-1)-b<0, \quad \forall t> t_2. $$ By Theorem~\ref{impp}, \begin{equation}\label{gfv}
    \int_{t_2}^{+\infty}\Gamma(s)(f(X(s))-\min f+\Vert V(s)\Vert^2)ds<+\infty \quad a.s.,
\end{equation} and 
$$\lim_{t\rightarrow +\infty}\Gamma^2(t)(f(X(t))-\min f)+\frac{\Vert b(X(t)-x^{\star})+\Gamma(t)V(t)\Vert^2}{2}+\frac{b(1-b)}{2}\Vert X(t)-x^{\star}\Vert^2 \text{ exists a.s..}$$ 

\item On the other hand, let $\phi_2:(t,x,v)\mapsto \R$ defined by $\phi_2(t,x,v)=\Gamma^2(t)\left(f(x)-\min f +\frac{\Vert v\Vert^2}{2}\right)$. Recalling the discussion for $\phi_1$, we get that by It\^o's formula from $t_2$ to $t$, we have  \begin{equation}\label{lyap2beta0}
\begin{aligned}
      \phi_2(t,X(t),V(t))&=\phi_2(t_2,X(t_2),V(t_2))+\int_{t_2}^t 2\Gamma(s)\Gamma'(s) (f(X(s))-\min f) ds\\
      &-\int_{t_2}^t \Gamma(s)\Vert V(s)\Vert^2ds+\int_{t_2}^t \Gamma^2(s)\tr[\Sigma(s,X(s))]ds\\
    &+\underbrace{\int_{t_2}^t\Gamma^2(s)\langle V(s)\sigma^{\star}(s,X(s)),dW(s)\rangle}_{M_t}.
\end{aligned}
\end{equation}
And also, that
\begin{align*}
&\int_{t_2}^{+\infty} 2\Gamma(s)\Gamma'(s) (f(X(s))-\min f)+\Gamma^2(s)\tr[\Sigma(s,X(s))]ds \\
&\leq\int_{t_2}^{+\infty} \Gamma(s)(f(X(s))-\min f)+\Gamma^2(s)\sigma_{\infty}^2(s)ds<+\infty \quad a.s..
\end{align*}
Besides $(M_t)_{t\geq t_2}$ is a continuous martingale. By Theorem~\ref{impp}, we get again that $\int_{t_2}^{+\infty}\Gamma(s)\Vert V(s)\Vert^2 ds<+\infty$ a.s. and that \begin{equation}\label{ggfv}
    \lim_{t\rightarrow +\infty}\Gamma^2(t)\left(f(X(t))-\min f+\frac{\Vert V(t)\Vert^2}{2}\right) \text{ exists a.s. }
    \end{equation}
\smallskip

 Let us recall that $\frac{1}{\Gamma}\notin\Lp^1([t_0,+\infty[)$ by Lemma \ref{1gam}. Therefore, by \eqref{gfv} and \eqref{ggfv}, we can use Lemma \ref{lim0} to obtain that $$\lim_{t\rightarrow +\infty}\Gamma^2(t)\left(f(X(t))-\min f+\frac{\Vert V(t)\Vert^2}{2}\right)=0 \quad a.s..$$

\item Taking expectation on \eqref{lyapbeta0} and denoting $$K_1\eqdef\Gamma^2(t_2)\mathbb{E}((f(X(t_2))-\min f))+\frac{1}{2}\mathbb{E}(\Vert b(X(t_2)-x^{\star})+\Gamma(t_2)V(t_2)\Vert^2)+\frac{b(1-b)}{2}\Vert X(t_2)-x^{\star}\Vert^2,$$ $$K_{\Gamma}\eqdef\int_{t_2}^{+\infty}\Gamma^2(s)\sigma_{\infty}^2(s)ds,$$ we obtain directly that $$\mathbb{E}\left(\Gamma^2(t)(f(X(t))-\min f)+\frac{\Vert b(X(t)-x^{\star})+\Gamma(t)V(t)\Vert^2}{2}+\frac{b(1-b)}{2}\Vert X(t)-x^{\star}\Vert^2\right)\leq K_1+K_{\Gamma}.$$
From this, is direct that $\sup_{t\geq t_2}\mathbb{E}(\Vert X(t)-x^{\star}\Vert^2)<+\infty$, and this in turn imply
$$\mathbb{E}\left(f(X(t))-\min f+\frac{\Vert V(t)\Vert^2}{2}\right)=\mathcal{O}\left( \frac{1}{\Gamma^2(t)}\right).$$
\item Moreover, assume that $\Gamma\notin \Lp^1([t_0,+\infty[)$, and let $\theta(t)=\int_{t_0}^t \Gamma(s)ds$. If also $\theta\sigma_{\infty}^2\in \Lp^1([t_0,+\infty[)$, then we consider $\phi_3(t,x,v)=\theta(t)\left(f(x)-\min f+\frac{\Vert v\Vert^2}{2}\right)$, by It\^o's formula from $t_2$ to $t$, we get \begin{equation}\label{itointegral}
    \begin{aligned}
        \phi_3(t,X(t),V(t))&=\phi_3(t_2,X(t_2),V(t_2))+\int_{t_2}^t \Gamma(s)\left(f(X(s))-\min f+\frac{\Vert V(s)\Vert^2}{2}\right)ds\\
        &-\int_{t_2}^t \gamma(s)\theta(s)\Vert V(s)\Vert^2ds+\frac{1}{2}\int_{t_2}^t \theta(s)\tr[\Sigma(s,X(s))]ds\\
        & +\underbrace{\int_{t_2}^t\theta(s)\langle V(s)\sigma^{\star}(s,X(s)),dW(s)\rangle}_{M_t}.
    \end{aligned}
\end{equation}
Also, by the first item and new hypothesis on the diffusion term, we get that \begin{equation}
    \begin{aligned}
        &\int_{t_2}^t \Gamma(s)\left(f(X(s))-\min f+\frac{\Vert V(s)\Vert^2}{2}\right)+\theta(s)\tr[\Sigma(s,X(s))]ds\\
        &\leq \int_{t_2}^{+\infty} \Gamma(s)\left(f(X(s))-\min f+\frac{\Vert V(s)\Vert^2}{2}\right)+\theta(s)\sigma_{\infty}^2(s)ds<+\infty.
    \end{aligned}
\end{equation}
Besides $(M_t)_{t\geq t_2}$ is a continuous martingale. By Theorem~\ref{impp}, we get that $\int_{t_2}^{+\infty}\gamma(s)\theta(s)\Vert V(s)\Vert^2 ds<+\infty$ a.s. and that \begin{equation}
    \lim_{t\rightarrow +\infty}\theta(t)\left(f(X(t))-\min f+\frac{\Vert V(t)\Vert^2}{2}\right) \text{ exists a.s., }
    \end{equation}
    Using Lemma \ref{intgam} with $q(t)=\theta(t)$, we get that $\frac{\Gamma}{\theta}\notin \Lp^1([t_2,+\infty[)$. Besides, recalling that $$\int_{t_2}^{+\infty} \Gamma(s)\left(f(X(s))-\min f+\frac{\Vert V(s)\Vert^2}{2}\right)<+\infty, \quad a.s.,$$
    we invoke Lemma \ref{lim0} to conclude that $\lim_{t\rightarrow +\infty} \theta(t)\left(f(X(t))-\min f+\frac{\Vert V(t)\Vert^2}{2}\right)=0$ a.s..
    \item Taking expectation in \eqref{itointegral} and upper bounding we get \begin{equation}
    \begin{aligned}
        \mathbb{E}(\phi_3(t,X(t),V(t)))&\leq \mathbb{E}(\phi_3(t_2,X(t_2),V(t_2)))+\int_{t_2}^t \Gamma(s)\mathbb{E}\left(f(X(s))-\min f+\frac{\Vert V(s)\Vert^2}{2}\right)ds\\
        &+\frac{1}{2}\int_{t_2}^{+\infty} \theta(s)\sigma_{\infty}^2(s)ds.
    \end{aligned}
\end{equation}
By the third item, we have that $\mathbb{E}\left(f(X(s))-\min f+\frac{\Vert V(s)\Vert^2}{2}\right)=\mathcal{O}\left(\frac{1}{\Gamma^2(s)}\right)$, so we conclude that \begin{equation}
        \mathbb{E}\left(\theta(t)\left(f(X(t))-\min f+\frac{\Vert V(t)\Vert^2}{2}\right)\right)=\mathcal{O}\left( \int_{t_2}^t \frac{ds}{\Gamma(s)}\right). 
\end{equation}
Thus, $$\mathbb{E}\left(f(X(t))-\min f+\frac{\Vert V(t)\Vert^2}{2}\right)=\mathcal{O}\left(\min\Big\{\frac{\int_{t_2}^t \frac{ds}{\Gamma(s)}}{\theta(t)},\frac{1}{\Gamma^2(t)}\Big\}\right).$$
\end{enumerate}
\end{proof}
\subsection{Almost sure weak convergence of trajectories}
In the deterministic setting with $\alpha>3$, it is also well-known that one can obtain weak convergence of the trajectory. Our aim in this section is to show this claim for a general $\gamma$ in the stochastic setting.
\begin{theorem}\label{almostsureweak}
    Consider the setting of Theorem \ref{fastconv}. Then, if $\Gamma\sigma_{\infty}\in \Lp^2([t_0,+\infty[)$ we have that:
    \begin{enumerate}[label=(\roman*)]
        \item $ \mathbb{E}\left[\sup_{t\geq t_2}\Vert X(t)\Vert^{\nu}\right]<+\infty$.
        \item \label{iiconv0} $\forall x^{\star}\in\calS$, $\lim_{t\rightarrow +\infty} \Vert X(t)-x^{\star}\Vert$ exists a.s.. 
      \vspace{1mm}
     \item  
     If $\gamma$ is non-increasing, there exists an $\mathcal{S}-$valued random variable $X^{\star}$ such that $\wlim_{t\rightarrow +\infty} X(t) = X^{\star}$ a.s..
    \end{enumerate}
\end{theorem}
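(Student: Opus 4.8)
The plan is to establish the three items in order, working throughout on the almost-sure event furnished by Theorem~\ref{fastconv}, on which $\int_{t_2}^{+\infty}\Gamma(s)\big(f(X(s))-\min f+\|V(s)\|^2\big)ds<+\infty$, $\EE\|V(t)\|^2=\mathcal{O}(\Gamma^{-2}(t))$, $\sup_{t}\EE\|X(t)-x^{\star}\|^2<+\infty$, and (since $\Gamma(t)\to+\infty$) $f(X(t))\to\min f$ almost surely. I fix $x^{\star}\in\calS$ and set $h(t)\eqdef\tfrac12\|X(t)-x^{\star}\|^2$; because $dX=V\,dt$, the process $h$ is absolutely continuous with $h'(t)=\langle X(t)-x^{\star},V(t)\rangle$, so $\|X-x^{\star}\|$ carries no martingale part of its own and the inertia enters only through $h'$.

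For item (i) I would run a Lyapunov estimate with the energy $\phi_1$ from the proof of Theorem~\ref{fastconv}, namely $\phi_1(t,x,v)=\Gamma^2(t)(f(x)-\min f)+\tfrac12\|b(x-x^{\star})+\Gamma(t)v\|^2+\tfrac{b(1-b)}2\|x-x^{\star}\|^2$. Itô's formula and convexity give $d\phi_1\le\Gamma^2\tr[\Sigma]\,dt+d\mathcal{M}_1$ with $\mathcal{M}_1(t)=\int_{t_2}^t\langle[\Gamma^2V+b\Gamma(X-x^{\star})]\sigma^{\star},dW\rangle$, and
\[
\EE[\langle\mathcal{M}_1\rangle_\infty]\le 2\int_{t_2}^{+\infty}\sigma_\infty^2\big(\Gamma^4\,\EE\|V\|^2+b^2\Gamma^2\,\EE\|X-x^{\star}\|^2\big)ds<+\infty,
\]
the finiteness coming precisely from $\EE\|V\|^2=\mathcal{O}(\Gamma^{-2})$, $\sup_t\EE\|X-x^{\star}\|^2<+\infty$ and $\Gamma\sigma_\infty\in\Lp^2$. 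Taking the supremum in $t$, applying Burkholder--Davis--Gundy to control $\EE[\sup_t|\mathcal{M}_1|]$, and using $\phi_1\ge\tfrac{b(1-b)}2\|X-x^{\star}\|^2$, yields $\EE[\sup_{t\ge t_2}\|X(t)-x^{\star}\|^2]<+\infty$; the same scheme raised to the power $\nu/2$ (BDG with exponent $\nu/2$, and $X_0,V_0\in\Lp^\nu$) gives the claim for general $\nu$.

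For item (ii), starting from $h'(t)=\langle X-x^{\star},V\rangle$ I would compute, via Itô, convexity of $f$, and $p'=\gamma p$,
\[
d(p\,h')=p\big(\|V\|^2-\langle\nabla f(X),X-x^{\star}\rangle\big)dt+p\,\langle\sigma^{\star}(X-x^{\star}),dW\rangle\le p\|V\|^2\,dt+dN,
\]
where $N(t)=\int_{t_2}^t p\,\langle\sigma^{\star}(X-x^{\star}),dW\rangle$. Integrating, dividing by $p$, integrating again in time and using Fubini, the deterministic contributions converge: $\int\tfrac1p<+\infty$ by \eqref{H1}, while $\int_{t_2}^{+\infty}\tfrac1{p(s)}\!\int_{t_2}^s p(u)\|V(u)\|^2du\,ds=\int_{t_2}^{+\infty}\Gamma(u)\|V(u)\|^2du<+\infty$, and the convexity term contributes a nonincreasing quantity which, together with $h\ge0$, must converge. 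Thus $h(t)$ converges almost surely provided the martingale contribution $\int_{t_2}^{t}\tfrac{N(s)}{p(s)}ds$ converges. By stochastic Fubini this equals $\widetilde{M}(t)-\tfrac{\Gamma(t)}{p(t)}N(t)$ with $\widetilde{M}(t)=\int_{t_2}^t\Gamma\,\langle\sigma^{\star}(X-x^{\star}),dW\rangle$; since $\langle\widetilde{M}\rangle_\infty\le(\sup_t\|X-x^{\star}\|^2)\int_{t_2}^{+\infty}\Gamma^2\sigma_\infty^2<+\infty$ almost surely (by item (i) and $\Gamma\sigma_\infty\in\Lp^2$), $\widetilde{M}$ converges almost surely. \emph{The crux, and the step I expect to be the main obstacle, is showing $\tfrac{\Gamma(t)}{p(t)}N(t)\to0$ almost surely.} Here I would exploit that $\tfrac{\Gamma}{p}=\int_t^{+\infty}\tfrac1p$ is nonincreasing, so that $\tfrac{\Gamma^2(t)}{p^2(t)}\langle N\rangle(t)\le(\sup_t\|X-x^{\star}\|^2)\int_{t_2}^t\Gamma^2\sigma_\infty^2$ stays bounded and in fact tends to $0$; combined with the Dambis--Dubins--Schwarz time change and the strong-law/iterated-logarithm control of $N$ on $\{\langle N\rangle_\infty=+\infty\}$ (and outright convergence of $N$ on $\{\langle N\rangle_\infty<+\infty\}$, where $\tfrac\Gamma p\to0$), this forces $\tfrac\Gamma p N\to0$. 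Hence $\lim_{t\to+\infty}\|X(t)-x^{\star}\|$ exists almost surely.

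For item (iii), I would invoke the almost-sure Opial lemma (as in \cite{mio}). Taking a countable dense subset $\{z_k\}\subset\calS$ (here $\H$ separable is used), item (ii) gives a single full-measure event on which $\lim_t\|X(t)-z_k\|$ exists for every $k$; using the almost-sure boundedness of $X$ from item (i), this extends to $\lim_t\|X(t)-z\|$ for every $z\in\calS$. On the other hand, on the event $\{f(X(t))\to\min f\}$ any weak sequential cluster point $\bar{x}$ of $(X(t))_t$ satisfies $f(\bar{x})\le\liminf_n f(X(t_n))=\min f$ by weak lower semicontinuity of the convex continuous $f$, whence $\bar{x}\in\calS$. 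Boundedness guarantees such cluster points exist, and Opial's lemma then yields an $\calS$-valued random variable $X^{\star}$ with $\wlim_{t\to+\infty}X(t)=X^{\star}$ almost surely; the assumption that $\gamma$ is nonincreasing is used to guarantee, via \cite[Corollary~2.3]{AC1}, that $\Gamma$ is nondecreasing, so that the monotonicity exploited in item (ii) holds on a common almost-sure event.
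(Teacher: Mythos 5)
Your items (i) and (iii) are fine and essentially match the paper: (i) is the Lyapunov-plus-Burkholder--Davis--Gundy argument that the paper delegates to \cite[Theorem~3.1]{mio}, and (iii) is the same Opial-lemma argument (with the caveat that the monotonicity of $\gamma$ is used to get $f(X(t))\to\min f$ a.s.\ from the $o(1/\Gamma^2)$ rate, since $\Gamma$ need not tend to $+\infty$ under the hypotheses of Theorem~\ref{fastconv} alone --- e.g.\ constant $\gamma$ gives constant $\Gamma$). The problem is item (ii), where you take a genuinely different route and the step you yourself flag as the crux is a real gap. On the event $\{\langle N\rangle_\infty=+\infty\}$ (which does occur for natural choices, e.g.\ $\gamma(t)=\alpha/t$ with $\sigma_\infty^2(t)\asymp t^{-3-\epsilon}$, where $p^2\sigma_\infty^2\asymp t^{2\alpha-3-\epsilon}\notin\Lp^1$), the law of the iterated logarithm only gives $|N(t)|\lesssim\sqrt{\langle N\rangle(t)\log\log\langle N\rangle(t)}$, so
\[
\frac{\Gamma(t)}{p(t)}\,|N(t)|\;\lesssim\;\Big(\tfrac{\Gamma^2(t)}{p^2(t)}\langle N\rangle(t)\Big)^{1/2}\,\big(\log\log\langle N\rangle(t)\big)^{1/2}.
\]
Your splitting argument does show that $\tfrac{\Gamma^2(t)}{p^2(t)}\langle N\rangle(t)\to0$, but with no rate, while the second factor diverges; the product therefore need not tend to zero, and no rate is available from the standing hypotheses. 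Dambis--Dubins--Schwarz does not rescue this, and a Borel--Cantelli/maximal-inequality patch over blocks would again require a summable rate that is not assumed. So as written, the convergence of $h(t)$ is not established.

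The paper closes item (ii) without any new martingale analysis. It observes that the proof of Theorem~\ref{fastconv} already yields the almost-sure existence of the limits of \emph{both} $\phi_1(t,X(t),V(t))=\Gamma^2(t)(f(X(t))-\min f)+\tfrac12\Vert b(X(t)-x^{\star})+\Gamma(t)V(t)\Vert^2+\tfrac{b(1-b)}{2}\Vert X(t)-x^{\star}\Vert^2$ and $\phi_2(t,X(t),V(t))=\Gamma^2(t)\big(f(X(t))-\min f+\tfrac12\Vert V(t)\Vert^2\big)$; subtracting and dividing by $b$ shows that $\tfrac12\Vert X(t)-x^{\star}\Vert^2+\Gamma(t)\langle X(t)-x^{\star},V(t)\rangle$ converges a.s.\ to some $\ell(\omega)$. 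The cross term is then removed pathwise by a purely deterministic integrating-factor argument: with $Z=\tfrac12\Vert X-x^{\star}\Vert^2-\ell$ and $g(t)=\exp\big(\int_{t_2}^t ds/\Gamma(s)\big)$ one has $d(gZ)=\big(\tfrac{g}{\Gamma}Z+g\langle V,X-x^{\star}\rangle\big)dt$, whence $\Vert Z(t)\Vert\le\tfrac{g(T)}{g(t)}\Vert Z(T)\Vert+\varepsilon$ and, since $1/\Gamma\notin\Lp^1$ (Lemma~\ref{1gam}) forces $g(t)\to+\infty$, $\limsup_t\Vert Z(t)\Vert\le\varepsilon$. This sidesteps entirely the boundary term $\tfrac{\Gamma}{p}N$ that blocks your approach; if you want to keep your double-integration scheme you would need to supply a quantitative decay of $\tfrac{\Gamma^2}{p^2}\langle N\rangle$ beating $\log\log\langle N\rangle$, which the hypotheses do not provide.
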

\begin{proof}
    \begin{enumerate}[label=(\roman*)]
    \item Analogous to the proof of the first point of \cite[Theorem 3.1]{mio}.
    \item Recalling the proof of Theorem \ref{fastconv}, we combine the fact that both 
    \[
    \lim_{t\rightarrow +\infty}\Gamma^2(t)(f(X(t))-\min f)+\frac{\Vert b(X(t)-x^{\star})+\Gamma(t)V(t)\Vert^2}{2}+\frac{b(1-b)}{2}\Vert X(t)-x^{\star}\Vert^2,
    \]
and
\[
\lim_{t\rightarrow +\infty}\Gamma^2(t)\left(f(X(t))-\min f+\frac{\Vert V(t)\Vert^2}{2}\right)
\] 
exist a.s.. We can substract both quantities to obtain that 
\[
\lim_{t\rightarrow +\infty} \frac{\Vert X(t)-x^{\star}\Vert^2}{2}+\Gamma(t)\langle X(t)-x^{\star}, V(t) \rangle \text{ exists a.s..}
\]
Thus, for every $x^{\star}\in\calS$ there exists $\Omega_{x^{\star}}\in\mathcal{F}$ with $\Pro(\Omega_{x^{\star}})=1$ and $\exists \ell:\Omega_{x^{\star}}\mapsto\R$ such that 
\[
\lim_{t\rightarrow +\infty}\frac{\Vert X(\omega,t)-x^{\star}\Vert^2}{2}+\Gamma(t)\langle V(\omega,t)  , X(\omega,t)-x^{\star}\rangle=\ell(\omega).
\]
Let $Z(\omega,t)=\frac{\Vert X(\omega,t)-x^{\star}\Vert^2}{2}-\ell(\omega)$ and $\varepsilon>0$ arbitrary. There exists $T(\omega)\geq t_0$ such that $\forall t\geq T(\omega)$ $$\Big\Vert Z(\omega,t)+\Gamma(t)\langle V(\omega,t),X(\omega,t)-x^{\star}\rangle\Big\Vert<\varepsilon.$$
    
    Let $g(t)\eqdef \exp\left(\int_{t_2}^t \frac{ds}{\Gamma(s)}\right)$, multiplying the previous inequality by $\frac{g(t)}{\Gamma(t)}$, there exists $T(\omega)\geq t_0$ such that for every $t\geq T(\omega)$: $$\Big\Vert \frac{g(t)}{\Gamma(t)}Z(t)+g(t)\langle V(\omega,t),X(\omega,t)-x^{\star}\rangle\Big\Vert<\frac{\varepsilon}{\Gamma(t)}g(t).$$
    
    On the other hand, $dZ(t)=\langle V(t), X(t)-x^{\star}\rangle dt$ and $$d\left(g(t)Z(t)\right)=\left(\frac{g(t)}{\Gamma(t)}Z(t)+g(t)\langle V(t),X(t)-x^{\star}\rangle\right)dt.$$
    Thus, \begin{align*}
        \Vert g(t)Z(t)-g(T)\tcb{Z}(T)\Vert&=\Big\Vert\int_T^t d(g(s)Z(s))\Big\Vert=\Big\Vert\int_T^t \left(\frac{g(s)}{\Gamma(s)}Z(s)+g(s)\langle V(s),X(s)-x^{\star}\rangle\right)ds\Big\Vert \\
        &\leq \varepsilon\int_T^t \frac{g(s)}{\Gamma(s)}ds=\varepsilon(g(t)-g(T)). 
    \end{align*}
    So, $$\Vert Z(t)\Vert\leq \frac{g(T)}{g(t)}\Vert \tcb{Z}(T)\Vert+\varepsilon.$$
    By Lemma \ref{1gam}, we obtain that $\lim_{t\rightarrow +\infty}g(t)=+\infty$. Hence, $\limsup_{t\rightarrow +\infty} \Vert Z(t)\Vert\leq \varepsilon$. And we conclude that for every $x^{\star}\in\calS$, $\lim_{t\rightarrow +\infty} \frac{\Vert X(t)-x^{\star}\Vert}{2}$ exists a.s.. By a separability argument (see proof of \cite[Theorem 3.1]{mio} or \cite[Theorem 3.6]{mio2}) there exists $\tilde{\Omega}\in\mathcal{F}$ (independent of $x^{\star}$) such that $\Pro(\tilde{\Omega})=1$ and $\lim_{t\rightarrow +\infty} \frac{\Vert X(\omega,t)-x^{\star}\Vert}{2}$ exists for every $\omega\in \tilde{\Omega}, x^{\star}\in\calS$.
    
    \smallskip
    
    \item If $\gamma$ is non-increasing, then $\Gamma$ is non-decreasing (see \cite[Corollary 2.3]{cabot}). Then, by item \ref{gamma-2} of Theorem \ref{fastconv}, we have that: $$\lim_{t\rightarrow+\infty}f(X(t))=\min f \quad a.s..$$  Let $\Omega_f\in\mathcal{F}$ be the set of events on which this limit is satisfied. Thus $\Pro(\Omega_f)=1$. Set $\Omega_{\mathrm{conv}}\eqdef\Omega_f\cap\tilde{\Omega}$. We have $\Pro(\Omega_{\mathrm{conv}})=1$. Now, let $\omega\in\Omega_{\mathrm{conv}}$ and $\widetilde{X}(\omega)$ be a weak sequential cluster point of $X(\omega,t)$ (\tcb{which exists by boundedness on $\Omega_{\mathrm{conv}}$}). Equivalently, there exists an increasing sequence $ (t_k)_{k\in\N}\subset \R_+$ such that $\lim_{k\rightarrow +\infty} t_k=+\infty$, and 
\[
\wlim_{k\rightarrow +\infty} X(\omega, t_k) = \widetilde{X}(\omega).
\]
Since $\lim_{t\rightarrow +\infty} f(X(\omega,t))=\min f$ and the fact that $f$ is weakly lower semicontinuous (since it is convex and continuous), we obtain directly that $\widetilde{X}(\omega)\in \calS$. Finally by Opial's Lemma (see \cite{opial}) we conclude that there exists $X^{\star}(\omega)\in \calS$ such that $\wlim_{t\rightarrow +\infty}X(\omega,t)= X^{\star}(\omega)$. In other words, since $\omega\in\Omega_{\mathrm{conv}}$ was arbitrary, there exists an $\calS$-valued random variable $X^{\star}$ such that $\wlim_{t\rightarrow +\infty} X(t)= X^{\star}$ a.s..
    \end{enumerate}
\end{proof}

\appendix

\section{Auxiliary results}\label{aux}
\subsection{Deterministic results}
\begin{lemma}\label{axby}
    Let $a,b\in\R$ and $x,y\in\H$, then $$\Vert ax-by\Vert\leq \max\{|a|,|b|\}\Vert x-y\Vert+|a-b|\max\{\Vert x\Vert,\Vert y\Vert\}.$$
\end{lemma}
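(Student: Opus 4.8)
The plan is to reduce the estimate to a single application of the triangle inequality after an elementary algebraic splitting, so no analytic machinery is needed here. The natural identity to exploit is
\[
ax - by = a(x-y) + (a-b)\,y,
\]
which decouples the genuine displacement $x-y$ (carrying the coefficient $a$) from the scalar discrepancy $a-b$ (carrying the vector $y$). First I would verify this identity by expanding the right-hand side, then take norms and apply the triangle inequality together with the homogeneity $\Vert \lambda z\Vert = |\lambda|\,\Vert z\Vert$, yielding
\[
\Vert ax - by\Vert \leq |a|\,\Vert x-y\Vert + |a-b|\,\Vert y\Vert .
\]

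The final step is to coarsen the two individual coefficients into the symmetric maxima appearing in the statement. Since $|a| \leq \max\{|a|,|b|\}$ and $\Vert y\Vert \leq \max\{\Vert x\Vert,\Vert y\Vert\}$, substituting these bounds directly gives
\[
\Vert ax - by\Vert \leq \max\{|a|,|b|\}\,\Vert x-y\Vert + |a-b|\,\max\{\Vert x\Vert,\Vert y\Vert\},
\]
which is exactly the claimed inequality. Notice that a single splitting already suffices; one does not need to take a minimum over decompositions, because the passage to the maxima absorbs the asymmetry of the chosen identity.

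I do not expect any genuine obstacle here, as the argument is purely algebraic. The only minor choice is where to place the auxiliary term in the splitting, and this choice is immaterial: the mirror identity $ax - by = b(x-y) + (a-b)\,x$ leads, by the same two steps, to $\Vert ax-by\Vert \leq |b|\,\Vert x-y\Vert + |a-b|\,\Vert x\Vert$, which is likewise dominated by the same right-hand side. This symmetry is a useful sanity check that the stated bound is the correct symmetrization of either one-sided estimate.
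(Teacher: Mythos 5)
Your argument is correct: the identity $ax-by=a(x-y)+(a-b)y$ holds, the triangle inequality and homogeneity give $\Vert ax-by\Vert\leq |a|\Vert x-y\Vert+|a-b|\Vert y\Vert$, and coarsening to the maxima yields the stated bound. The paper states this lemma without proof, and your argument is exactly the standard elementary one that is intended, so there is nothing further to compare.
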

\begin{lemma}\label{lim0}
Let $t_0>0$ and $a,b:[t_0,+\infty[\rightarrow \R_+$. If $\lim_{t\rightarrow \infty} a(t)$ exists, $b\notin\Lp^1([t_0,+\infty[)$ and $\int_{t_0}^{+\infty} a(s)b(s)ds<+\infty$, then $\lim_{t\rightarrow \infty} a(t)=0.$
\end{lemma}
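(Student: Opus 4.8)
The plan is to proceed by contradiction and exploit the sign conditions $a,b\geq 0$ together with the non-integrability of $b$. Since $a$ is $\R_+$-valued and $\lim_{t\to\infty}a(t)$ exists, I would denote this limit by $\ell\geq 0$; the whole task then reduces to excluding the possibility $\ell>0$.

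First I would assume $\ell>0$. By the very definition of the limit there is some $T\geq t_0$ with $a(t)\geq \ell/2$ for all $t\geq T$. On the tail $[T,+\infty[$ this lower bound turns the weighted integral into a genuine multiple of the integral of $b$:
\[
\int_T^{+\infty} a(s)b(s)\,ds \geq \frac{\ell}{2}\int_T^{+\infty} b(s)\,ds .
\]
The next step is to observe that, since $b\geq 0$ is locally integrable (so that $\int_{t_0}^{T} b(s)\,ds<+\infty$) but $b\notin \Lp^1([t_0,+\infty[)$, the tail $\int_T^{+\infty} b(s)\,ds$ must equal $+\infty$. Hence the right-hand side above is $+\infty$, so that $\int_{T}^{+\infty} a(s)b(s)\,ds=+\infty$. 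Because $ab\geq 0$, this forces $\int_{t_0}^{+\infty} a(s)b(s)\,ds=+\infty$ as well, contradicting the hypothesis $\int_{t_0}^{+\infty} a(s)b(s)\,ds<+\infty$. Therefore $\ell>0$ is impossible and $\ell=0$, as claimed.

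This argument is essentially elementary, so I do not anticipate a genuine obstacle; the only point requiring a little care is the tacit regularity of $b$. One must ensure that the splitting $\int_{t_0}^{+\infty}b = \int_{t_0}^T b + \int_T^{+\infty} b$ is legitimate, i.e. that $b$ is integrable on the compact piece $[t_0,T]$, so that non-integrability of $b$ over $[t_0,+\infty[$ is inherited by the tail $[T,+\infty[$. In the applications of this lemma in the paper, $b$ is continuous (e.g.\ $b=\lambda_c$ or $b=\Gamma$), so local integrability is automatic and this technical point is harmless.
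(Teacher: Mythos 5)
Your proof is correct. The paper actually states Lemma \ref{lim0} without any proof (it appears in the appendix as an elementary auxiliary fact), and your contradiction argument --- bounding $a(t)\geq \ell/2$ on a tail $[T,+\infty[$ and deducing $\int_T^{+\infty}a(s)b(s)\,ds \geq \tfrac{\ell}{2}\int_T^{+\infty}b(s)\,ds=+\infty$ --- is exactly the standard argument the authors are implicitly relying on; your side remark that local integrability of $b$ on $[t_0,T]$ is needed to pass non-integrability to the tail is accurate and indeed harmless in all the paper's applications, where $b$ is continuous.
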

\begin{lemma}\label{1gam}
Under hypothesis \eqref{H1}, then $$\int_{t_0}^{+\infty} \frac{ds}{\Gamma(s)}=+\infty.$$
\end{lemma}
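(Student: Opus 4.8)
The plan is to recognize the integrand $1/\Gamma$ as an exact logarithmic derivative and integrate explicitly. Introduce the tail function
\[
\psi(t) \eqdef \int_t^{+\infty}\frac{ds}{p(s)},
\]
which is well-defined, positive, and finite for every $t\geq t_0$ precisely because of \eqref{H1}. By definition \eqref{defgamma} we have $\Gamma(t)=p(t)\psi(t)$, so that
\[
\frac{1}{\Gamma(t)}=\frac{1}{p(t)\psi(t)}.
\]

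First I would differentiate $\psi$. By the fundamental theorem of calculus, $\psi'(t)=-1/p(t)$, hence $1/p(t)=-\psi'(t)$, and substituting into the previous display gives
\[
\frac{1}{\Gamma(t)}=\frac{-\psi'(t)}{\psi(t)}=-\frac{d}{dt}\ln\psi(t).
\]
This is the crux of the argument: the integrand is minus the derivative of $\ln\psi$.

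Next I would integrate from $t_0$ to $T$ to obtain
\[
\int_{t_0}^{T}\frac{ds}{\Gamma(s)}=-\bigl[\ln\psi(s)\bigr]_{t_0}^{T}=\ln\psi(t_0)-\ln\psi(T)=\ln\frac{\psi(t_0)}{\psi(T)}.
\]
It remains to let $T\to+\infty$. Since $s\mapsto 1/p(s)$ is integrable on $[t_0,+\infty[$ by \eqref{H1}, its tail $\psi(T)=\int_T^{+\infty}\frac{ds}{p(s)}$ tends to $0^+$ as $T\to+\infty$, so that $-\ln\psi(T)\to+\infty$. Because $\psi(t_0)$ is a fixed finite positive constant, the right-hand side diverges to $+\infty$, which is exactly the claim.

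I do not anticipate a genuine obstacle here: the only real step is spotting the logarithmic-derivative identity, after which everything is an elementary exact computation. The mild points to verify are that $\psi(t)>0$ for all $t$ (so that $\ln\psi$ is defined), which holds since $1/p$ is strictly positive, and that $\psi$ is differentiable, which follows from continuity of $1/p$. One could alternatively arrive at the same identity through the relation $\Gamma'=\gamma\Gamma-1$, but going directly through $\psi$ is cleaner and makes the divergence transparent.
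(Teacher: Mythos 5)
Your proof is correct and follows essentially the same route as the paper: the paper defines $q(t)=\int_t^{+\infty}\frac{ds}{p(s)}$ (your $\psi$), observes $q'=-1/p$ so that $1/\Gamma=-q'/q$, and integrates the logarithmic derivative, concluding from $q(t)\to 0^+$ that the integral diverges. No differences worth noting.
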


\begin{proof}
Let $q(t)\eqdef \int_t^{+\infty}\frac{ds}{p(s)}$, since $\int_{t_0}^{+\infty}\frac{ds}{p(s)}<+\infty$, then $\lim_{t\rightarrow +\infty} q(t)=0$ and $q'(t)=-\frac{1}{p(t)}$. On the other hand \begin{align*}
    \int_{t_0}^{+\infty} \frac{ds}{\Gamma(s)}=-\int_{t_0}^{+\infty}\frac{q'(s)}{q(s)}ds=\ln(q(t_0))-\lim_{t\rightarrow +\infty}\ln(q(t))=+\infty.
\end{align*}
\end{proof}
\begin{lemma}\label{intgam}
    Let $q:[t_0,+\infty[\rightarrow\R_+$ be a non-decreasing differentiable function, if $q\notin\Lp^1([t_0,+\infty[)$, then $\frac{q'}{q}\notin\Lp^1([t_0,+\infty[)$
\end{lemma}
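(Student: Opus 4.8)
The plan is to reduce the $\Lp^1$ question for $q'/q$ to the growth of $q$ via an explicit antiderivative, exactly in the spirit of the proof of Lemma~\ref{1gam}. Since $q$ is positive and differentiable, the map $s\mapsto\ln q(s)$ is $C^1$ with derivative $\frac{q'(s)}{q(s)}$, and because $q$ is non-decreasing we have $q'\geq 0$, so this integrand is non-negative. The fundamental theorem of calculus then gives, for every $t>t_0$,
\begin{equation*}
\int_{t_0}^{t}\frac{q'(s)}{q(s)}\,ds=\ln q(t)-\ln q(t_0).
\end{equation*}
As the integrand is non-negative, $t\mapsto\int_{t_0}^{t}q'/q$ is non-decreasing, so its limit as $t\to+\infty$ is either finite or $+\infty$. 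Consequently $\frac{q'}{q}\notin\Lp^1([t_0,+\infty[)$ if and only if $\lim_{t\to+\infty}\ln q(t)=+\infty$, that is, if and only if $q(t)\to+\infty$. Thus the whole statement collapses to the single claim that $q$ is unbounded.

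First I would record that, being non-decreasing and positive, $q$ admits a limit $L\eqdef\lim_{t\to+\infty}q(t)\in[q(t_0),+\infty]$, and I would argue by contradiction: suppose $L<+\infty$. Then $q(s)\leq L$ for all $s\geq t_0$, and I would try to combine this bound with the hypothesis $q\notin\Lp^1([t_0,+\infty[)$, i.e.\ $\int_{t_0}^{+\infty}q(s)\,ds=+\infty$, to reach a contradiction, thereby forcing $L=+\infty$ and closing the argument through the equivalence established above.

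The hard part will be precisely this last step, and it is where I expect the argument to be genuinely delicate. An upper bound $q\leq L$ does \emph{not} by itself contradict $\int_{t_0}^{+\infty}q=+\infty$ on the unbounded interval $[t_0,+\infty[$; indeed every non-decreasing positive $q$ already has infinite integral, since $q(s)\geq q(t_0)>0$. Hence monotonicity together with the literal hypothesis $q\notin\Lp^1$ is not enough, on its own, to force $q\to+\infty$, and one must extract unboundedness from more specific structure. In the regime in which the lemma is invoked this structure is available: there $q=\theta$ with $\theta(t)=\int_{t_0}^{t}\Gamma(s)\,ds$ a primitive of the \emph{non-integrable} function $\Gamma$ (recall that $\Gamma\notin\Lp^1$ is assumed in Theorem~\ref{fastconv}), which forces $\theta(t)\to+\infty$ and hence $\frac{\Gamma}{\theta}=\frac{\theta'}{\theta}\notin\Lp^1$. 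I would therefore concentrate my effort on making rigorous the passage from the hypothesis to the unboundedness of $q$, since the logarithmic reduction of the first paragraph is routine and supplies the rest at once.
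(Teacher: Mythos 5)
Your logarithmic reduction is, word for word, the paper's entire proof: the paper disposes of Lemma~\ref{intgam} with the single identity
\[
\int_{t_0}^{+\infty}\frac{q'(s)}{q(s)}\,ds=\lim_{t\rightarrow+\infty}\ln(q(t))-\ln(q(t_0))=+\infty,
\]
in which the final equality silently asserts $\lim_{t\rightarrow+\infty}q(t)=+\infty$. The step you refused to fabricate --- deducing unboundedness of $q$ from $q\notin\Lp^1([t_0,+\infty[)$ --- is exactly the step the paper glosses over, and your diagnosis that it cannot be carried out is correct: the lemma as stated is false. Take $q\equiv 1$ (or any bounded, positive, non-decreasing $q$): it is non-decreasing and differentiable, and $q\notin\Lp^1([t_0,+\infty[)$ since $q\geq q(t_0)>0$ on an unbounded interval, yet $q'/q\equiv 0\in\Lp^1([t_0,+\infty[)$. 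As you observe, the hypothesis $q\notin\Lp^1$ is automatically satisfied by every eventually positive non-decreasing function, so it carries no information and cannot force $q\to+\infty$.

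What the statement should hypothesize, and what you correctly reverse-engineered from the point of use, is $q'\notin\Lp^1([t_0,+\infty[)$: since $\int_{t_0}^{t}q'(s)\,ds=q(t)-q(t_0)$, this is equivalent to $q(t)\rightarrow+\infty$, and then your first paragraph is already a complete proof, with no delicate step remaining. That repaired lemma is precisely what Theorem~\ref{fastconv} invokes, where $q=\theta$, $q'=\theta'=\Gamma$, and $\Gamma\notin\Lp^1([t_0,+\infty[)$ is assumed (one must also integrate from $t_2>t_0$ there, since $\theta(t_0)=0$ makes $\ln\theta(t_0)$ undefined --- consistent with the paper's conclusion $\Gamma/\theta\notin\Lp^1([t_2,+\infty[)$). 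In short: your proposal is correct in everything it asserts, its computational core coincides with the paper's argument, and the obstruction you flagged is a genuine error in the paper, whose hypothesis should read $q'\notin\Lp^1$ rather than $q\notin\Lp^1$.
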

\begin{proof}
    By definition, $$\int_{t_0}^{+\infty} \frac{q'(s)}{q(s)}ds=\lim_{t\rightarrow +\infty}\ln(q(t))-\ln(q(t_0))=+\infty.$$
\end{proof}

\begin{lemma}\label{incomplete}
    For $a,x> 0$, let us define the upper incomplete Gamma function as:
    $$\Gamma_{inc}(a;x)=\int_x^{+\infty} s^{a-1}e^{-s}ds.$$
    Then, the following holds:
    \begin{enumerate}[label=(\roman*)]
        \item $x^{1-a}e^x\Gamma_{inc}(a;x)\leq 1$ for $0<a\leq 1$.
        \item \label{ine} $x^{1-a}e^x\Gamma_{inc}(a;x)\geq 1$ for $a\geq 1$.
        \item \label{asy} $\lim_{x\rightarrow +\infty} x^{1-a}e^x\Gamma_{inc}(a;x)=1$
    \end{enumerate}
\end{lemma}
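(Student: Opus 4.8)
The plan is to reduce all three claims to a single clean integral representation obtained by the shift $s = x+u$. Starting from $\Gamma_{inc}(a;x) = \int_x^{+\infty} s^{a-1} e^{-s}\, ds$ and substituting $s = x+u$ with $u \geq 0$ yields $e^x \Gamma_{inc}(a;x) = \int_0^{+\infty} (x+u)^{a-1} e^{-u}\, du$, so that the quantity of interest becomes
\[
h(x) \eqdef x^{1-a} e^x \Gamma_{inc}(a;x) = \int_0^{+\infty} \left(1 + \frac{u}{x}\right)^{a-1} e^{-u}\, du .
\]
This representation isolates the only nontrivial factor $(1 + u/x)^{a-1}$, whose behaviour is governed entirely by the sign of the exponent $a-1$, and it will serve all three parts simultaneously.

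For (i) and (ii) I would exploit the monotonicity of $t \mapsto t^{a-1}$ on $]0,+\infty[$. When $0 < a \leq 1$ the exponent $a-1 \leq 0$, so the map is non-increasing and $(1+u/x)^{a-1} \leq 1$ for all $u \geq 0$; integrating against $e^{-u}$, which has unit mass, gives $h(x) \leq 1$. When $a \geq 1$ the exponent is non-negative, the same map is non-decreasing, hence $(1+u/x)^{a-1} \geq 1$ and $h(x) \geq 1$. This disposes of both bounds with no further computation.

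For the asymptotic claim (iii) I would pass to the limit $x \to +\infty$ inside the integral. Pointwise $(1 + u/x)^{a-1} \to 1$ for each fixed $u$, and $\int_0^{+\infty} e^{-u}\,du = 1$, so it suffices to justify dominated convergence. The only mild obstacle is producing an $x$-uniform integrable dominating function, which I would handle by again splitting on the sign of $a-1$: for $0 < a \leq 1$ the integrand is bounded by $e^{-u}$ exactly as above, while for $a \geq 1$ and $x \geq 1$ one has $(1 + u/x)^{a-1} \leq (1+u)^{a-1}$, so the integrand is dominated by $(1+u)^{a-1} e^{-u}$, which is integrable on $[0,+\infty[$. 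The dominated convergence theorem then yields $h(x) \to \int_0^{+\infty} e^{-u}\,du = 1$, completing the proof.
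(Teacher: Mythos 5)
Your argument is correct and complete. The substitution $s=x+u$ gives the representation $x^{1-a}e^{x}\Gamma_{inc}(a;x)=\int_{0}^{+\infty}\left(1+\tfrac{u}{x}\right)^{a-1}e^{-u}\,du$, from which (i) and (ii) follow by the monotonicity of $t\mapsto t^{a-1}$ according to the sign of $a-1$, and (iii) follows by dominated convergence with the dominating function $e^{-u}$ when $0<a\leq 1$ and $(1+u)^{a-1}e^{-u}$ when $a\geq 1$ and $x\geq 1$; all of these steps check out. The paper takes a different route only in the sense that it does not prove the lemma at all: it simply cites standard properties of the upper incomplete Gamma function from a reference (Section 8 of the cited handbook), where these inequalities and the asymptotic $\Gamma_{inc}(a;x)\sim x^{a-1}e^{-x}$ are recorded. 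What your approach buys is a short, self-contained, and entirely elementary derivation that makes the paper independent of that external reference; what the citation buys is brevity and a pointer to sharper asymptotic expansions should they be needed elsewhere. One cosmetic remark: the restriction $x\geq 1$ in your domination step is harmless since you are taking $x\to+\infty$, but it is worth stating explicitly that the dominated convergence theorem is applied along $x\geq 1$ (or any fixed lower bound), as you have done.
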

\begin{proof}
    See \cite[Section 8]{dawson}.
\end{proof}
\begin{remark}
    Do not confuse $\Gamma_{inc}(a;x)$ with $\Gamma(t)$ defined in \eqref{defgamma}.
\end{remark}
\begin{corollary}\label{atr}
    Let us consider the viscous damping function $\gamma:[t_0,+\infty[\rightarrow\R_+$ defined by $\gamma(t)=\frac{\alpha}{t^r}$ with $r\in ]0,1[$ and $\alpha\geq 1-r$, then:
    \begin{enumerate}[label=(\roman*)]
        \item $\gamma$ satisfies \eqref{H1}.
        \item $\Gamma(t)=\mathcal{O}(t^r)$.
        \item $\gamma$ satisfies \eqref{H30}.
    \end{enumerate}
\end{corollary}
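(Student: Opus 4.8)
The plan is to reduce $\Gamma$ and the tail integral appearing in \eqref{H1} to the upper incomplete Gamma function of Lemma~\ref{incomplete} by a single change of variables, and then read off all three claims from its asymptotics. First I would compute $p$ explicitly: since $\int_{t_0}^t \gamma(s)\,ds=\frac{\alpha}{1-r}(t^{1-r}-t_0^{1-r})$, we have $p(t)=\exp\!\big(\tfrac{\alpha}{1-r}(t^{1-r}-t_0^{1-r})\big)$, so $1/p(s)$ is a constant multiple of $e^{-\frac{\alpha}{1-r}s^{1-r}}$. The constants $e^{\pm\frac{\alpha}{1-r}t_0^{1-r}}$ cancel in the ratio $p(t)/p(s)$, giving
\begin{equation*}
\Gamma(t)=\int_t^{+\infty}\frac{p(t)}{p(s)}\,ds=e^{\frac{\alpha}{1-r}t^{1-r}}\int_t^{+\infty}e^{-\frac{\alpha}{1-r}s^{1-r}}\,ds .
\end{equation*}

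The key computation is the substitution $u=\frac{\alpha}{1-r}s^{1-r}$, under which $ds=\frac{1}{\alpha}\big(\frac{1-r}{\alpha}\big)^{\frac{r}{1-r}}u^{\frac{r}{1-r}}\,du$. Writing $x=\frac{\alpha}{1-r}t^{1-r}$ and $a=1+\frac{r}{1-r}=\frac{1}{1-r}$, this turns the inner integral into $\frac{1}{\alpha}\big(\frac{1-r}{\alpha}\big)^{\frac{r}{1-r}}\Gamma_{inc}(a;x)$, whence
\begin{equation*}
\Gamma(t)=\frac{1}{\alpha}\Big(\frac{1-r}{\alpha}\Big)^{\frac{r}{1-r}}e^{x}\,\Gamma_{inc}(a;x).
\end{equation*}
Since $r\in\,]0,1[$ we have $a>1$, so the $a\geq 1$ branch of Lemma~\ref{incomplete} is the relevant one. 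Finiteness of $\Gamma_{inc}(a;x)$ for every $x>0$ immediately yields \eqref{H1} (applying the same substitution to $\int_{t_0}^{+\infty}\frac{ds}{p(s)}$ gives a constant times $\Gamma_{inc}\big(a;\tfrac{\alpha}{1-r}t_0^{1-r}\big)$, which is finite).

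Next I would extract the asymptotics of $\Gamma$. As $1-a=-\frac{r}{1-r}$, Lemma~\ref{incomplete} (iii) gives $e^{x}\Gamma_{inc}(a;x)\sim x^{\frac{r}{1-r}}$ as $x\to+\infty$, while Lemma~\ref{incomplete} (ii) gives the one-sided bound $e^x\Gamma_{inc}(a;x)\geq x^{\frac{r}{1-r}}$. Substituting $x^{\frac{r}{1-r}}=\big(\frac{\alpha}{1-r}\big)^{\frac{r}{1-r}}t^{r}$ and combining with the prefactor, the powers of $\frac{1-r}{\alpha}$ cancel and we obtain $\frac{t^r}{\alpha}\leq\Gamma(t)$ together with $\Gamma(t)\sim\frac{t^{r}}{\alpha}$ as $t\to+\infty$. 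In particular $\Gamma(t)=\mathcal{O}(t^{r})$, which is claim~(ii), and the matching lower bound shows the rate is sharp. Claim~\eqref{H30} then follows at once: $\gamma(t)\Gamma(t)=\frac{\alpha}{t^{r}}\Gamma(t)\to 1$, so fixing any $m\in\,]1,\tfrac32[$ there is $t_2\geq t_0$ with $\gamma(t)\Gamma(t)\leq m$ for all $t\geq t_2$.

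The only genuinely delicate point is the bookkeeping of constants in the substitution---in particular identifying the parameter $a=\frac{1}{1-r}>1$, so that parts (ii) and (iii) of Lemma~\ref{incomplete} apply---after which all three conclusions drop out of the single asymptotic $\Gamma(t)\sim t^{r}/\alpha$. Incidentally, the hypothesis $\alpha\geq 1-r$ does not seem to be needed for these three statements, as any $\alpha>0$ suffices; it is retained only to match the standing assumptions used elsewhere in the paper.
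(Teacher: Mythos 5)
Your proof is correct and follows essentially the same route as the paper: the same substitution $u=\frac{\alpha}{1-r}s^{1-r}$ reducing $\Gamma(t)$ to $\frac{e^x}{\alpha c^{r/(1-r)}}\Gamma_{inc}\bigl(\tfrac{1}{1-r};x\bigr)$ with $x=\frac{\alpha}{1-r}t^{1-r}$, followed by Lemma~\ref{incomplete} to get $\Gamma(t)=\mathcal{O}(t^r)$ and then \eqref{H30}. Your constant bookkeeping is in fact cleaner (the sharp asymptotic is $\Gamma(t)\sim t^r/\alpha$, so $\gamma(t)\Gamma(t)\to 1$, whereas the paper's displayed bound carries a spurious factor $c^{-1/(1-r)}$ and consequently invokes $c=\frac{\alpha}{1-r}\geq 1$ in step (iii)), and your observation that $\alpha\geq 1-r$ is not actually needed for these three claims is accurate.
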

\begin{proof}
\begin{enumerate}[label=(\roman*)]
    \item Let $c\eqdef \frac{\alpha}{1-r}\geq 1$, we first notice that after the change of variable $u=cs^{1-r}$, we get $$\int_{0}^{+\infty} \exp(-cs^{1-r})ds=\frac{1}{\alpha c^{\frac{r}{1-r}}}\int_0^{+\infty}u^{\frac{1}{1-r}-1}e^{-u}du<+\infty,$$
    since the last integral is the classical Gamma function (see \eg \cite[Section 5]{dawson}) evaluated at $\frac{1}{1-r}$, and this function is well defined for positive arguments, then \eqref{H1} is satisfied \tcb{as $t_0>0$}. 
    \item Besides, by definition $\Gamma(t)=\exp(ct^{1-r})\int_t^{+\infty}\exp(-cs^{1-r})ds$.
    Using the same change of variable as before, we obtain that \begin{equation}
        \Gamma(t)=\frac{\exp(ct^{1-r})}{\alpha c^{\frac{r}{1-r}}}\Gamma_{inc}\left(\frac{1}{1-r},ct^{1-r}\right).
    \end{equation}
    By \ref{asy} of Lemma \ref{incomplete} with $a=\frac{1}{1-r}>1$ and $x=ct^{1-r}$, for every $\varepsilon>0$, there exists $t_1>t_0$ such that for every $t>t_1$: $$\Gamma(t)\leq \frac{1+\varepsilon}{\alpha c^{\frac{1}{1-r}}}t^r.$$
\item Moreover, if we restrict $\varepsilon\in ]0,\frac{1}{2}[$, there exists $t_1>t_0$ such that for every $t>t_1$: $$\gamma(t)\Gamma(t)\leq \frac{1+\varepsilon}{c^{\frac{1}{1-r}}}\leq 1+\varepsilon.$$
Defining $m$ as $1+\varepsilon$, we have that $m<\frac{3}{2}$, and we conclude.
    \end{enumerate}
\end{proof}
\begin{lemma}\label{integral}
Let us define $p>0$ and $I_p(t)\eqdef \int_0^1 e^{-tu}(1-u)^p du$. Then 
\begin{enumerate}[label=(\roman*)]
\item $I_p(t)\leq t^{-1}$ for every $t>0$. 
\item $I_p(t)\sim t^{-1}$ as $t\rightarrow+\infty$.
\end{enumerate}
\end{lemma}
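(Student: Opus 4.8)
The plan is to treat the two claims separately: the first by a crude pointwise bound on the integrand, and the second by a change of variables followed by dominated convergence.

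For part (i), I would exploit that $p>0$ and that $1-u\in[0,1]$ for every $u\in[0,1]$, so that $(1-u)^p\leq 1$ on the whole interval of integration. Inserting this bound gives $I_p(t)\leq\int_0^1 e^{-tu}\,du=(1-e^{-t})/t\leq t^{-1}$ for every $t>0$, which is precisely the desired inequality.

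For part (ii), recalling that $I_p(t)\sim t^{-1}$ means $\lim_{t\to+\infty}tI_p(t)=1$, the natural move is the rescaling $v=tu$, which turns the integral into $tI_p(t)=\int_0^t e^{-v}(1-v/t)^p\,dv$. I would then regard the integrand as a function $g_t(v)=e^{-v}(1-v/t)^p\mathbf{1}_{[0,t]}(v)$ on $[0,+\infty[$. For each fixed $v\geq 0$ one has $g_t(v)\to e^{-v}$ as $t\to+\infty$, since eventually $t>v$ and $(1-v/t)^p\to 1$; and because $0\leq(1-v/t)^p\leq 1$ on $[0,t]$, the whole family is dominated by the integrable function $v\mapsto e^{-v}$. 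Dominated convergence then yields $\lim_{t\to+\infty}tI_p(t)=\int_0^{+\infty}e^{-v}\,dv=1$, which is the asymptotic equivalence.

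The argument is essentially routine, and I do not anticipate a genuine obstacle; the only point deserving care in part (ii) is the justification of the limit exchange, namely verifying the uniform domination by $e^{-v}$ (which uses $p>0$ exactly so that $(1-v/t)^p\leq 1$ on $[0,t]$) and checking that cutting the domain at $v=t$ does not affect the pointwise limit. One could alternatively dispense with the convergence theorem and instead produce matching two-sided estimates by splitting the integral at $v=\sqrt{t}$ and bounding $(1-v/t)^p$ from below on the lower range, but the dominated convergence route is the cleanest.
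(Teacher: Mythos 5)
Your proof is correct. Part (i) is exactly the argument the paper uses: bound $(1-u)^p\leq 1$ on $[0,1]$ and integrate the exponential. For part (ii), however, you take a genuinely different route. The paper simply invokes Watson's Lemma, the standard result on asymptotic expansions of Laplace-type integrals $\int_0^T e^{-tu}\phi(u)\,du$: since $\phi(u)=(1-u)^p$ satisfies $\phi(0)=1$, the leading term of the expansion is $\Gamma(1)\,t^{-1}=t^{-1}$, which gives the equivalence in one line (and would in fact deliver a full asymptotic expansion in powers of $t^{-1}$ if one wanted it). You instead rescale via $v=tu$ to get $tI_p(t)=\int_0^t e^{-v}(1-v/t)^p\,dv$ and pass to the limit by dominated convergence, with the domination $0\leq(1-v/t)^p\mathbf{1}_{[0,t]}(v)\leq 1$ and majorant $e^{-v}$. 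Your argument is entirely self-contained and elementary, at the cost of only producing the leading-order behavior --- which is all the lemma asserts, so nothing is lost here. The one point you flag, justifying the limit interchange, is handled correctly; the only cosmetic remark is that dominated convergence is usually stated for sequences, so strictly one passes through an arbitrary sequence $t_n\to+\infty$, which is routine.
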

\begin{proof}
The first result comes from bounding the term $(1-u)^p$ by $1$ in the integral, then we can notice directly that $I_p(t)\leq t^{-1}$ for every $t>0$. The second result is an application of Watson's Lemma (see \cite{watson}).
\end{proof}

\subsection{Stochastic results}
\subsubsection{On stochastic processes}\label{onstochastic}
We refer to the notation and results discussed in \cite[Section A.2]{mio2}.

\smallskip
\begin{proposition}
    \label{existencecor}
    Consider $\nu\geq 2$, $X_0,V_0\in \Lp^{\nu}(\Omega;\H)$, $f$ and $\sigma$ satisfying \eqref{H0} and \eqref{H}, respectively. Consider also $\gamma$ satisfying \eqref{H1}, and $\beta$ satisfying \eqref{H2}. Then \eqref{ISIHD-S} has a unique solution $(X,V)\in S_{\H\times\H}^{\nu}[t_0]$.
\end{proposition}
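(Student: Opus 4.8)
The plan is to recast \eqref{ISIHD-S} as a single first-order SDE on the product Hilbert space $\H\times\H$ and then verify that its coefficients satisfy the hypotheses of the standard existence-and-uniqueness theorem for Hilbert-valued SDEs (the one underlying \cite[Theorem 3.3]{mio2}, see also \cite[Section 2.3]{infinite}), applied with initial data $(X_0,V_0)\in\Lp^{\nu}(\Omega;\H\times\H)$. Reusing the notation $Z=(X,V)$, I set the drift $F(t,Z)=\bigl(V,\,-\gamma(t)V-\nabla f(X+\beta(t)V)\bigr)$ and take $G(t,Z)$ to be the block operator whose single nonzero block is $\sigma(t,X+\beta(t)V)$, driving the product Brownian motion $W=(W_1,W_2)$. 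It then suffices to check a growth bound at the origin and a Lipschitz estimate that is uniform on each finite time interval $[t_0,T]$.

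First I would establish the growth bound $\sup_{t\geq t_0}\bigl(\|F(t,0)\|+\|G(t,0)\|_{\HS}\bigr)<+\infty$. Here $F(t,0)=(0,-\nabla f(0))$ is constant, so $\|F(t,0)\|=\|\nabla f(0)\|$, while $\|G(t,0)\|_{\HS}=\|\sigma(t,0)\|_{\HS}\leq\sigma_*$ is bounded by \eqref{H} (see Remark \ref{sigma*}). Both bounds are immediate, and in particular \eqref{H1} plays no role at this stage, serving only later to define $\Gamma$.

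Next, the core of the argument is the local Lipschitz estimate. For $Z=(X,V)$ and $Z'=(X',V')$, the first component of $F(t,Z)-F(t,Z')$ is $V-V'$, controlled by $\|Z-Z'\|$. For the second component I would combine the triangle inequality with the $L$-Lipschitz continuity of $\nabla f$ from \eqref{H0} and the coupling identity $(X+\beta(t)V)-(X'+\beta(t)V')=(X-X')+\beta(t)(V-V')$, obtaining a bound of the form $\gamma(t)\|V-V'\|+L\|X-X'\|+L\beta(t)\|V-V'\|$. Likewise, the diffusion difference $\|\sigma(t,X+\beta(t)V)-\sigma(t,X'+\beta(t)V')\|_{\HS}$ is bounded through the $l_0$-Lipschitz property in \eqref{H} by $l_0\|X-X'\|+l_0\beta(t)\|V-V'\|$. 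Combining these, the overall Lipschitz constant on $[t_0,T]$ is of order $1+\sup_{[t_0,T]}\gamma+(L+l_0)\sup_{[t_0,T]}\beta$, which is finite because $\gamma$ and $\beta$ are continuous (hence locally bounded), with $\beta$ moreover eventually bounded by $c_1$ via \eqref{H2}. Having verified the growth and Lipschitz conditions for every $T>t_0$, the cited theorem delivers a unique solution $Z=(X,V)\in S_{\H\times\H}^{\nu}[t_0]$.

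The main obstacle is that the drift is \emph{not} globally Lipschitz uniformly in time, since $\gamma(t)$ and $\beta(t)$ multiply the velocity and make the Lipschitz constant time-dependent. The point to handle with care is therefore to confirm that this constant remains finite on each compact interval $[t_0,T]$ — which reduces to the local boundedness of $\gamma$ and $\beta$ — and to track how the spatial coupling $X+\beta(t)V$ inside $\nabla f$ and $\sigma$ distributes the Lipschitz estimates across the two components of $Z$.
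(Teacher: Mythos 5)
Your argument is correct, but it follows a genuinely different route from the paper. The paper does not work with the Hamiltonian variables $(X,V)$ at all: it first performs the change of variables $Y(t)=X(t)+\beta(t)V(t)$ and applies the existence theorem \cite[Theorem 3.3]{mio2} to the reformulated system \eqref{refor}--\eqref{refor2} in $Z=(X,Y)$, where the drift splits as $-\beta(t)\nabla\mathcal{G}(Z)-\mathcal{D}(t,Z)$; the two bounds in \eqref{H2} (namely $\beta(t)\leq c_1$ and $\bigl|\tfrac{\beta'(t)-\gamma(t)\beta(t)+1}{\beta(t)}\bigr|\leq c_2$) are then exactly what makes these operators Lipschitz with a \emph{time-uniform} constant, and $\gamma$ only ever enters through that bounded combination. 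Your direct approach in $(X,V)$ instead produces a $T$-dependent Lipschitz constant of order $1+\sup_{[t_0,T]}\gamma+(L+l_0)\sup_{[t_0,T]}\beta$, which is also acceptable for the cited theorem. What your route buys: it is more elementary, it does not actually use \eqref{H1} or \eqref{H2} (only local boundedness of the dampings), and it therefore covers $\beta\equiv 0$ uniformly --- a case the paper must exclude here (see the remark following the proposition) and treat separately in Section~\ref{sec:nesterov0}, where it resorts to precisely your product-space argument for \eqref{NAG}. What it costs: you implicitly assume $\gamma$ is continuous or at least locally bounded, which is not formally part of the standing hypotheses (\eqref{H1} only requires an integrability condition), although it holds in every instance considered in the paper and is needed anyway for the SDE to be meaningful; the paper's reformulation sidesteps this by bounding only the combination appearing in $\mathcal{D}$. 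Both proofs share the same minor looseness on the initial interval $[t_0,t_1]$ where \eqref{H2} gives no information.
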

\begin{remark}
    Hypothesis \eqref{H2} does not allow us to consider the case $\beta\equiv 0$, nevertheless, this case is well studied in Section \ref{sec:nesterov0}.
\end{remark}
\begin{proof}
    We rewrite \eqref{ISIHD-S} as in the reformulation \eqref{refor}, we recall \eqref{refor2}
 \begin{equation*}\begin{cases}
    dZ(t)&=-\beta(t)\nabla \mathcal{G}(Z(t))dt-\mathcal{D}(t,Z(t))dt+\hat{\sigma}(t,Z(t))dW(t),\quad t>t_0,\\
    Z(t_0)&=(X_0,X_0+\beta(t_0)V_0),\end{cases}
\end{equation*}
Since $\beta(t)\leq c_1$, we have that $-\beta(t)\nabla \mathcal{G}(Z(t))$ is Lipschitz, besides, since $\Big|\frac{\beta'(t)-\gamma(t)\beta(t)+1}{\beta(t)}\Big|\leq c_2$, we have that $\mathcal{D}$ is a Lipschitz operator. Then, using the hypotheses on $\sigma$, we can use \cite[Theorem 3.3]{mio2} and conclude the existence and uniqueness of a process $Z\in S_{\H\times \H}^{\nu}[t_0]$, this, in turn, implies the existence and uniqueness of a solution $(X,V)\in  S_{\H\times \H}^{\nu}[t_0]$ of \eqref{ISIHD-S}.  
\end{proof}
\begin{theorem}\label{convmartingale}
 Let $\H$ be a separable Hilbert space and $(M_t)_{t\geq 0}:\Omega\rightarrow\H$ be a continuous martingale such that $\sup_{t\geq 0} \EE\pa{\Vert M_t\Vert^2}<+\infty$. Then there exists a $\H-$valued random variable $M_{\infty}\in\Lp^2(\Omega;\H)$ such that $\lim_{t\rightarrow \infty}  M_t =M_{\infty}$ a.s..    
\end{theorem}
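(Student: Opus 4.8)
The plan is to follow the classical two-step route for $\Lp^2$-bounded martingales: first obtain convergence in $\Lp^2(\Omega;\H)$ through the orthogonality of the martingale increments, and then upgrade this to almost sure convergence via a maximal inequality. First I would show that $t\mapsto \EE[\norm{M_t}^2]$ is non-decreasing. For $s\leq t$, decomposing $M_t=(M_t-M_s)+M_s$ and using the martingale property $\EE[M_t-M_s\mid\calF_s]=0$ to annihilate the cross term, namely $\EE[\langle M_t-M_s,M_s\rangle]=\EE[\langle\EE[M_t-M_s\mid\calF_s],M_s\rangle]=0$, yields the Pythagorean identity $\EE[\norm{M_t}^2]=\EE[\norm{M_s}^2]+\EE[\norm{M_t-M_s}^2]$. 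Hence $\EE[\norm{M_t}^2]$ is non-decreasing and, being bounded above by hypothesis, converges to some $\ell\in\R_+$. The same identity gives $\EE[\norm{M_t-M_s}^2]=\EE[\norm{M_t}^2]-\EE[\norm{M_s}^2]\to 0$ as $s,t\to+\infty$, so $(M_t)$ is Cauchy in the complete space $\Lp^2(\Omega;\H)$ and converges there to some $M_\infty\in\Lp^2(\Omega;\H)$.

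The harder part is promoting this to almost sure convergence. For each fixed $T$, the shifted process $u\mapsto M_{T+u}-M_T$ is a continuous $\H$-valued martingale for the filtration $(\calF_{T+u})_{u\geq 0}$ starting at $0$; applying Doob's $\Lp^2$ maximal inequality on $[0,S]$ and letting $S\to+\infty$ by monotone convergence, I would obtain
\[
\EE\left[\sup_{t\geq T}\norm{M_t-M_T}^2\right]\leq 4\sup_{t\geq T}\EE[\norm{M_t-M_T}^2]=4\pa{\ell-\EE[\norm{M_T}^2]},
\]
and the right-hand side, which I denote $4\delta(T)$, tends to $0$ as $T\to+\infty$ by the computation above. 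Choosing $T_n\uparrow+\infty$ with $\delta(T_n)\leq 4^{-n}$ and writing $R_n\eqdef\sup_{t\geq T_n}\norm{M_t-M_{T_n}}$, Markov's inequality gives $\PP(R_n>2^{-n/2})\leq 2^n\EE[R_n^2]\leq 4\cdot 2^{-n}$, which is summable, so Borel--Cantelli yields $R_n\to 0$ almost surely.

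Finally, since $\norm{M_t-M_s}\leq 2R_n$ for all $s,t\geq T_n$, the family $(M_t)_{t\geq 0}$ is almost surely Cauchy as $t\to+\infty$; completeness of $\H$ then produces an almost sure limit, which must coincide with $M_\infty$ because $\Lp^2$ convergence already forces some subsequence to converge a.s. to $M_\infty$. I expect the main obstacle to be precisely this passage from $\Lp^2$ to almost sure convergence: justifying Doob's maximal inequality for the $\H$-valued process and controlling the continuum of times through the Borel--Cantelli subsequence argument, whereas the $\Lp^2$ estimates obtained from the orthogonality of increments are routine.
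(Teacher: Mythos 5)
Your proof is correct. The paper itself does not give an argument for this theorem: its ``proof'' is a one-line citation to \cite[Theorem A.7]{mio2}, so there is no in-paper technique to compare against; your write-up supplies the standard self-contained argument that such a citation implicitly points to. All the steps check out: the Pythagorean identity for the increments is legitimate in the Hilbert-valued setting (the cross term vanishes by pulling the $\calF_s$-measurable factor out of the conditional expectation), $\Lp^2(\Omega;\H)$ is complete so the Cauchy argument gives the $\Lp^2$ limit, and the passage to almost sure convergence via Doob's maximal inequality plus Borel--Cantelli along the subsequence $T_n$ is exactly the classical route. The one point you flag as the ``main obstacle'' is indeed the only place needing a word of justification: Doob's $\Lp^2$ maximal inequality applies to the $\H$-valued process because $u\mapsto\Vert M_{T+u}-M_T\Vert$ is a non-negative continuous submartingale (conditional Jensen in Hilbert space), and path continuity reduces the supremum over a continuum of times to a supremum over a countable dense set, so it is measurable; with those two remarks your estimate $\EE[\sup_{t\geq T}\Vert M_t-M_T\Vert^2]\leq 4(\ell-\EE[\Vert M_T\Vert^2])$ and the subsequent Borel--Cantelli argument are airtight, and the identification of the a.s.\ limit with the $\Lp^2$ limit via an a.s.\ convergent subsequence closes the proof.
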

\begin{proof}
    For the proof, we refer to \cite[Theorem A.7]{mio2}.
\end{proof}

\begin{theorem} \label{impp} \cite[Theorem 1.3.9]{mao}
 Let $\{A_t\}_{t\geq 0} $ and $\{U_t\}_{t\geq 0} $ be two continuous adapted increasing processes with $A_0=U_0=0$ a.s.. Let $\{M_t\}_{t\geq 0} $ be a real-valued continuous local martingale with $M_0=0$ a.s.. Let $\xi$ be a non-negative $\mathcal{F}_0$-measurable random variable. Define  $$X_t=\xi+A_t-U_t+M_t\hspace{0.3cm} \text{ for } t\geq 0.$$
 If $X_t$ is non-negative and $\lim_{t\rightarrow +\infty} A_t<\infty$, then   $\lim_{t\rightarrow +\infty} X_t$ exists and is finite, and $\lim_{t\rightarrow +\infty} U_t<\infty$.
\end{theorem}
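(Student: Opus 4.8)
The statement is the continuous-time nonnegative semimartingale convergence theorem (a Robbins--Siegmund/L\'epingle-type result), and the plan is to reduce it to Doob's supermartingale convergence theorem by a localization argument. The first step is to isolate the genuinely ``supermartingale'' part by setting $Y_t \eqdef X_t - A_t = \xi - U_t + M_t$. Since $-U$ is decreasing and $M$ is a continuous local martingale, $Y$ is a continuous local supermartingale, and the nonnegativity of $X$ yields $Y_t = X_t - A_t \geq -A_t$. This lower bound is only random because $A_\infty$ is random, so to make it deterministic I would localize in $A$: define $\tau_k \eqdef \inf\{t \geq 0 : A_t > k\}$, so that $A_{t \wedge \tau_k} \leq k$ and hence $Y_{t \wedge \tau_k} + k \geq 0$. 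Because $\lim_{t\to+\infty} A_t < \infty$ a.s., we have $\{A_\infty \leq k\} \subseteq \{\tau_k = \infty\}$, whence $\tau_k \uparrow \infty$ a.s. To cope with the possible non-integrability of $\xi$, I would simultaneously work on the $\calF_0$-measurable sets $B_m \eqdef \{\xi \leq m\}$, which exhaust $\Omega$ up to a null set since $\xi$ is a.s. finite.

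The core object is then $N^{k,m}_t \eqdef \mathbf{1}_{B_m}\bigl(Y_{t \wedge \tau_k} + k\bigr) \geq 0$. I would argue that it is a genuine supermartingale. The factor $\mathbf{1}_{B_m}$ is $\calF_0$-measurable, so it preserves adaptedness and the local-martingale property of $M^{\tau_k}$. After a routine simultaneous localization---stopping $M$ along a sequence making it a true martingale and stopping $U$ at $\theta_n \eqdef \inf\{t : U_t > n\}$ (with $\theta_n \uparrow \infty$ since $U$ is continuous hence finite-valued)---each stopped process $N^{k,m}_{\cdot \wedge \rho_n}$ has integrable terms and is a supermartingale, and its initial value $\mathbf{1}_{B_m}(\xi + k) \leq m + k$ is integrable. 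Passing to the limit via the conditional Fatou lemma shows that the nonnegative, bounded-below local supermartingale $N^{k,m}$ is a true supermartingale. Doob's supermartingale convergence theorem then yields that $N^{k,m}_t$ converges a.s. to a finite limit; in particular $\lim_t Y_{t \wedge \tau_k}$ exists and is finite on $B_m$.

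It remains to recover the two conclusions separately. For the increasing process, I would rewrite $\mathbf{1}_{B_m} M^{\tau_k}_t = N^{k,m}_t + \mathbf{1}_{B_m} U_{t \wedge \tau_k} - \mathbf{1}_{B_m}(\xi + k) \geq -(m+k)$, so $\mathbf{1}_{B_m} M^{\tau_k}$ is a local martingale bounded below, hence a supermartingale, giving $\mathbb{E}\bigl[\mathbf{1}_{B_m} M^{\tau_k}_t\bigr] \leq 0$. Taking expectations in the identity above and using $N^{k,m}_t \geq 0$ yields $\sup_t \mathbb{E}\bigl[\mathbf{1}_{B_m} U_{t \wedge \tau_k}\bigr] \leq m + k$, so by monotone convergence $U_{t \wedge \tau_k} \to U^{\tau_k}_\infty < \infty$ a.s. on $B_m$. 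Moreover $\mathbf{1}_{B_m} M^{\tau_k}_t + (m+k)$ is a nonnegative supermartingale, hence converges a.s. to a finite limit, so $M^{\tau_k}$ and therefore $Y^{\tau_k}$ converge; combined with the convergence of the bounded increasing process $A_{t \wedge \tau_k}$, this gives that $X_{t \wedge \tau_k} = Y_{t \wedge \tau_k} + A_{t \wedge \tau_k}$ converges to a finite limit.

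Finally I would remove the localization. Almost surely one can pick $k$ with $\tau_k = \infty$ (possible since $A_\infty < \infty$ a.s.) and $m$ with $\xi \leq m$ (possible since $\xi < \infty$ a.s.); on $B_m \cap \{\tau_k = \infty\}$ every stopped process coincides with its original, so $\lim_t X_t$ exists and is finite and $\lim_t U_t = U_\infty < \infty$. Since $\bigcup_{k,m} \bigl(B_m \cap \{\tau_k = \infty\}\bigr)$ has full measure, the conclusion holds a.s. I expect the main obstacle to be the handling of \emph{two} defects at once---$M$ being merely a local martingale and $\xi$ being possibly non-integrable---which is exactly what the double truncation $(\tau_k, B_m)$ together with the Fatou principle (that bounded-below local supermartingales are true supermartingales) is designed to overcome; the separation of the limits of $U$ and of the martingale part is the other delicate point.
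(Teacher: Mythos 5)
Your proof is correct. The paper itself offers no argument for this statement---it is imported verbatim from Mao's book (Theorem 1.3.9), where it is in turn credited to Liptser and Shiryaev---and your double-localization proof (stopping at $\tau_k=\inf\{t:A_t>k\}$ to bound the supermartingale part below by $-k$, restricting to $B_m=\{\xi\le m\}\in\mathcal{F}_0$ to restore integrability, upgrading the nonnegative local supermartingale $N^{k,m}$ to a true supermartingale via conditional Fatou, and then separating the limits of $U^{\tau_k}$ and $M^{\tau_k}$ through the bound $\mathbb{E}\bigl[\mathbf{1}_{B_m}U_{t\wedge\tau_k}\bigr]\le m+k$ and the convergence of the bounded-below supermartingale $\mathbf{1}_{B_m}M^{\tau_k}_t+(m+k)$) is exactly the classical route; note that your argument in fact delivers the slightly sharper eventwise form, namely that the conclusions hold a.s. on $\{\lim_{t\rightarrow+\infty}A_t<\infty\}=\bigcup_k\{\tau_k=\infty\}$, which is how the theorem is stated in the cited sources.
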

\subsection{Abstract integral bounds, almost sure and in expectation properties of \texorpdfstring{\eqref{ISIHD-S}}{}}
In the following proposition, we state different abstract integral bounds and almost sure properties for \eqref{ISIHD-S}, finally concluding with the almost sure convergence of the gradient towards zero. 
\begin{proposition}\label{abcdcomplete}
Consider that $f,\sigma$ satisfy \eqref{H0} and \eqref{H}, respectively. Let $\nu\geq 2$, and consider the dynamic \eqref{ISIHD-S} with initial data $X_0,V_0\in\Lp^{\nu}(\Omega;\H)$. Consider also $\gamma, \beta$ from \eqref{ISIHD-S} satisfying \eqref{H1} and \eqref{H2}, respectively, and suppose there exists $a,b,c,d$ satisfying \eqref{systemabcd}. Finally, we consider $\mathcal{E}$ the energy function defined in \eqref{liapunov}.

\smallskip

Then, there exists a unique solution $(X,V)\in S_{\H\times \H}^{\nu}[t_0]$ of \eqref{ISIHD-S}. Moreover, if   $t\mapsto m(t)\sigma_{\infty}^2(t)\in\Lp^1([t_0,+\infty[)$, where $m(t)\eqdef\max\{1,a(t),c^2(t)\}$, then the following properties are satisfied: \begin{enumerate}[label=(\roman*)]
    \item \label{one}$$\lim_{t\rightarrow+\infty}\mathcal{E}(t,X(t),V(t)) \hspace{0.2cm} \text{exists a.s..}$$
    \item $\int_{t_0}^{+\infty}(b(s)c(s)-a'(s))(f(X(s)+\beta(s)V(s))-\min f)ds<+\infty$, a.s..
    \item $\int_{t_0}^{+\infty}a(s)\beta(s)\Vert\nabla f(X(s))+\beta(s)V(s)\Vert^2ds<+\infty$, a.s..
    \item $\int_{t_0}^{+\infty}\left(b(s)b'(s)+\frac{d'(s)}{2}\right)\Vert X(s)-x^{\star}\Vert^2ds<+\infty$, a.s..
    \item \label{five} $\int_{t_0}^{+\infty}c(s)(\gamma(s)c(s)-c'(s)-b(s))\Vert V(s)\Vert^2ds<+\infty$, a.s..
    \item \label{six} 
    If $b(t)c(t)-a'(t)=\mathcal{O}(c(t)(\gamma(t)c(t)-c'(t)-b(t)))$, then $$\int_{t_0}^{+\infty}(b(s)c(s)-a'(s))(f(X(s))-\min f)ds<+\infty \quad a.s..$$ 
\item \label{v0} If there exists $\eta>0,\hat{t}>t_0$ such that $$\eta\leq c(t)(\gamma(t)c(t)-c'(t)-b(t)),\quad \eta\leq a(t)\beta(t),\quad \gamma(t)\leq \eta,\quad \forall t>\hat{t},$$ then $\lim_{t\rightarrow +\infty}\Vert V(t)\Vert=0$ a.s., $\lim_{t\rightarrow +\infty}\Vert \nabla f(X(t)+\beta(t)V(t))\Vert=0$ a.s., and 
$\lim_{t\rightarrow +\infty}\Vert \nabla f(X(t))\Vert=0$ a.s.

\end{enumerate}
\end{proposition}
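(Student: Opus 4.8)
The plan is to run It\^o's formula (Proposition~\ref{itos}) on the energy $\mathcal{E}$ of \eqref{liapunov} and then feed the resulting decomposition into the semimartingale convergence result Theorem~\ref{impp}. Existence and uniqueness of $(X,V)\in S_{\H\times\H}^{\nu}[t_0]$ is already granted by Proposition~\ref{existencecor} under \eqref{H1}--\eqref{H2}. For the It\^o step, writing $w=x+\beta(t)v$, I would first record the derivatives
$$\nabla_v\mathcal{E}=a\beta\nabla f(w)+c\big(b(x-x^\star)+cv\big),\qquad \nabla_x\mathcal{E}=a\nabla f(w)+b\big(b(x-x^\star)+cv\big)+d(x-x^\star),$$
together with $\nabla_v^2\mathcal{E}=a\beta^2\nabla^2 f(w)+c^2 I_{\H}$ and the explicit $\partial_t\mathcal{E}$, and substitute them into Proposition~\ref{itos}. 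The crux here is purely algebraic bookkeeping: after collecting the coefficients of $f(w)-\min f$, $\|\nabla f(w)\|^2$, $\|x-x^\star\|^2$, $\|v\|^2$ and of the three cross products $\langle\nabla f(w),v\rangle$, $\langle\nabla f(w),x-x^\star\rangle$, $\langle x-x^\star,v\rangle$, each cross product must be annihilated by exactly one of the two equalities of \eqref{systemabcd} (the third and fifth relations), while the four surviving square terms should appear with coefficients $-(bc-a')$, $-a\beta$, $-(bb'+\tfrac{d'}{2})$ and $-c(\gamma c-c'-b)$, all nonnegative by the remaining inequalities of \eqref{systemabcd}.

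The drift of $\mathcal{E}(t,X,V)$ is then $-\mathrm{d}U_t$, with $U_t$ the nondecreasing time-integral of these four dissipation terms, plus the genuinely stochastic second-order correction $\tfrac12\tr[\Sigma\,\nabla_v^2\mathcal{E}]$. This is where the new feature relative to the deterministic analysis enters: using $\nabla^2 f\preceq L\,I_{\H}$, Remark~\ref{sigma*}, and $\beta\leq c_1$ from \eqref{H2}, I would bound
$$\tfrac12\tr[\Sigma\,\nabla_v^2\mathcal{E}]=\tfrac{a\beta^2}{2}\tr[\Sigma\nabla^2 f(w)]+\tfrac{c^2}{2}\tr[\Sigma]\leq\tfrac12\big(a\beta^2 L+c^2\big)\sigma_\infty^2\leq C\,m(t)\sigma_\infty^2(t),$$
so that the hypothesis $m\sigma_\infty^2\in\Lp^1([t_0,+\infty[)$ makes the integral $A_t$ of this correction a finite nondecreasing process. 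The stochastic integral $M_t$ is a local martingale, square-integrable on compact intervals by the moment bound of $S_{\H\times\H}^{\nu}[t_0]$ and the second part of Proposition~\ref{itos}. With $\mathcal{E}\geq0$ (note $a\geq0$ by the second relation of \eqref{systemabcd}, and $d\geq0$ in the cases of interest), the decomposition $\mathcal{E}(t,X,V)=\mathcal{E}(t_0,X_0,V_0)+A_t-U_t+M_t$ meets the hypotheses of Theorem~\ref{impp}, which yields at once \ref{one} (existence of $\lim_{t\to+\infty}\mathcal{E}$ a.s.) and $\lim_{t\to+\infty}U_t<+\infty$ a.s., the latter being precisely the four integral estimates (ii)--\ref{five}.

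For \ref{six}, I would combine the descent lemma (Lemma~\ref{descent}) applied at $w=X+\beta V$ and at $X$ with Corollary~\ref{2L} to get
$$f(X)-\min f\leq\big(f(X+\beta V)-\min f\big)+\beta\|\nabla f(X+\beta V)\|\,\|V\|+\tfrac{L\beta^2}{2}\|V\|^2,$$
and then a Young splitting of the cross term. Multiplying by $bc-a'\geq0$, the first piece is integrable by (ii), the gradient piece by (iii) through $\|\nabla f(X+\beta V)\|^2\leq 2L(f(X+\beta V)-\min f)$, and the velocity piece by \ref{five} under the assumed domination $bc-a'=\mathcal{O}(c(\gamma c-c'-b))$ (with $\beta\leq c_1$). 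Finally, for \ref{v0} the conditions $\eta\leq c(\gamma c-c'-b)$ and $\eta\leq a\beta$ upgrade \ref{five} and (iii) to $\int\|V\|^2\,\mathrm{d}s<+\infty$ and $\int\|\nabla f(X+\beta V)\|^2\,\mathrm{d}s<+\infty$ a.s.; I would then run two auxiliary It\^o computations, on $\tfrac12\|v\|^2$ and on $f(x+\beta v)-\min f$, whose drifts are (by $\gamma\leq\eta$, \eqref{H2}, $\nabla^2 f\preceq L\,I_{\H}$, and the two integrability facts just obtained) absolutely integrable modulo the finite trace correction, so that Theorem~\ref{impp} gives that $\lim_{t\to+\infty}\|V(t)\|^2$ and $\lim_{t\to+\infty}(f(X+\beta V)-\min f)$ exist a.s. Since the constant weight $1\notin\Lp^1$, Lemma~\ref{lim0} forces $\lim_{t\to+\infty}\|V(t)\|=0$; combined with a.s.\ boundedness of the trajectory and a $\liminf$ argument from $\int\|\nabla f(X+\beta V)\|^2<+\infty$, the existing limit of $f(X+\beta V)-\min f$ must be $0$, whence $\|\nabla f(X+\beta V)\|\to0$ by Corollary~\ref{2L}, and $\|\nabla f(X)\|\to0$ follows from $L$-Lipschitzness of $\nabla f$ and $\beta\|V\|\to0$. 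I expect the main obstacle to be twofold: the careful sign/cancellation bookkeeping in the algebraic reduction of the first step, and in \ref{v0} the passage from square-integrability to pointwise limits, which cannot be done directly (an It\^o expansion of $\|\nabla f\|^2$ would demand $f\in C^3$) and must instead be routed through the auxiliary semimartingale convergence before Lemma~\ref{lim0} applies.
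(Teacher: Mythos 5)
Your treatment of items (i)--(vi) follows the paper's proof essentially verbatim: the same It\^o expansion of $\mathcal{E}$, the same identification of the four dissipation terms with the inequalities of \eqref{systemabcd}, the same bound $\tfrac12\tr[\Sigma\,\nabla_v^2\mathcal{E}]\leq C\,m(t)\sigma_\infty^2(t)$, the same appeal to Theorem~\ref{impp}, and for (vi) the same descent-lemma/Corollary~\ref{2L} splitting. For the first claim of (vii), you take a genuinely different and arguably cleaner route: the paper proves $\|V(t)\|\to0$ by contradiction, extracting a spaced sequence $t_k$ with $\|V(t_k)\|>\delta$ and showing, via the integral form of the SDE and the a.s.\ convergence of the martingale $N_t=\int\sigma\,dW$ (Theorem~\ref{convmartingale}), that $\|V\|$ stays above $\delta/2$ on $[t_k,t_k+\varepsilon]$, contradicting $\int\|V\|^2<\infty$. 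Your route through It\^o on $\tfrac12\|v\|^2$, Theorem~\ref{impp} and Lemma~\ref{lim0} is sound, since the drift is dominated by the already-integrable quantities $\|V\|^2$, $\|\nabla f(X+\beta V)\|^2$ and $\sigma_\infty^2$, and it avoids the subsequence construction.

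The gap is in the second claim of (vii). You propose to obtain $\lim_t\bigl(f(X+\beta V)-\min f\bigr)=\ell$ from a second auxiliary semimartingale argument (which is fine), and then to force $\ell=0$ by combining $\liminf_t\|\nabla f(X+\beta V)\|=0$ with ``a.s.\ boundedness of the trajectory''. That boundedness is not available under the hypotheses of item (vii): the energy $\mathcal{E}$ controls $\|X-x^\star\|$ only through $\tfrac{d}{2}\|X-x^\star\|^2$ and $\tfrac12\|b(X-x^\star)+cV\|^2$, and nothing in \eqref{systemabcd} or in the assumptions of (vii) forces $d$ (or $b$) to stay away from zero --- the lower bound $d\geq D>0$ is an extra hypothesis that appears only in Proposition~\ref{abcdexpcomplete}, and even there it yields a second-moment bound, not almost sure boundedness. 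Without boundedness, $\liminf\|\nabla f(Y)\|=0$ does not transfer to $\liminf(f(Y)-\min f)=0$: the inequality $\|\nabla f\|^2\leq 2L(f-\min f)$ goes the wrong way, and the convexity bound $f(Y)-\min f\leq\|\nabla f(Y)\|\,\|Y-x^\star\|$ needs $\|Y-x^\star\|$ controlled. The paper circumvents this with a second oscillation argument: if $\limsup\|\nabla f(Y(t))\|>\delta$, one extracts spaced times $t_k$, shows $\|Y(t)-Y(t_k)\|$ is uniformly small on $[t_k,t_k+\varepsilon']$ using $\int\|V\|^2<\infty$, $\|V\|\to0$ and the continuity of $\beta$, and then the $L$-Lipschitz continuity of $\nabla f$ keeps $\|\nabla f(Y)\|\geq\delta/2$ on those intervals, contradicting $\int\|\nabla f(Y)\|^2<\infty$; no boundedness of $X$ is used. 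You should replace your liminf step with this argument (the final claim $\|\nabla f(X)\|\to0$ then follows, as you say, from $\|\nabla f(X)\|\leq\|\nabla f(Y)\|+L\beta_0\|V\|$).
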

\begin{proof}
The existence and uniqueness of a solution is a direct consequence of Corollary \ref{existencecor}. Moreover, applying Proposition \ref{itos} with $\mathcal{E}$, we can obtain \begin{equation}\label{eqhessianpert}
    \begin{split}
        \mathcal{E}(t,X(t),V(t))&\leq\mathcal{E}(t_0,X_0,V_0)-\int_{t_0}^t (b(s)c(s)-a'(s))(f(X(s)+\beta(s)V(s))-\min f)ds\\
        &-\int_{t_0}^t a(s)\beta(s)\Vert \nabla f(X(s)+\beta(s)V(s))\Vert^2ds-\int_{t_0}^t \left(b(s)b'(s)+\frac{d'(s)}{2}\right)\Vert X(s)-x^{\star}\Vert^2ds\\
        &-\int_{t_0}^t c(s)(b(s)+c'(s)-c(s)\gamma(s))\Vert V(s)\Vert^2ds+\int_{t_0}^t (La(s)\beta^2(s)+c^2(s))\sigma_{\infty}^2(s)ds+M_t,
    \end{split}
\end{equation}
where $M_t=\int_{t_0}^t\langle \sigma^{\star}(s,X(s)+\beta(s)V(s))(a(s)\beta(s)\nabla f(X(s)+\beta(s)V(s))+c(s)[b(s)(X(s)-x^{\star})+c(s)V(s)], dW(s)\rangle$.
Since $\sup_{t\in [t_0,T]}\mathbb{E}(\Vert X(t)\Vert^2)<+\infty, \sup_{t\in [t_0,T]}\mathbb{E}(\Vert V(t)\Vert^2)<+\infty$ for every $T>t_0$, and $a,b,c,\beta$ are continuous functions, we have that $M_t$ is a continuous martingale, on the other hand, we have that $$\int_{t_0}^{+\infty} (La(s)\beta^2(s)+c^2(s))\sigma_{\infty}^2(s)ds<+\infty.$$ Then, we can apply Theorem \ref{impp} and conclude that $\lim_{t\rightarrow +\infty}\mathcal{E}(t,X(t),V(t))$ exists a.s. and \begin{itemize}
    \item $\int_{t_0}^{+\infty}(b(s)c(s)-a'(s))(f(X(s)+\beta(s)V(s))-\min f)ds<+\infty$ a.s..
    \item $\int_{t_0}^{+\infty}a(s)\beta(s)\Vert\nabla f(X(s))+\beta(s)V(s)\Vert^2ds<+\infty$ a.s..
    \item $\int_{t_0}^{+\infty}\left(b(s)b'(s)+\frac{d'(s)}{2}\right)\Vert X(s)-x^{\star}\Vert^2ds<+\infty$ a.s..
    \item $\int_{t_0}^{+\infty}c(s)(\gamma(s)c(s)-c'(s)-b(s))\Vert V(s)\Vert^2ds<+\infty$ a.s..
\end{itemize}
This let us conclude with items \ref{one} to \ref{five}.

\smallskip

Let $\tilde{b}(t)=b(t)c(t)-a'(t)$, and
    \begin{align*}
        I_f&\eqdef\int_{t_0}^{+\infty} \tilde{b}(s)(f(X(s))-\min f) ds\\
        &\leq\int_{t_0}^{+\infty} \tilde{b}(s)(f(X(s))-f(X(s)+\beta(s)V(s))) ds+\int_{t_0}^{+\infty} \tilde{b}(s)(f(X(s)+\beta(s)V(s))-\min f) ds\\ 
        \end{align*}
        Using Descent Lemma, Cauchy Schwarz Inequality and Corollary \ref{2L}:
        \begin{align*}
        I_f&\leq \sqrt{2L}\beta_0\pa{\int_{t_0}^{+\infty}\tilde{b}(s)(f(X(s)+\beta(s)V(s))-\min f)ds}^{\frac{1}{2}}\pa{\int_{t_0}^{+\infty}\tilde{b}(s)\Vert V(s)\Vert^2ds}^{\frac{1}{2}}\\
        &+\frac{L\beta_0^2}{2}\int_{t_0}^{+\infty}\tilde{b}(s)\Vert V(s)\Vert^2ds+\int_{t_0}^{+\infty} \tilde{b}(s)(f(X(s)+\beta(s)V(s))-\min f) ds.
    \end{align*}
  \noindent  If $\tilde{b}(t)=b(t)c(t)-a'(t)=\mathcal{O}(c(t)(\gamma(t)c(t)-c'(t)-b(t)))$, we have that $$\int_{t_0}^{+\infty} \tilde{b}(s)\Vert V(s)\Vert^2ds<+\infty \quad a.s..$$ And we conclude with item \ref{six}. 

  \smallskip

To prove \ref{v0}, in particular that $\lim_{t\rightarrow \infty}\Vert V(t)\Vert=0$,  we consider that if there exists $\eta>0,\hat{t}>t_0$ such that $\eta\leq c(t)(\gamma(t)c(t)-c'(t)-b(t)), \forall t>\hat{t}$, then there exists $\Omega_v\in\calF$ such that $\Pro(\Omega_v)=1$ and $$\int_{t_0}^{+\infty} \Vert V(\omega,s)\Vert^2ds<+\infty, \quad \forall \omega\in\Omega_v.$$ Then, we have $\liminf_{t\rightarrow \infty}\Vert V(\omega,t)\Vert=0, \forall \omega\in\Omega_v$. Let us suppose that $\limsup_{t\rightarrow \infty}\Vert V(\omega,t)\Vert>0, \forall\omega\in\Omega_v$. Then, by \cite[Lemma A.3]{mio2}, there exists $\delta>0$ satisfying 
\[
0=\liminf_{t\rightarrow +\infty} \norm{V(\omega,t)}<\delta<\limsup_{t\rightarrow +\infty} \norm{V(\omega,t)} ,\quad\forall\omega\in\Omega_v.
\] 
And there exists $(t_k)_{k\in\N}\subset [t_0,+\infty[$ such that $\lim_{k\rightarrow +\infty} t_k=+\infty$, 
\[ 
\norm{V(\omega,t_k)}>\delta,\quad \forall\omega\in\Omega_v \qandq t_{k+1}-t_k>1, \quad \forall k\in\N.
\]
Let $N_t\eqdef\int_{t_0}^t \sigma(s,X(s)+\beta(s)V(s))dW(s)$. This is a continuous martingale (w.r.t. the filtration $\calF_t$), which verifies
\[
\EE(\Vert N_t\Vert^2)=\EE\pa{\int_{t_0}^t \norm{\sigma(s,X(s)+\beta(s)V(s))}_{\HS}^2ds}\leq \EE\pa{\int_{t_0}^{+\infty} \sigma_{\infty}^2(s)ds}<+\infty, \forall t\geq t_0.
\]
 According to Theorem \ref{convmartingale}, we deduce that there exists a $\H-$valued random variable $N_{\infty}$ w.r.t. $\calF_{\infty}$, and which verifies: $\EE(\Vert N_{\infty}\Vert^2)<+\infty$, and there exists $\Omega_N\in\calF$ such that $\Pro(\Omega_N)=1$ and 
\[
\lim_{t\rightarrow +\infty}N_t(\omega)= N_{\infty}(\omega) \mbox{ for every } \omega\in\Omega_N .
\]
Let $\omega_0\in\Omega_{nv}\eqdef\Omega_N\cap\Omega_v$ ($\Pro(\Omega_{nv})=1$) and the notation $V(t)\eqdef V(\omega_0,t)$, $\varepsilon\in \left(0,\min\{1,\frac{\delta^2}{4}\}\right)$ arbitrary and recall that $\eta\leq a(t)\beta(t),\gamma(t)\leq \eta$ for every $t>\hat{t}$. Let $k'\in\N$ be such that $t_{k'}>\hat{t}$, $k>k'$ and $t\in[t_k,t_k+\varepsilon]$, then  \begin{align*}
    \norm{V(t)-V(t_k)}^2&\leq 3(t-t_k)\int_{t_k}^t\gamma^2(s)\Vert V(s)\Vert^2ds+3(t-t_k)\int_{t_k}^t \Vert\nabla f(X(s)+\beta(s)V(s))\Vert^2ds\\
    &\phantom{=}+3\Vert N_t-N_{t_k}\Vert^2\\
    &\leq 3\eta^2(t-t_k)\int_{t_k}^t\Vert V(s)\Vert^2ds+\frac{3}{\eta}(t-t_k)\int_{t_k}^t a(s)\beta(s)\Vert\nabla f(X(s)+\beta(s)V(s))\Vert^2ds\\
    &\phantom{=}+3\Vert N_t-N_{t_k}\Vert^2\\
    &\leq  3\eta^2\varepsilon\int_{t_k}^t\Vert V(s)\Vert^2ds+\frac{3}{\eta}\varepsilon\int_{t_k}^t a(s)\beta(s)\Vert\nabla f(X(s)+\beta(s)V(s))\Vert^2ds\\
    &\phantom{=}+3\Vert N_t-N_{t_k}\Vert^2.
\end{align*}
Now let $k''\in\N$ be such that for every $k>k''$, $$\int_{t_{k}}^{+\infty}\Vert V(s)\Vert^2ds<\frac{1}{9\eta^2}, \int_{t_{k}}^{+\infty} a(s)\beta(s)\Vert\nabla f(X(s)+\beta(s)V(s))\Vert^2ds< \frac{\eta}{9}, \sup_{t>t_k}\Vert N_t-N_{t_k}\Vert^2<\frac{\varepsilon}{9}.$$

Then, we have that $$\norm{V(t)-V(t_k)}^2\leq \varepsilon\leq\frac{\delta^2}{4},\forall t\in[t_k,t_k+\varepsilon], k>\max\{k',k''\}.$$ For such $t$, we bound using the triangular inequality and obtain $$\Vert V(t)\Vert\geq \Vert V(t_k)\Vert-\Vert V(t)-V(t_k)\Vert>\frac{\delta}{2}.$$ 
Now we consider $$\int_{t_0}^{+\infty}\Vert V(s)\Vert^2ds\geq \sum_{k>\max\{k',k''\}}\int_{t_k}^{t_k+\varepsilon}\Vert V(s)\Vert^2 ds\geq \sum_{k>\max\{k',k''\}} \frac{\varepsilon\delta^2}{4}=+\infty.$$
Which is a contradiction, then we conclude that $\liminf_{t\rightarrow +\infty}\Vert V(t)\Vert=\limsup_{t\rightarrow +\infty}\Vert V(t)\Vert=0$, a.s..\smallskip

To prove the second part of \ref{v0}, i.e. that $\lim_{t\rightarrow +\infty}\Vert\nabla f(X(t)+\beta(t)V(t))\Vert=0$, we recall that there exists $\eta>0,\hat{t}>t_0$ such that $\eta\leq a(t)\beta(t)$, then there exists $\Omega_y\in\calF$ such that $\Pro(\Omega_y)=1$ and $$\int_{t_0}^{+\infty} \Vert\nabla f(X(\omega,s)+\beta(s)V(\omega,s))\Vert^2ds<+\infty \quad \forall\omega\in\Omega_y$$
So we have that $$\liminf_{t\rightarrow +\infty} \Vert\nabla f(X(\omega,t)+\beta(t)V(\omega,t))\Vert=0, \quad \forall\omega\in\Omega_y.$$ Moreover, if we suppose that $$\limsup_{t\rightarrow +\infty} \Vert\nabla f(X(\omega,t)+\beta(t)V(\omega,t))\Vert>0,\forall\omega\in\Omega_y,$$  by \cite[Lemma A.3]{mio2}, there exists $\delta>0,(t_k)_{k\in\N}\subset [t_0,+\infty[$ such that $\lim_{k\rightarrow +\infty} t_k=+\infty$, 
\[ 
\norm{\nabla f(X(\omega, t_k)+\beta(t_k)V(\omega,t_k))}>\delta\quad \forall\omega\in\Omega_y \qandq t_{k+1}-t_k>1, \quad \forall k\in\N.
\]

Recall that by \eqref{H2}, there exists $\beta_0$ such that $\beta(t)\leq \beta_0$. Let $\varepsilon\in \left(0,\min\{1,\frac{\delta^2}{4L^2}\}\right)$ arbitrary and consider $Y(t)=X(t)+\beta(t)V(t)$, let also $k\in\N$ arbitrary and $t\in[t_k,t_k+\varepsilon]$. Then, using Lemma \ref{axby} and Jensen's inequality we can bound as follows:
\begin{align*}
    \Vert Y(t)-Y(t_k)\Vert^2&\leq 2\Vert X(t)-X(t_k)\Vert^2+2\Vert\beta(t)V(t)-\beta(t_k)V(t_k)\Vert^2 \\
    &\leq 2\Big\Vert \int_{t_k}^t V(s)ds\Big\Vert^2+ 2\left(\beta_0\Vert V(t)-V(t_k)\Vert+|\beta(t)-\beta(t_k)|\max\{\Vert V(t)\Vert,\Vert V(t_k)\Vert\}\right)^2\\
    &\leq 2(t-t_k)\int_{t_k}^{+\infty}\Vert V(s)\Vert^2ds\\
    &\phantom{=}+2\left(\beta_0\Vert V(t)-V(t_k)\Vert+|\beta(t)-\beta(t_k)|\max\{\Vert V(t)\Vert,\Vert V(t_k)\Vert\}\right)^2\\
    &\leq 2(t-t_k)\int_{t_k}^{+\infty}\Vert V(s)\Vert^2ds\\
    &\phantom{=}+4\beta_0^2\Vert V(t)-V(t_k)\Vert^2+4|\beta(t)-\beta(t_k)|^2\max\{\Vert V(t)\Vert,\Vert V(t_k)\Vert\}^2.
\end{align*}
By the previous point, we have that there exists $\Omega_v\in\calF$ such that $\Pro(\Omega_v)=1$ such that $$\int_{t_0}^{+\infty}\Vert V(\omega,s)\Vert^2ds<+\infty\quad \forall\omega\in\Omega_v,$$ and $k'\in \N$ such that for every $k>k'$, for all $t\in [t_k,t_k+\varepsilon]$: $$\int_{t_k}^{+\infty}\Vert V(\omega,s)\Vert^2ds<\frac{1}{6},\quad \max\{\Vert V(\omega,t)\Vert,\Vert V(\omega,t_k)\Vert\}<1, \quad \Vert V(\omega,t)-V(\omega,t_k)\Vert^2\leq \frac{\varepsilon}{12\beta_0^2}.$$
We consider an arbitrary $\omega_0\in \Omega_y\cap\Omega_v$ 
 $(\Pro(\Omega_y\cap\Omega_v)=1)$, and we let us use the abuse of notation \linebreak
$X(t)=X(\omega_0,t),V(t)=V(\omega_0,t)$, and $Y(t)=Y(\omega_0,t)$ for the rest of this proof.\\
On the other hand, $\beta$ is continuous, so there exists $\tilde{\delta}>0$ such that, if $|t-t_k|<\tilde{\delta}$, then $|\beta(t)-\beta(t_k)|<\frac{\sqrt{\varepsilon}}{2\sqrt{3}}$.\\
Therefore, letting $\varepsilon'=\min\{\varepsilon,\tilde{\delta}\}$, we have that $$\Vert \nabla f(Y(t))-\nabla f(Y(t_k))\Vert^2\leq L^2\Vert Y(t)-Y(t_k)\Vert^2\leq L^2\varepsilon\leq\frac{\delta^2}{4}, \forall k>k',\forall t\in [t_k,t_k+\varepsilon'].$$
Then, we obtain $$\Vert \nabla f(Y(t))\Vert\geq \Vert \nabla f(Y(t_k))\Vert-\Vert \nabla f(Y(t))-\nabla f(Y(t_k))\Vert\geq\frac{\delta}{2}, \forall k>k',\forall t\in [t_k,t_k+\varepsilon'].$$
This implies that $$\int_{t_0}^{+\infty}\Vert \nabla f(Y(s))\Vert^2 ds\geq \sum_{k>k'}\int_{t_k}^{t_k+\varepsilon'}\Vert\nabla f(Y(s))\Vert^2 ds\geq\sum_{k>k'}\int_{t_k}^{t_k+\varepsilon'}\frac{\delta^2}{4}=\sum_{k>k'}\frac{\varepsilon'\delta^2}{4}=+\infty.$$
Which is a contradiction, then we conclude that $$\liminf_{t\rightarrow +\infty}\Vert \nabla f(X(t)+\beta(t)V(t))\Vert=\limsup_{t\rightarrow +\infty}\Vert \nabla f(X(t)+\beta(t)V(t))\Vert=0, \quad a.s..$$
To prove the last part of \ref{v0}, we consider that $\beta(t)\leq\beta_0$, then \begin{align*}
\Vert\nabla f(X(t))\Vert&\leq\Vert\nabla f(X(t)+\beta(t)V(t))\Vert+\Vert\nabla f(X(t))-\nabla f(X(t)+\beta(t)V(t))\Vert\\
&\leq\Vert\nabla f(X(t)+\beta(t)V(t))\Vert+L\beta_0\Vert V(t)\Vert.
\end{align*}

With this bound, we can conclude that $\lim_{t\rightarrow +\infty}\Vert\nabla f(X(t))\Vert=0$ a.s.. 
\end{proof}

The following proposition states abstract bounds in expectation of \eqref{ISIHD-S}.
\begin{proposition}\label{abcdexpcomplete}
Consider the setting of Proposition \ref{abcdcomplete}, then we have that:
\begin{enumerate}[label=(\roman*)]
    \item \label{expec} 
    $\displaystyle\mathbb{E}(f(X(t)+\beta(t)V(t))-\min f)= \mathcal{O}\left(\frac{1}{a(t)}\right).$
\end{enumerate}
Moreover, if there exists $D>0, \tilde{t}>t_0$ such that $d(t)\geq D$ for $t>\tilde{t}$, then :
\begin{enumerate}[label=(\roman*),resume]
    \item \label{expbound}
    $\sup_{t>t_0}\mathbb{E}(\Vert X(t)-x^{\star}\Vert^2)<+\infty$.
    \item $\displaystyle\mathbb{E}(\Vert V(t)\Vert^2)= \mathcal{O}\left(\frac{1+b^2(t)}{c^2(t)}\right)$.
    \item  \label{last} $\displaystyle \mathbb{E}\left(f(X(t))-\min f\right)=\mathcal{O}\left(\max\Bigg\{\frac{1}{a(t)},\frac{\beta(t)\sqrt{1+b^2(t)}}{\sqrt{a(t)}c(t)},\frac{\beta^2(t)\left(1+b^2(t)\right)}{c^2(t)}\Bigg\}\right).$

\end{enumerate}
\end{proposition}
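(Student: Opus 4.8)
The plan is to integrate out the randomness in the Itô--Lyapunov inequality \eqref{eqhessianpert} obtained in the proof of Proposition~\ref{abcdcomplete}, turning it into a single uniform-in-time moment bound on the energy. First I would recall that the stochastic integral $M_t$ appearing in \eqref{eqhessianpert} is, as established there, a square-integrable continuous martingale, so $\mathbb{E}(M_t)=0$. Next, the sign conditions in \eqref{systemabcd} (together with $f-\min f\geq 0$ and convexity of $f$) ensure that the four integral terms subtracted on the right-hand side of \eqref{eqhessianpert} are non-negative and may be discarded for an upper bound, while by \eqref{H2} one has $La(s)\beta^2(s)+c^2(s)\leq (Lc_1^2+1)\,m(s)$ with $m(s)=\max\{1,a(s),c^2(s)\}$. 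Since $X_0,V_0\in\Lp^{\nu}(\Omega;\H)$ with $\nu\geq 2$ and $f\in C_L^{1,1}(\H)$, Lemma~\ref{descent} applied at $x^\star$ gives $\mathbb{E}(\mathcal{E}(t_0,X_0,V_0))<+\infty$, while the hypothesis $m\sigma_\infty^2\in\Lp^1([t_0,+\infty[)$ makes the noise integral finite. Taking expectations in \eqref{eqhessianpert} then yields the cornerstone estimate
\[
\sup_{t\geq t_0}\mathbb{E}\big(\mathcal{E}(t,X(t),V(t))\big)\leq \mathbb{E}\big(\mathcal{E}(t_0,X_0,V_0)\big)+\tfrac12 (Lc_1^2+1)\int_{t_0}^{+\infty} m(s)\sigma_\infty^2(s)\,ds=:C<+\infty.
\]

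All four claims then follow by bounding $\mathcal{E}$ from below by judiciously chosen non-negative summands of \eqref{liapunov}. For \ref{expec}, dropping the two (non-negative) quadratic terms leaves $\mathcal{E}(t,x,v)\geq a(t)(f(x+\beta(t)v)-\min f)$, whence $\mathbb{E}(f(X(t)+\beta(t)V(t))-\min f)\leq C/a(t)=\mathcal{O}(1/a(t))$. Assuming now $d(t)\geq D$ on $]\tilde t,+\infty[$, completing the square in \eqref{liapunov} gives $\mathcal{E}\geq \tfrac{d}{2}\|x-x^\star\|^2$, so that $\tfrac D2\mathbb{E}(\|X(t)-x^\star\|^2)\leq C$ for $t>\tilde t$; combined with the finiteness of the moments on the compact interval $[t_0,\tilde t]$ (from $(X,V)\in S_{\H\times\H}^{\nu}[t_0]$), this proves \ref{expbound}. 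For the velocity bound I would instead use $\mathcal{E}\geq\tfrac12\|b(X-x^\star)+cV\|^2$ together with the elementary inequality $c^2\|V\|^2\leq 2\|b(X-x^\star)+cV\|^2+2b^2\|X-x^\star\|^2$ and \ref{expbound}, giving $c^2(t)\mathbb{E}(\|V(t)\|^2)\leq 4C+\mathcal{O}(b^2(t))$, i.e. $\mathbb{E}(\|V(t)\|^2)=\mathcal{O}\big((1+b^2(t))/c^2(t)\big)$.

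For the last claim \ref{last}, the idea is to transfer the rate from $Y=X+\beta V$ back to $X$ via the descent lemma. Since $X-Y=-\beta V$, Lemma~\ref{descent} yields
\[
f(X(t))-\min f\leq \big(f(Y(t))-\min f\big)-\beta(t)\langle\nabla f(Y(t)),V(t)\rangle+\tfrac{L\beta^2(t)}{2}\|V(t)\|^2 .
\]
I would then bound the cross term by $|\langle\nabla f(Y),V\rangle|\leq \|\nabla f(Y)\|\,\|V\|\leq \sqrt{2L}\,\sqrt{f(Y)-\min f}\,\|V\|$ using Corollary~\ref{2L}, take expectations, and apply Cauchy--Schwarz in $\Lp^2(\Omega)$ to get $\mathbb{E}(\sqrt{f(Y)-\min f}\,\|V\|)\leq \sqrt{\mathbb{E}(f(Y)-\min f)}\,\sqrt{\mathbb{E}(\|V\|^2)}$. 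Substituting $\mathbb{E}(f(Y)-\min f)=\mathcal{O}(1/a)$ from \ref{expec} and $\mathbb{E}(\|V\|^2)=\mathcal{O}((1+b^2)/c^2)$ produces the three competing terms $\mathcal{O}(1/a)$, $\mathcal{O}\big(\beta\sqrt{1+b^2}/(\sqrt a\,c)\big)$ and $\mathcal{O}\big(\beta^2(1+b^2)/c^2\big)$, whose maximum is precisely the announced rate.

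The main obstacle I anticipate lies in the first paragraph: justifying rigorously that $\mathbb{E}(M_t)=0$ (that the stochastic integral is a genuine martingale, not merely a local one) and that expectation may legitimately be interchanged with the Itô inequality, which rests on the square-integrability furnished by $(X,V)\in S_{\H\times\H}^{\nu}[t_0]$, the uniform bound on $\sigma$ from \eqref{H}, and the growth of $a,b,c,\beta$. Once the uniform bound $C$ is secured, steps \ref{expec}--\ref{last} reduce to elementary manipulations. A secondary subtlety is that \ref{expec} tacitly uses $d\geq 0$ for the positive semidefiniteness guaranteeing $\mathcal{E}\geq 0$ (automatic in the applications of Corollaries~\ref{corabcdd} and \ref{cor1}, where $d=b(1-b)>0$ and $d=b(\alpha-1-b)>0$), and that all $\mathcal{O}$-estimates are asymptotic in $t$, the bounded behaviour on $[t_0,\tilde t]$ being absorbed into the constants.
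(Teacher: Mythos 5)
Your proposal is correct and follows essentially the same route as the paper: take expectations in \eqref{eqhessianpert}, use that $M_t$ is a genuine square-integrable martingale and drop the non-negative dissipation terms to obtain a uniform-in-time bound on $\mathbb{E}(\mathcal{E}(t,X(t),V(t)))$, then read off each claim from the corresponding non-negative summand of \eqref{liapunov}, finishing \ref{last} via the descent lemma, Corollary~\ref{2L} and Cauchy--Schwarz exactly as the paper does. The subtleties you flag (martingale property of $M_t$, non-negativity of $d$ for the lower bound in \ref{expec}) are real but are handled, respectively, in Proposition~\ref{abcdcomplete} and in the concrete instantiations of \eqref{systemabcd}.
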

\begin{proof}
    To prove this proposition we are going to take expectation in \eqref{eqhessianpert}. 
First, we are going to bound the negative terms by $0$, denoting $E_0\eqdef \mathcal{E}(t_0)+\max\{1,L\}\int_{t_0}^{+\infty}(a(s)\beta^2(s)+c^2(s))\sigma_{\infty}^2(s)ds$, we obtain that $$\mathbb{E}(\mathcal{E}(t,X(t),V(t)))\leq E_0.$$ This implies that \begin{itemize}
    \item $\mathbb{E}(f(X(t)+\beta(t)V(t))-\min f)\leq\frac{E_0}{a(t)}$.
    \item $\mathbb{E}(\Vert b(t)(X(t)-x^{\star})+c(t)V(t)\Vert^2)\leq 2E_0$.
    \end{itemize}
    If there exists $D>0, \tilde{t}>t_0$ such that $d(t)\geq D$ for $t>\tilde{t}$, then for $t>\tilde{t}$:
    \begin{itemize}
    \item $\mathbb{E}(\Vert X(t)-x^{\star}\Vert^2)\leq\frac{2E_0}{D}$.
    \item And also, \begin{align*}
    \mathbb{E}(\Vert V(t)\Vert^2)&\leq \frac{2}{c^2(t)}[\mathbb{E}(\Vert b(t)(X(t)-x^{\star})+c(t)V(t)\Vert^2)+b^2(t)\mathbb{E}(\Vert X(t)-x^{\star}\Vert^2)]\\
    &\leq \frac{2}{c^2(t)}\pa{2E_0+\frac{2E_0b^2(t)}{D}}=\frac{4E_0}{c^2(t)}\pa{1+\frac{b^2(t)}{D}}.
    \end{align*}
    \item We bound the following term using the Descent Lemma \begin{align*}
        \mathbb{E}(f(X(t))-f(X(t)+\beta(t)V(t)))&\leq \beta(t)\sqrt{\mathbb{E}(\Vert\nabla f(X(t)+\beta(t)V(t))\Vert^2)}\sqrt{\mathbb{E}(\Vert V(t)\Vert^2)}\\
        &\phantom{=}+\frac{L}{2}\beta^2(t)\mathbb{E}(\Vert V(t)\Vert^2).
    \end{align*}
    Using Corollary \ref{2L}, we have 
    \begin{align*}
        \mathbb{E}(f(X(t))-f(X(t)+\beta(t)V(t)))&\leq \beta(t)\sqrt{2L\mathbb{E}( f(X(t)+\beta(t)V(t))-\min f)}\sqrt{\mathbb{E}(\Vert V(t)\Vert^2)}\\
        &+\frac{L}{2}\beta^2(t)\mathbb{E}(\Vert V(t)\Vert^2)\\
        &\leq 2E_0\sqrt{2L}\frac{\beta(t)}{\sqrt{a(t)}}\frac{\sqrt{1+\frac{b^2(t)}{D}}}{c(t)}+2LE_0\frac{\beta^2(t)\pa{1+\frac{b^2(t)}{D}}}{c^2(t)}\\
        &=\mathcal{O}\pa{\max\Bigg\{\frac{\beta(t)}{\sqrt{a(t)}}\frac{\sqrt{1+b^2(t)}}{c(t)},\frac{\beta^2(t)\pa{1+b^2(t)}}{c^2(t)}\Bigg\}}
    \end{align*}
    Then, we notice that \begin{align*}
       \mathbb{E}( f(X(t))-\min f)&=\mathbb{E}[f(X(t))-f(X(t)+\beta(t)V(t))]+\mathbb{E}[f(X(t)+\beta(t)V(t))-\min f]\\
        &=\mathcal{O}\pa{\max\Bigg\{\frac{\beta(t)}{\sqrt{a(t)}}\frac{\sqrt{1+b^2(t)}}{c(t)},\frac{\beta^2(t)\pa{1+b^2(t)}}{c^2(t)},\frac{1}{a(t)}\Bigg\}}.
    \end{align*}
    
\end{itemize} 
\end{proof}

\bibliographystyle{unsrt}
\smaller
\bibliography{citas}

\end{document}